\newtheorem{theorem}{Theorem}[section]
\newtheorem{corollary}[theorem]{Corollary}
\newtheorem{lemma}[theorem]{Lemma}
\newtheorem{proposition}[theorem]{Proposition}
\newtheorem{definition}[theorem]{Definition}
\newtheorem{remark}[theorem]{Remark}
\newtheorem*{theorem*}{Theorem}
\newtheorem*{lemma*}{Lemma}
\newtheorem*{remark*}{Remark}
\newtheorem*{definition*}{Definition}
\newtheorem*{proposition*}{Proposition}
\newtheorem*{corollary*}{Corollary}
\numberwithin{equation}{section}
\newcommand{\real}{\mathbb{R}}
\let\ced=\c         
\def\e{\varepsilon}        
\def\qed{\,\unskip\kern 6pt \penalty 500
\raise -2pt\hbox{\vrule \vbox to8pt{\hrule width 6pt
\vfill\hrule}\vrule}\par}
\definecolor{darkblue}{rgb}{0.05, .05, .65}
\definecolor{darkgreen}{rgb}{0.1, .65, .1}
\definecolor{darkred}{rgb}{0.8,0,0}
\newcommand{\beqn}{\begin{equation}}
\newcommand{\eeqn}{\end{equation}}
\newcommand{\bear}{\begin{eqnarray}}
\newcommand{\eear}{\end{eqnarray}}
\newcommand{\bean}{\begin{eqnarray*}}
\newcommand{\eean}{\end{eqnarray*}}
\begin{document}

\title{\huge \bf Self-similar extinction for a diffusive Hamilton-Jacobi equation with critical absorption}

\author{
\Large Razvan Gabriel Iagar\,\footnote{Instituto de Ciencias
Matem\'aticas (ICMAT), Nicol\'as Cabrera 13-15, Campus de
Cantoblanco, 28049, Madrid, Spain, \textit{e-mail:}
razvan.iagar@icmat.es},\footnote{Institute of Mathematics of the
Romanian Academy, P.O. Box 1-764, RO-014700, Bucharest, Romania.}
\\[4pt] \Large Philippe Lauren\ced{c}ot\,\footnote{Institut de
Math\'ematiques de Toulouse, CNRS UMR~5219, Universit\'e de
Toulouse, F--31062 Toulouse Cedex 9, France. \textit{e-mail:}
Philippe.Laurencot@math.univ-toulouse.fr}\\ [4pt] }
\date{\today}
\maketitle

\begin{abstract}
The behavior near the extinction time is identified for non-negative solutions to the diffusive Hamilton-Jacobi equation with critical gradient absorption
$$
\partial_tu-\Delta_p u+|\nabla u|^{p-1}=0 \quad \hbox{in} \
(0,\infty)\times\real^N\ ,
$$
and fast diffusion $2N/(N+1)<p<2$. Given a non-negative and radially symmetric initial condition with a non-increasing profile which decays sufficiently fast as $|x|\to\infty$, it is shown that the corresponding solution $u$ to the above equation approaches a uniquely determined \emph{separate variable solution} of the form
$$
U(t,x)=(T_e-t)^{1/(2-p)}f_*(|x|), \quad (t,x)\in (0,T_e)\times \real^N\ ,
$$
as $t\to T_e$, where $T_e$ denotes the finite extinction time of $u$. A cornerstone of the convergence proof is an underlying variational structure of the equation. Also, the selected profile $f_*$ is the unique non-negative solution to a second order ordinary differential equation which decays exponentially at infinity. A complete classification of solutions to this equation is provided, thereby describing all separate
variable solutions of the original equation. One important difficulty in the uniqueness proof is
that no monotonicity argument seems to be available and it is overcome by the construction of an appropriate \emph{Pohozaev functional}.
\end{abstract}


\vspace{1.0 cm}

\noindent {\bf AMS Subject Classification:} 35B40 - 35K67 - 34C11 - 34B40 - 35B33 - 35K92.

\medskip

\noindent {\bf Keywords:} finite time extinction, singular diffusion, separate variable solutions, gradient absorption.

\section{Introduction}

We perform a detailed analysis of the phenomenon
of finite time extinction for the diffusive
Hamilton-Jacobi equation with critical absorption and singular diffusion:
\begin{equation}\label{eq1}
\partial_tu-\Delta_pu+|\nabla u|^{p-1}=0, \qquad  (t,x)\in (0,\infty)\times\real^N,
\end{equation}
supplemented with the initial condition
\begin{equation}\label{eq2}
u(0,x)=u_0(x), \quad x\in\real^N.
\end{equation}
Here, as usual,
$$
\Delta_p u := {\rm div}(|\nabla u|^{p-2}\nabla u)\ ,
$$
where the exponent $p$ is assumed to belong to the fast diffusion range
\begin{equation*}
p_c:=\frac{2N}{N+1}<p<2\ ,
\end{equation*}
and the initial condition $u_0$ is assumed throughout the paper to satisfy
\begin{equation}\label{hypIC}
u_0\in W^{1,\infty}(\real^N)\ , \qquad u_0\ge 0\ , \qquad u_0\not\equiv 0\ .
\end{equation}
The term \emph{critical} refers here to the peculiar choice of the
exponent $p-1$ for the gradient absorption, for which the
homogeneity of the absorption matches that of the diffusion. Observe
that $p-1\in (0,1)$ as $p\in (p_c,2)$.

Equation~\eqref{eq1} is actually a particular case of the diffusive Hamilton-Jacobi equation with gradient absorption
\begin{equation}\label{eqgen}
\partial_tu-\Delta_pu+|\nabla u|^{q}=0, \qquad  (t,x)\in (0,\infty)\times\real^N,
\end{equation}
and it is already known that extinction in finite time
takes place for non-negative solutions to \eqref{eqgen} with initial data decaying
sufficiently fast as $|x|\to\infty$, provided $q\in (0,p/2)$ and $p\in (p_c,2]$ \cite{BLS01, BLSS02, Gi05, IaLa12, ILSxx}. Note that this range includes the values of $p$ and $q=p-1$ considered in this paper. Let us recall that, by extinction in finite time we mean that there exists $T_e\in(0,\infty)$ such that $u(t,x)=0$ for any $x\in\real^N$ and $t\ge T_e$, but $\|u(t)\|_{\infty}>0$ for all $t\in (0,T_e)$. The time $T_e$ is usually referred to as \emph{the extinction time} of the solution $u$.

The main feature of \eqref{eqgen} is the \emph{competition} between the two terms involving the space variable, the diffusion $-\Delta_p u$ and the absorption $|\nabla u|^q$, and their influence on the dynamics. As the properties of the diffusion equation and of the Hamilton-Jacobi equation (without diffusion) are very different, it is an interesting task to study the effects of their merging in \eqref{eqgen}, leading to different types of behavior in dependence on the relation between the exponents $p$ and $q$.

The development of the mathematical theory for \eqref{eqgen} begun with the semilinear case $p=2$, where techniques coming from linear theory (such as representation formulas via convolution with the heat kernel) were applied in order to get estimates on
the solutions. The qualitative theory together with the large time behavior are now well understood. For exponents $q>1$, the problem
has been investigated in a series of works \cite{ATU04, BSW02, BKaL04, BL99, BVD13, BGK04, GL07, GGK03, Gi05}. In this range, the
diffusion has an important influence on the evolution: either completely dominating, when $q>q_*:=(N+2)/(N+1)$, leading to
asymptotic simplification, or having a similar effect to the Hamilton-Jacobi term for $q\in(1,q_*]$, leading to a resonant, logarithmic-type behavior for $q=q_*$ \cite{GL07}, or to a behavior driven by \emph{very singular solutions} for $q\in (1,q_*)$
\cite{BKaL04}. For exponents $q\in (0,1)$, a singular phenomenon, extinction in finite time, shows up \cite{BLS01, BLSS02, Gi05}, and a deeper study of
the extinction mechanism has been performed recently in \cite{ILSxx}. It is shown that, in this range $q\in (0,1)$, rather unusual phenomena such as \emph{instantaneous shrinking} (that is, the
support of $u(t)$ becomes compact for any $t>0$, even if $u_0(x)>0$ for any $x\in\real^N$) and \emph{single point extinction} take
place. However, a precise description of the behavior of solutions near the extinction time is still missing. Finally, the critical case $q=1$ seems to be currently out of reach, though optimal decay estimates as
$t\to\infty$ are established in \cite{BRV97}.

A natural extension of the theory is to consider nonlinear
generalizations of the Laplacian operator in the diffusion term, and
the quasilinear operator $\Delta_p$ is one of the obvious
candidates. In this case, the study proved to be more involved and
challenging, as the classical linear techniques do not work anymore.
Due to this difficulty, the qualitative theory for \eqref{eqgen}
with $p\neq 2$ has been understood quite recently. Our main interest
focuses on the fast/singular diffusion case $p\in (p_c,2)$, for
which the qualitative theory has been developed starting from
\cite{IaLa12}, where all exponents $q>0$ are considered. Two
critical exponents are identified in \cite{IaLa12}:
$q=q_*:=p-N/(N+1)$ and $q=p/2$. These critical values limit ranges
of different behaviors: diffusion dominates for $q>q_*$ leading to
asymptotic simplification, while there is a balance between
diffusion and absorption for $q\in [p/2,q_*]$. At last but not
least, finite time extinction occurs for $0<q<p/2$ as soon as the
initial condition $u_0$ decays sufficiently fast as $|x|\to\infty$.

Still the behavior of non-negative solutions to \eqref{eqgen} is not uniform within this range of values of $q$. Indeed, a by-product of the analysis in \cite{IaLa12, ILSxx} reveals that there is another critical exponent in $(0,p/2)$, namely $q=p-1$. In fact, though the driving mechanism of extinction at a global scale is the absorption term in \eqref{eqgen} for all $q\in (0,p/2)$, a fundamental difference in the occurrence of finite time extinction shows up within this range. As shown in \cite{ILSxx}, when $0<q<p-1$, a special phenomenon known as
\emph{instantaneous shrinking} takes place: for non-negative initial data decaying sufficiently fast at infinity, the support of $u(t)$ is compact for any $t>0$, even if $u_0(x)>0$ for any $x\in\real^N$. For a suitable class of radially symmetric initial conditions with a non-increasing profile, this property is enhanced by the dynamics and \emph{single point extinction} takes place: the support of $u(t)$ shrinks to the singleton $\{0\}$ as $t\to T_e$.

This is in \emph{sharp contrast} with the range $p-1\leq q<p/2$,
where, as shown in \cite[Proposition~4.4]{ILSxx}, \emph{simultaneous
extinction} occurs: that is, $u(t,x)>0$ for any $t\in (0,T_e)$ and
$x\in\real^N$. This positivity property up to the extinction time is
clearly due to the diffusion term which is thus not completely
negligible in this range: some kind of balance between the two terms
is expected, at least for initial data $u_0$ rapidly decaying as
$|x|\to\infty$. In view of this analysis, the exponent $q=p-1$ to
which we devote this work acts as an interface between the two
different extinction mechanisms and describing it fully adds up to
the general understanding of the dynamics of \eqref{eqgen}.

Let us point out that we restrict our analysis to the range $p>p_c$ in which there is a competition between the diffusion term aiming at positivity in the whole space $\real^N$ and the gradient absorption term being the driving mechanism of extinction. We do not consider here the limiting case $p=p_c$ which is more involved. We also leave aside the subcritical range $p\in (1,p_c)$ as finite time extinction also occurs for the fast diffusion equation without the gradient absorption term. We rather expect a different kind of competition in that range, namely between two extinction mechanisms stemming from both diffusion and absorption.

We finally mention that the phenomenon of extinction in finite time and the dynamics close to the extinction time have been an object of study also for other models exhibiting competition between diffusion and absorption, see \cite{Be01, BHV01, BeSh07, Ka87, FV, HV92} for instance. A well-studied example is the fast diffusion equation with zero order absorption
\begin{equation}\label{PMEa}
\partial_t u-\Delta u^m+u^q=0, \qquad (t,x)\in (0,\infty)\times \real^N\ ,
\end{equation}
in the range of parameters $m_c:=(N-2)_+/N<m< 1$ and $0<q<1$, see for example \cite{Ka87, FV, FGV, dPSa02} and the references therein. For
this equation, a threshold between single point extinction and simultaneous extinction also appears but at the exponent $q=m$. Further
comments on similarities and differences between \eqref{eq1} and \eqref{PMEa} with $0<m=q<1$ will be provided in the next section.

\section{Main results}

We are now ready to present the main contributions of this paper.
First, as already pointed out, a consequence of the specific choice
of the exponent $p-1$ for the gradient absorption in \eqref{eq1} is
the homogeneity of $-\Delta_p u + |\nabla u|^{p-1}$, which allows us
to look for \emph{separate variable solutions}, that is, particular
solutions $U$ to \eqref{eq1} having the following form:
\begin{equation}\label{sepvar}
U(t,x)=T(t)f(|x|), \qquad (t,x)\in (0,\infty)\times\real^N.
\end{equation}
It is readily seen that, by direct calculation, we have
$$
T(t)=[(2-p)(T_0-t)_+]^{1/(2-p)}, \qquad t\in (0,\infty),
$$
for some $T_0>0$, and $f$ is a non-negative solution to the following second order ordinary differential equation:
\begin{equation}\label{SSODE}
\left(|f'|^{p-2}f'\right)'(r)+\frac{N-1}{r}\left(|f'|^{p-2}f'\right)(r)+f(r)-|f'(r)|^{p-1}=0, \qquad r\in (0,\infty)\ ,
\end{equation}
with the usual notation $r=|x|$ that will be used
throughout the paper. The expected regularity of $U$ entails that $f'(0)=0$ while the value of $f(0)$ is still unknown at this point. We are thus led to study the following question: for which initial conditions
\begin{equation}\label{ICa}
f(0)=a>0, \quad f'(0)=0,
\end{equation}
is the solution $f=f(\cdot;a)$ to \eqref{SSODE}-\eqref{ICa} non-negative for all $r\ge 0$? The answer is given by our first result.

\begin{theorem}[Classification of profiles]\label{th.uniq}
Given $a>0$ there is a unique solution $f(\cdot;a)\in C^1([0,\infty))$ to \eqref{SSODE}-\eqref{ICa} such that $(|f'|^{p-2}f')(\cdot;a)\in C^1([0,\infty))$.

Furthermore, there exists a unique $a_*>0$ such that $f(r;a_*)>0$ for all $r\ge 0$ and there is $c_*>0$ such that
\begin{equation}\label{decay}
f(r;a_*)\sim c_* r^{-(N-1)/(p-1)}e^{-r/(p-1)} \qquad {\rm as} \ r\to\infty.
\end{equation}
Moreover, a complete classification of the behavior of the solutions $f(\cdot;a)$ to \eqref{SSODE}-\eqref{ICa} is available:

\noindent (a) For $a\in(0,a_*)$, there holds $f(r;a)>0$ for all $r\ge 0$ and
$$
f(r;a)\sim \left( \frac{2-p}{p-1}
r \right)^{-(p-1)/(2-p)} \qquad {\rm as} \ r\to\infty.
$$

\noindent (b) For $a\in(a_*,\infty)$, there exists
$R(a)\in(0,\infty)$ such that $f(r;a)>0$ for $r\in(0,R(a))$, $f(R(a);a)=0$, and $f'(R(a);a)<0$. In particular, $f(\cdot;a)$ changes sign in $(0,\infty)$.
\end{theorem}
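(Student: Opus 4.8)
The plan is to analyze the ODE \eqref{SSODE}-\eqref{ICa} as a shooting problem in the parameter $a>0$, separating the analysis into the local theory near $r=0$, the global dichotomy for generic $a$, and the identification of the threshold value $a_*$. First I would establish local existence, uniqueness and $C^1$-dependence on $a$ for the initial value problem: rewrite \eqref{SSODE} in the form $(r^{N-1}|f'|^{p-2}f')' = r^{N-1}(|f'|^{p-1}-f)$, integrate once to get $|f'|^{p-2}f'(r) = r^{1-N}\int_0^r s^{N-1}(|f'(s)|^{p-1}-f(s))\,\rd s$, and invert the monotone map $\xi\mapsto|\xi|^{p-2}\xi$ (whose inverse $\eta\mapsto|\eta|^{(2-p)/(p-1)}\eta$ is H\"older continuous since $1/(p-1)>1$) to obtain a fixed-point formulation for $f'$; a contraction argument on a small interval $[0,\delta]$ gives the unique $C^1$ solution with $(|f'|^{p-2}f')\in C^1$, and near $r=0$ one reads off $f(r;a)=a-\frac{a^{(2-p)/(p-1)}}{\text{(const)}}r^{p/(p-1)}+\dots$ by a Taylor-type expansion, confirming $f'(0)=0$ and that $f$ is initially decreasing.

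Next I would set up the shooting argument. Define the maximal interval of positivity and let
$$
A_- := \{a>0 : f(r;a)>0 \text{ for all } r\ge 0\}, \qquad A_+ := \{a>0 : f(\cdot;a) \text{ vanishes at some finite } R(a)\}.
$$
The bulk of the work is to show: (i) $A_-$ and $A_+$ are both nonempty and open; (ii) each is an interval (i.e. $A_-=(0,a_*)$ and $A_+=(a_*,\infty)$ for some $a_*$); (iii) on $A_+$ one has $f'(R(a);a)<0$ (strict, so the solution genuinely changes sign, which one gets because $f=0$ and $f'=0$ simultaneously would force $f\equiv0$ by the uniqueness part); and (iv) at the threshold $a=a_*$ the solution stays positive with the sharp exponential decay \eqref{decay}. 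Nonemptiness and the large-$r$ asymptotics for $a\in A_-$ are driven by the observation that for a slowly decaying positive solution the term $f$ balances $|f'|^{p-1}$ (an algebraic, non-exponential balance), which formally yields the candidate profile $f(r)\sim\big(\tfrac{2-p}{p-1}r\big)^{-(p-1)/(2-p)}$; this is made rigorous by constructing ordered sub/supersolutions of that form and invoking a comparison principle for \eqref{SSODE}. For small $a$ a perturbative/comparison argument against such a supersolution shows $f$ cannot vanish; for large $a$ an energy/concavity argument (using that $f''<0$ while $f$ stays large forces $f'$ to become sufficiently negative) shows $f$ must reach zero in finite $r$.

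The monotonicity in $a$ needed for (ii) is the crux. Here I would use a Sturmian/comparison argument: differentiating \eqref{SSODE}-\eqref{ICa} in $a$, the derivative $g:=\partial_a f(\cdot;a)$ solves a linear second-order ODE with $g(0)=1$, $g'(0)=0$, and one argues that $g$ stays positive on the positivity interval of $f$ (the linearized operator is of the form $(r^{N-1}|f'|^{p-2}g')' = r^{N-1}(\text{(p-1)}|f'|^{p-3}f'g' - g)$, and a careful sign analysis — or a direct comparison of $f(\cdot;a_1)$ and $f(\cdot;a_2)$ showing their graphs cannot cross while both are positive — gives strict ordering $a_1<a_2\Rightarrow f(\cdot;a_1)<f(\cdot;a_2)$ wherever both are positive). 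This ordering immediately makes $A_-$ and $A_+$ into intervals and produces the threshold $a_*:=\sup A_-=\inf A_+$. That $a_*\in A_-$ (the threshold solution itself stays positive) follows from closedness arguments combined with ruling out the algebraic decay at the threshold: a positive solution either decays algebraically (the full open set $A_-$) or, in the borderline case, exponentially; matching with the linearization of \eqref{SSODE} about $f=0$, namely $(|f'|^{p-2}f')'+\tfrac{N-1}{r}(|f'|^{p-2}f')+f\approx 0$ whose decaying solution behaves like $r^{-(N-1)/(p-1)}e^{-r/(p-1)}$, identifies the constant $c_*>0$ and gives \eqref{decay}.

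\emph{Main obstacle.} I expect the hardest point to be the strict monotonicity/ordering in $a$ on the whole positivity interval, because the quasilinear term $|f'|^{p-2}f'$ degenerates wherever $f'=0$ (in particular at $r=0$ and at any interior critical point of $f$), so the linearized equation has singular coefficients and the naive maximum principle does not apply directly; one must handle the degeneracy by a limiting argument (regularizing $|f'|^{p-2}$ away from zero, or working with the invertible flux variable $|f'|^{p-2}f'$ throughout) and track sign changes of $f'$ carefully. A secondary technical nuisance is making the formal asymptotic balances (algebraic for $a<a_*$, exponential for $a=a_*$) rigorous with matching upper and lower bounds, which requires constructing explicit barriers of the stated form and controlling error terms.
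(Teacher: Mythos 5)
There is a genuine gap, and it sits exactly at what you yourself flag as the crux: the strict ordering $a_1<a_2\Rightarrow f(\cdot;a_1)<f(\cdot;a_2)$ on the common positivity interval. The paper states explicitly that for $N\ge 2$ no monotonicity of $f(\cdot;a)$ (or of associated quantities) with respect to $a$ appears to hold, and that this is precisely why the one-dimensional argument of the companion paper does not carry over. Your proposed fixes (linearizing in $a$ and running a Sturmian sign analysis, or regularizing the degenerate coefficient $|f'|^{p-2}$) are the natural first attempts, but the obstruction is not merely the degeneracy at critical points of $f$: the linearized operator contains the zeroth-order terms coming from both $+f$ and $-|f'|^{p-1}$ with competing signs, and no maximum-principle structure is available. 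Without the ordering, your step (ii) (that $A_-$ and $A_+$ are intervals), your identification of $a_*=\sup A_-=\inf A_+$, and your uniqueness of the fast-decaying profile all collapse. Note also that $A_-$ as you define it is $(0,a_*]$, hence not open, and that the real uniqueness issue is subtler than a positivity threshold: \emph{every} $a\in(0,a_*]$ yields a positive decaying ground state, so one must select the unique one with exponential rather than algebraic decay among a continuum of ground states; "ruling out the algebraic decay at the threshold" is asserted but not argued.

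The paper's route avoids monotonicity entirely. It works with the flux variable $g=-|f'|^{p-2}f'$, which solves a second-order equation, and partitions $(0,\infty)$ into three sets: $A$ (finite vanishing), $B$ (exponential decay of $w=\varrho g$ bounded), $C$ ($w$ unbounded, algebraic decay). It constructs a Pohozaev-type functional $J(\cdot;a)$ satisfying $J'=G g^2$ with $G$ independent of $a$ and changing sign exactly once; this yields $J>0$ with $J\to0$ on $B$ and $J\to-\infty$ on $C$, whence $C$ is open (as is $A$), both are nonempty, and $B\neq\emptyset$ by connectedness. Uniqueness of the element of $B$ is then proved by a Wronskian comparison of two putative fast-decaying solutions combined with the sign information on $J$, and the interval structure $C=(0,a_*)$, $A=(a_*,\infty)$ follows from connectedness once $B=\{a_*\}$ --- again with no ordering in $a$. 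Your local well-posedness discussion, the nonemptiness of $A$ for large $a$, the identification of the two decay rates via the balance $f\sim|f'|^{p-1}$ versus the linearization near $f=0$, and the strict negativity $f'(R(a);a)<0$ (which the paper gets directly from the integrated identity $\varrho\,|f'|^{p-2}f'=-\int_0^r\varrho f$ rather than from backward uniqueness at a degenerate point) are all consistent with the paper; it is the selection and uniqueness mechanism that needs to be replaced.
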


Before going forward to state our main convergence (or stabilization near extinction) result, let us make several remarks and comments on the previous result.

\medskip

Let us first point out that Theorem~\ref{th.uniq} indicates that \eqref{SSODE} has several \emph{ground states}, that is, non-negative $C^1$-smooth solutions which decay to zero as $r\to\infty$. Indeed, for any $a\in (0,a_*]$, $f(\cdot;a)$ is a ground state according to the previous definition. This is in sharp contrast with equations of the form
$$
(|w'|^{p-2} w')'(r) + \frac{N-1}{r} (|w'|^{p-2} w')(r) - w(r)^\alpha + w(r)^\beta = 0\ , \qquad r\in (0,\infty)\ ,
$$
with $p\in (1,N)$ and $0<\alpha<\beta$, for which a single ground state exists, see \cite{Kw89, ST00, Ya91a, Ya91b,  ShWa13, ShWa16} and the references therein. The uniqueness statement in Theorem~\ref{th.uniq} thus amounts to select among the ground states of \eqref{SSODE} the one with the fastest decay. This requires to identify the possible decay rates for ground states at infinity and thus a refined analysis is needed.

We next notice that the behavior of $f(r;a_*)$ as $r\to\infty$
depends on the dimension $N$. Roughly speaking, this dependence is
due to the $r$-dependent term $(N-1) (|f'|^{p-2} f')(r)/r$ in
\eqref{SSODE} and the proof of Theorem~\ref{th.uniq} is much more
involved for $N\ge 2$ than for $N=1$. In fact, since the variable
$r$ does not appear explicitly in \eqref{SSODE} when $N=1$, we
introduce in the companion paper \cite{IL16} a transformation which
maps solutions of \eqref{SSODE} onto the solutions to a first order
nonlinear ordinary differential equation. Besides being simpler to
study, solutions of the latter equation also enjoy a monotonicity
property with respect to the parameter $a$ and this is a key feature
to establish the uniqueness of $a_*$. We refer to \cite{IL16} for
the proof of Theorem~\ref{th.uniq} when $N=1$.

Such a transformation does not seem to be available for $N\ge 2$ and, moreover, it seems that no
monotonicity of $f(\cdot;a)$, or other functions associated to it, with respect to $a>0$ holds true. This is thus an extra difficulty to be overcome on the way to the proof of Theorem~\ref{th.uniq}. Indeed, several recent uniqueness or classification results for ordinary differential equations derived for the study of self-similar profiles for nonlinear diffusion equations heavily rely on the monotonicity of the solutions with respect to the shooting parameter, see \cite{CQW03, Shi04, IaLa13a, IaLa13b, YeYi15} for example. The situation therein is similar to the one encountered here and there are several ground states, the monotonicity property being very helpful to select the one with the fastest decay and establish its uniqueness.

We failed to find such a property for \eqref{SSODE} in dimensions $N\geq2$ and we thus use some different, and technically more involved, ideas. The uniqueness proof is actually based on the construction of a \emph{Pohozaev functional} associated to the differential equation
$$
g''(r)+\left(1+\frac{N-1}{r}\right)g'(r)+g(r)^{1/(p-1)}-\frac{N-1}{r^2}g(r)=0,
$$
where $g:=-|f'|^{p-2}f'$. This approach of constructing Pohozaev-type functionals to study uniqueness for positive radial solutions to some elliptic equations seems to come, up to our knowledge, from Yanagida \cite{Ya91a, Ya91b}, while closer references to our case are the recent works by Shioji and Watanabe \cite{ShWa13, ShWa16}.

\medskip

Thanks to Theorem~\ref{th.uniq} we are in a position to state the main convergence result of the present paper. Its validity requires further assumptions on the initial condition $u_0$: we assume that
\begin{equation}
u_0 \;\text{ is radially symmetric and }\; \nabla u_0(x)\cdot x \le 0\ , \qquad x\in\real^N\ , \label{radsymdec}
\end{equation}
along with an exponential decay at infinity. More
precisely, there is $\kappa_0>0$ such that
\begin{equation}\label{decayin}
0 \le u_0(x) \le \kappa_0\ e^{-|x|/(p-1)}\ , \qquad
x\in\mathbb{R}^N\ .
\end{equation}

\begin{theorem}[Convergence near extinction]\label{th.conv}
Let $u$ be the solution to the Cauchy problem \eqref{eq1}-\eqref{eq2} with an initial condition $u_0$ satisfying \eqref{hypIC}, \eqref{radsymdec}, and \eqref{decayin}. We denote the finite extinction time of $u$ by $T_e \in (0,\infty)$. Then
\begin{equation}\label{eq.conv}
\lim\limits_{t\to T_e}(T_e-t)^{-1/(2-p)} \|u(t)-U_*(t)\|_\infty=0,
\end{equation}
where
$$
U_*(t,x)=[(2-p)(T_e-t)]^{1/(2-p)}f(|x|;a_*), \qquad (t,x)\in (0,T_e)\times \real^N\ ,
$$
and $f(\cdot;a_*)$ is defined in Theorem~\ref{th.uniq}.
\end{theorem}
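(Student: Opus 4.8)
The plan is to prove Theorem~\ref{th.conv} by exploiting the variational (gradient flow) structure of \eqref{eq1} that is advertised in the abstract, combined with the rigidity furnished by Theorem~\ref{th.uniq}. First I would rescale time and amplitude to remove the degeneration at the extinction time: setting $s:=-\log(T_e-t)\to\infty$ as $t\to T_e$ and
$$
v(s,x):=(2-p)^{-1/(2-p)}(T_e-t)^{-1/(2-p)}u(t,x),
$$
one computes that $v$ solves an autonomous quasilinear equation of the form
$$
\partial_s v-\Delta_p v+|\nabla v|^{p-1}-\frac{1}{2-p}\,v=0\ ,\qquad (s,x)\in(s_0,\infty)\times\real^N\ ,
$$
whose stationary solutions are exactly the profiles $f(\cdot;a)$ of \eqref{SSODE} (up to the homogeneity normalization), and in particular $f(\cdot;a_*)$. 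The target \eqref{eq.conv} is then equivalent to $\|v(s)-f(\cdot;a_*)\|_\infty\to0$ as $s\to\infty$.

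Next I would set up the Lyapunov functional. The rescaled equation is (formally) the gradient flow of a functional of the type
$$
\mathcal{E}[v]=\int_{\real^N}\left(\frac1p|\nabla v|^p-\frac{1}{2(2-p)}v^2\right)\rho(x)\dx + \text{(absorption term)}\ ,
$$
for an appropriate weight $\rho$ reflecting the first-order drift $|\nabla v|^{p-1}$ and the rescaling; the crucial point is that $s\mapsto\mathcal{E}[v(s)]$ is nonincreasing along the flow, with dissipation controlled by $\|\partial_s v(s)\|^2$ in a suitable weighted $L^2$-norm. Combined with the a priori bounds inherited from \eqref{radsymdec}--\eqref{decayin} — radial monotonicity of $v(s)$ in $|x|$ is preserved, and the exponential decay \eqref{decayin} with the exact rate $1/(p-1)$ is designed to be propagated uniformly in $s$, giving compactness in $C(\real^N)$ of the trajectory $\{v(s):s\ge s_0\}$ — the dissipation estimate forces every $\omega$-limit point $\phi$ of the trajectory to be a nonnegative, radially nonincreasing, exponentially decaying stationary solution of the rescaled equation, hence a ground state $f(\cdot;a)$ with $a\in(0,a_*]$ by Theorem~\ref{th.uniq}(a) and the decay statement \eqref{decay}.

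To finish, I would rule out $a\in(0,a_*)$ and thereby identify the $\omega$-limit set with the single point $f(\cdot;a_*)$. Here the decay rate is the key discriminator: Theorem~\ref{th.uniq}(a) says $f(\cdot;a)$ decays only algebraically, like $r^{-(p-1)/(2-p)}$, for $a<a_*$, whereas the exponential bound \eqref{decayin}, with its sharp constant $1/(p-1)$, is preserved by the rescaled flow and therefore every $\omega$-limit point must decay at least as fast as $e^{-r/(p-1)}$. This excludes all $a<a_*$ and leaves only $a=a_*$, whose profile decays exactly at rate $e^{-r/(p-1)}$ by \eqref{decay}. Since the $\omega$-limit set is connected and reduced to $\{f(\cdot;a_*)\}$, the full trajectory converges, and undoing the rescaling yields \eqref{eq.conv} (the convergence being in the $L^\infty$ norm after multiplication by $(T_e-t)^{-1/(2-p)}$, exactly as claimed).

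The main obstacle I anticipate is the compactness and regularity analysis needed to make the $\omega$-limit argument rigorous: because $\Delta_p$ is singular for $p<2$ and the equation is neither uniformly parabolic nor has smooth coefficients, one must establish uniform-in-$s$ estimates (e.g. gradient bounds, equicontinuity, and especially the uniform exponential tail control) strong enough to pass to the limit in the weak formulation and to guarantee that the limit is genuinely a solution of the stationary ODE \eqref{SSODE} rather than merely a weak stationary state; propagating the exact exponential decay rate $1/(p-1)$ uniformly in time — rather than just some exponential rate — is the delicate quantitative point on which the selection of $a_*$ hinges. A secondary technical difficulty is verifying that the formal Lyapunov functional is well-defined and actually nonincreasing along weak solutions, which typically requires a regularization/approximation scheme and careful justification of the energy identity.
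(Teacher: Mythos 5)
Your overall architecture coincides with the paper's: self-similar rescaling, a weighted gradient-flow/Lyapunov structure valid for radially non-increasing solutions, compactness of the rescaled trajectory, and identification of the $\omega$-limit set through the ODE classification of Theorem~\ref{th.uniq}. However, there are two genuine gaps. First, you never exclude the trivial $\omega$-limit point $w_*\equiv 0$, which is a non-negative, radially non-increasing, rapidly decaying stationary solution of the rescaled equation and is therefore not ruled out by anything in your argument; admitting it would correspond to $u$ becoming extinct strictly faster than the rate $(T_e-t)^{1/(2-p)}$. The paper devotes a separate subsection to the matching lower bound $\|u(t)\|_\infty \ge C_3 (T_e-t)^{1/(2-p)}$ (Proposition~\ref{propZ1}), proved via the gradient estimate of \cite{IaLa12}, Gagliardo--Nirenberg, and an ODE argument for $\tau(s)=\int_s^{T_e}\|u(t)\|_\infty^{(p-1)(N+\vartheta)/2N}\,dt$; this is a substantive step, not a technicality, and your proposal omits it entirely.

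Second, your selection mechanism for $a_*$ rests on the claim that the exponential bound \eqref{decayin} with the \emph{exact} rate $1/(p-1)$ is propagated \emph{uniformly} to the rescaled variable $v$. The comparison bound $u(t,x)\le\kappa_0 e^{-|x|/(p-1)}$ (Lemma~\ref{lemA11}) does persist for $u$, but after division by $[(2-p)(T_e-t)]^{1/(2-p)}$ the constant degenerates as $t\to T_e$, so it yields no uniform-in-$s$ exponential bound on $v(s)$; no such bound is established in the paper and it is not clear how you would obtain one. Fortunately the exact rate is not needed: to discard $a\in(0,a_*)$ it suffices to beat the algebraic decay $r^{-(p-1)/(2-p)}$ of those profiles. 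The paper does this by showing that every $\omega$-limit point lies in $L^2(\real^N;e^{|x|}\,dx)$, a consequence of the uniform bound on $\mathcal{I}(v(s))$ and $\mathcal{J}(v(s))$ together with the pointwise tail estimate $|v(s,x)|\le C|x|^{-(N-1)/p}e^{-|x|/p}\mathcal{J}(v(s))^{1/p}$ of Lemma~\ref{lemA16} (rate $1/p$, not $1/(p-1)$). You should replace your rate-matching argument by this weaker, attainable criterion. A smaller point: your Lyapunov functional is left as ``$\ldots+\text{(absorption term)}$''; the structural observation that makes everything work is that, for radially non-increasing $v$, the absorption $|\nabla v|^{p-1}$ combines with $\Delta_p v$ under the weight $e^{|x|}$ so that the energy is exactly $\mathcal{J}-\mathcal{I}$ with no extra term, and its non-negativity (needed for \eqref{db20}) itself requires the separate argument of Lemma~\ref{lemA15}.
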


The proof of Theorem~\ref{th.conv} combines
several different techniques: sharp estimates for $u$ and $\nabla u$ near the extinction time, identification of the $\omega$-limit set in self-similar variables, and the uniqueness of the fast decaying solution to \eqref{SSODE} given by Theorem~\ref{th.uniq}. The first two steps actually rely on a \emph{variational structure} of Eq.~\eqref{eq1} which is only available for non-negative, radially symmetric solutions with non-increasing profiles and was noticed in \cite{LStxx} for a related problem. More precisely, \eqref{eq1} is in that
framework  a \emph{gradient flow} in $L^2(\real^N,e^{|x|}dx)$ for the functional
$$
\mathcal{J}(v)=\frac{1}{p}\int_{\real^N} e^{|x|}\ |\nabla v(x)|^{p}\,dx.
$$
This structure not only allows us to adapt a technique from \cite{BeHo80} (used originally for the fast diffusion equation $\partial_t \phi - \Delta \phi^m=0$ in a bounded domain with homogeneous Dirichlet boundary conditions and $m\in (0,1)$) to
derive optimal bounds near the extinction time, but also persists in self-similar variables and ensures that the $\omega$-limit set with respect to these variables only contains stationary solutions enjoying suitable integrability properties. Combining these
information with the uniqueness provided by Theorem~\ref{th.uniq} we conclude that $f(\cdot;a_*)$ is the only element of the $\omega$-limit set, which completes the proof.

\medskip

As already mentioned in the Introduction, the behavior near the extinction time has been studied also for the fast diffusion equation with zero order absorption \eqref{PMEa}. In particular, there is a corresponding critical case for \eqref{PMEa}, namely $q=m$, studied in \cite[Sections~4-5]{FV} in dimension $N=1$ and in \cite{dPSa02} for $N\geq2$. There is a \emph{striking difference}
between \eqref{eq1} and \eqref{PMEa} with $q=m$: for the latter there is a \emph{unique radially symmetric ground state} to the associated
ordinary differential equation, while the
convergence to zero \emph{no longer guarantees uniqueness} in \eqref{SSODE}-\eqref{ICa}, as we have seen in Theorem~\ref{th.uniq}. The approaches developed in \cite{FV,dPSa02} thus do not seem to be applicable to \eqref{eq1} and the study of \eqref{eq1} requires new and more involved arguments to select the attracting profile among the different existing ones. Another noticeable difference is that, for \eqref{PMEa} with $q=m$, it is sufficient that $u_0(x) \to 0$ as $|x|\to\infty$ in order for a convergence result similar to Theorem~\ref{th.conv} to hold true (at least in dimension $N=1$, see \cite[Theorem~5.1]{FV}). But, conditions on the tail of $u_0(x)$ as $|x|\to\infty$ are needed for Theorem~\ref{th.conv} to be valid. Such limitations on $u_0$ are actually necessary, since a close inspection of the proof of \cite[Theorem~1.2]{ILSxx} shows that, if
$$
\lim\limits_{|x|\to\infty} |x|^{(p-1)/(2-p)}u_0(x)=\infty \;\;\text{ and }\;\; u_0(x)>0 \;\;\text{for any}\;\; x\in\real^N,
$$
then extinction in finite time does not even occur and the corresponding solution to \eqref{eq1} remains positive in $\real^N$ for any $t>0$. However, in the light of the above result, it is likely that the required exponential decay \eqref{decayin} might be relaxed.

\medskip

\noindent \textbf{Organization of the paper.} The proofs of the main
results are given in two separate sections. Taking first
Theorem~\ref{th.uniq} for granted, we devote the next
Section~\ref{sec.s3} to the dynamical behavior of \eqref{eq1}. There
we exploit the variational structure of Eq.~\eqref{eq1} to derive
temporal estimates for $u(t)$ and $\nabla u(t)$ for $t\in(0,T_e)$
and prove that, in self-similar variables, the $\omega$-limit set
only contains non-zero solutions to \eqref{SSODE}-\eqref{ICa} which
belong to $W^{1,\infty}(\real^N)\cap L^2(\real^N;e^{|x|}dx)$. The
fact that there is a unique solution to \eqref{SSODE} enjoying such
a decay property is a consequence of  Theorem~\ref{th.uniq} which is
proved in Section~\ref{sec.s4}. Its proof relies on ordinary
differential equations techniques and in particular the study of
several auxiliary functions.

\section{Dynamical behavior}\label{sec.s3}

Consider $u_0\in W^{1,\infty}(\mathbb{R}^N)$ satisfying \eqref{hypIC}, \eqref{radsymdec}, and \eqref{decayin}. According to \cite[Theorems~1.1--1.3]{IaLa12} and \cite[Proposition~4.4]{ILSxx} there is a unique non-negative (viscosity) solution $u\in C([0,\infty)\times \mathbb{R}^N)$ to \eqref{eq1}-\eqref{eq2} which is also a weak solution and enjoys the following properties: there
are $T_e>0$ and $C_0=C_0(N,p,u_0)$ such that
\begin{equation}
u(t,x)>0 \;\mbox{ for }\; (t,x)\in (0,T_e)\times\mathbb{R}^N\ ,
\qquad u(t,x)=0 \;\mbox{ for }\; (t,x)\in
[T_e,\infty)\times\mathbb{R}^N\ , \label{db1}
\end{equation}
and
\begin{equation}
|\nabla\log u(t,x) | \le C_0 \left( 1 + \|u_0\|_\infty^{(2-p)/p}
t^{-1/p}\right)\ , \qquad (t,x)\in (0,T_e)\times\mathbb{R}^N\ .
\label{db2}
\end{equation}
Furthermore, the rotational invariance of \eqref{eq1}, the comparison principle, and the symmetry and monotonicity properties
of $u_0$ entail that, for each $t\in (0,T_e)$,
\begin{equation}
x\mapsto u(t,x) \;\mbox{ is radially symmetric with a non-increasing
profile}, \label{db3}
\end{equation}
see, e.g., \cite[Lemma~2.2]{ILSxx} for a proof.

\subsection{Temporal estimates: upper bounds}\label{s3.1}

We first prove that the spatial exponential decay of $u_0$ is preserved throughout time evolution.

\begin{lemma}\label{lemA11}
For each $t\in (0,T_e)$, there holds
\begin{equation}
0 \le u(t,x) \le \kappa_0\ e^{-|x|/(p-1)}\ , \qquad
x\in\mathbb{R}^N\ . \label{db4}
\end{equation}
\end{lemma}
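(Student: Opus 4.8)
The plan is to prove that the time-independent function $w(x):=\kappa_0\,e^{-|x|/(p-1)}$ is a supersolution to \eqref{eq1} on $(0,\infty)\times(\real^N\setminus\{0\})$, and then to invoke the comparison principle for viscosity solutions together with the initial bound \eqref{decayin}. The reason this should work is precisely the "critical" homogeneity noted in the paper: the exponential profile $e^{-r/(p-1)}$ is exactly the decay rate appearing in \eqref{decay}, and it is the one for which the three competing terms $-\Delta_p w$, $|\nabla w|^{p-1}$, and (for the separate-variable equation) the zeroth-order term balance.

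First I would compute, for $x\neq 0$ and $r=|x|$: $\nabla w = -\frac{\kappa_0}{p-1}\,e^{-r/(p-1)}\,\frac{x}{r}$, so $|\nabla w| = \frac{\kappa_0}{p-1}\,e^{-r/(p-1)}$, and hence $|\nabla w|^{p-2}\nabla w = -\left(\frac{\kappa_0}{p-1}\right)^{p-1} e^{-r/(p-1)}\,\frac{x}{r}$ (using $|\nabla w|^{p-2}\cdot|\nabla w| = |\nabla w|^{p-1}$ and keeping the direction $-x/r$). Taking the divergence of this radial vector field $G(r)\,x/r$ with $G(r) = -\left(\frac{\kappa_0}{p-1}\right)^{p-1} e^{-r/(p-1)}$ gives $\Delta_p w = G'(r) + \frac{N-1}{r}G(r)$. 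Since $G'(r) = \frac{1}{p-1}\left(\frac{\kappa_0}{p-1}\right)^{p-1} e^{-r/(p-1)} = \frac{1}{p-1}|\nabla w|^{p-1}$ and $\frac{N-1}{r}G(r) = -\frac{N-1}{r}|\nabla w|^{p-1} \le 0$, we obtain
$$
\partial_t w - \Delta_p w + |\nabla w|^{p-1} = -\Delta_p w + |\nabla w|^{p-1} = -G'(r) - \frac{N-1}{r}G(r) + |\nabla w|^{p-1} = \left(1 - \frac{1}{p-1}\right)|\nabla w|^{p-1} + \frac{N-1}{r}|\nabla w|^{p-1}.
$$
Here $1 - \frac{1}{p-1} = \frac{p-2}{p-1} < 0$ since $p<2$, so this expression is a priori of indeterminate sign; but the term $\frac{N-1}{r}|\nabla w|^{p-1}$ blows up as $r\to 0$, so near the origin positivity is clear for $N\ge 2$, and for large $r$ one has to check the sign of $\frac{p-2}{p-1}+\frac{N-1}{r}$. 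This is where I would be slightly more careful: in fact it is cleaner to drop the nonnegative term $\frac{N-1}{r}|\nabla w|^{p-1}$ only when it helps, and for the range where $\frac{p-2}{p-1}+\frac{N-1}{r}<0$ one must instead relax the supersolution: replace $\kappa_0$ by a slightly larger constant, or shift $|x|$ to $|x|+\delta$, or — most robustly — compare on the exterior region $\{|x|>\rho\}$ where one has enough room, while handling $\{|x|\le\rho\}$ by the crude a priori bound $\|u(t)\|_\infty \le \|u_0\|_\infty$ (from the comparison principle) together with choosing $\kappa_0 e^{-\rho/(p-1)} \ge \|u_0\|_\infty$. I expect the precise bookkeeping here — ensuring $w$ (or a suitable modification of it) is a genuine viscosity supersolution across the singularity at $x=0$ and out to infinity, simultaneously — to be the main technical obstacle, though it is entirely routine in spirit.

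Once $w$ is established as a supersolution with $w\ge u_0 = u(0,\cdot)$, the comparison principle for viscosity solutions of \eqref{eq1} (available from \cite{IaLa12}, and applicable since $u$ is the unique non-negative viscosity solution and both $u$ and $w$ have the needed regularity and decay) yields $u(t,x)\le w(x)$ for all $(t,x)\in(0,T_e)\times\real^N$, which is \eqref{db4}; the lower bound $u\ge 0$ is already in \eqref{db1}. A minor point: to apply comparison on an unbounded domain one needs a growth/decay condition at infinity, which is supplied by the fact that $u$ itself decays (indeed $u$ has compact support or at least vanishes at infinity by the instantaneous-shrinking/decay results quoted), so no extra argument is needed. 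If the exact-profile computation above does turn out to give a supersolution directly (e.g. if $N=1$, or after absorbing the $\frac{p-2}{p-1}$ term into a constant multiplier as in the companion estimates), the proof is immediate; otherwise the exterior-region-plus-$L^\infty$-bound patching described above closes the remaining case.
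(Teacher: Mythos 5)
Your overall strategy --- exhibit $\Sigma(x)=\kappa_0\, e^{-|x|/(p-1)}$ as a stationary supersolution and conclude by the comparison principle --- is exactly the paper's. However, your computation of the $p$-Laplacian contains an arithmetic error that derails the rest of the argument. You write $|\nabla w|^{p-2}\nabla w = -(\kappa_0/(p-1))^{p-1}e^{-r/(p-1)}\,x/r$, but raising $|\nabla w|=\frac{\kappa_0}{p-1}e^{-r/(p-1)}$ to the power $p-1$ turns the exponential into $e^{-r}$: the correct flux is $G(r)\,x/r$ with $G(r)=-\bigl(\frac{\kappa_0}{p-1}\bigr)^{p-1}e^{-r}$. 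Consequently $G'(r)=+\bigl(\frac{\kappa_0}{p-1}\bigr)^{p-1}e^{-r}=|\nabla w|^{p-1}$ (not $\frac{1}{p-1}|\nabla w|^{p-1}$), and
\begin{equation*}
-\Delta_p w+|\nabla w|^{p-1} \;=\; -G'(r)-\frac{N-1}{r}G(r)+|\nabla w|^{p-1} \;=\; \frac{N-1}{r}\left(\frac{\kappa_0}{p-1}\right)^{p-1}e^{-r}\;\ge\;0
\end{equation*}
for every $r>0$: the second-order term cancels the absorption term \emph{exactly}, which is precisely what the decay rate $1/(p-1)$ is designed to achieve. There is therefore no indeterminate sign and no bad region at large $r$, and none of your patching machinery (enlarging $\kappa_0$, shifting $|x|$, exterior-domain comparison combined with an $L^\infty$ bound on a ball) is needed. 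Note moreover that those patches would not even yield the stated inequality, which must hold with the \emph{given} constant $\kappa_0$ from \eqref{decayin}; and the condition $\kappa_0 e^{-\rho/(p-1)}\ge\|u_0\|_\infty$ you propose can fail for every $\rho>0$ (e.g.\ when $\|u_0\|_\infty=\kappa_0$).

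The one genuine technical point you flag but leave unresolved --- that $\Sigma$ is not differentiable at $x=0$, so the viscosity supersolution property must be checked there --- is what the paper actually spends its effort on: if $\Sigma-\varphi$ has a local minimum at $(t_0,0)$ for an admissible test function $\varphi$, then $x\mapsto\varphi(t_0,x)$ has a local maximum at $x=0$ and $t\mapsto\varphi(t,0)$ has a local maximum at $t_0$, whence $\nabla\varphi(t_0,0)=0$ and $\partial_t\varphi(t_0,0)=0$, which is enough for the supersolution requirement in the sense of \cite{OhSa97}; the comparison principle of \cite{OhSa97} then gives \eqref{db4}, the lower bound being already contained in \eqref{db1}. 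As written, your proposal does not close: the central computation is wrong, and the repairs you build on top of it are both unnecessary and insufficient.
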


\begin{proof} Setting $\Sigma(x) := \kappa_0\ e^{-|x|/(p-1)}$ for $x\in \mathbb{R}^N$ we observe that
$$
\partial_r\Sigma(r) = - \frac{\kappa_0}{p-1} e^{-r/(p-1)}\ , \qquad \partial_r^2\Sigma(r) = \frac{\kappa_0}{(p-1)^2} e^{-r/(p-1)}
$$
for $r=|x|\ne 0$ so that
\begin{align*}
-\Delta_p \Sigma(x) + |\nabla\Sigma(x)|^{p-1} & = \left( \frac{\kappa_0}{p-1} \right)^{p-1} \left( - 1 + \frac{N-1}{|x|} \right) e^{-|x|} + \left( \frac{\kappa_0}{p-1} \right)^{p-1} e^{-|x|} \\
& = \left( \frac{\kappa_0}{p-1} \right)^{p-1} \frac{N-1}{[x|}
e^{-|x|} \ge 0\ , \qquad x\ne 0\ .
\end{align*}

Now, if $\varphi\in C^2((0,T_e)\times\mathbb{R}^N)$ is an admissible
test function in the sense of \cite[Definition~2.3]{OhSa97} and
$\Sigma-\varphi$ has a local minimum at $(t_0,x_0)\in
(0,T_e)\times\mathbb{R}^N$, then the following alternative occurs:
either $x_0\ne 0$ which implies that $\nabla\varphi(t_0,x_0) =
\nabla\Sigma(x_0)\ne 0$ and $\partial_t\varphi(t_0,x_0) = 0$. We
then infer from the ellipticity of the $p$-Laplacian that
$$
\partial_t\varphi(t_0,x_0) - \Delta_p\varphi(t_0,x_0) + |\nabla\varphi(t_0,x_0)|^{p-1} \ge -\Delta_p \Sigma(x_0) + |\nabla\Sigma(x_0)|^{p-1} \ge 0\ ,
$$
which is the usual condition to be a supersolution. Or $x_0=0$ and
there is $\delta>0$ such that
$$
\varphi(t_0,x) - \varphi(t_0,0) \le \Sigma(x) - \Sigma(0) \le 0
\;\;\mbox{ and }\;\; \varphi(t,0) \le \varphi(t_0,0)
$$
for $t\in (t_0-\delta,t_0+\delta)$ and $x\in B(0,\delta)$. In
particular, $x\mapsto \varphi(t_0,x)$ has a local maximum at $x=0$
and $t\mapsto \varphi(t,0)$ has a local maximum at $t_0$. Therefore
$\nabla\varphi(t_0,0)=0$ and $\partial_t\varphi(t_0,0)=0$ and the
requirements to be a supersolution stated in
\cite[Definition~2.4]{OhSa97} are satisfied.

We have thus shown that $\Sigma$ is a supersolution to \eqref{eq1}
and it follows from \eqref{decayin} and the comparison principle
\cite[Theorem~3.9]{OhSa97} that $u(t,x)\le \Sigma(x)$ for $(t,x)\in
(0,T_e)\times\mathbb{R}^N$.
\end{proof}

The next step is to derive optimal temporal decay estimates for $u$.
To this end, we exploit the already mentioned variational structure
of \eqref{eq1} which is available here thanks to the symmetry and
monotonicity properties of of $u$ and adapt the technique of
\cite{BeHo80, SaVe94} to relate a weighted $L^2$-norm of $u$ and a
weighted $L^p$-norm of $\nabla u$ and obtain bounds on these
quantities. More precisely, define
\begin{equation}
\mathcal{I}(z) := \frac{1}{2} \int_{\mathbb{R}^N} e^{|x|} |z(x)|^2\
dx\ , \qquad \mathcal{J}(z) := \frac{1}{p} \int_{\mathbb{R}^N}
e^{|x|} |\nabla z(x)|^p\ dx\ , \label{db5}
\end{equation}
whenever it makes sense. We gather in the next lemma useful information on $\mathcal{I}(u)$ and $\mathcal{J}(u)$.

\begin{lemma}\label{lemA120}
For each $t\in [0,T_e)$, both $\mathcal{I}(u(t))$ and
$\mathcal{J}(u(t))$ are positive and finite. In addition,
\begin{equation}
\mathcal{I}(u)\in C([0,T_e])\cap C^1((0,T_e])\ , \qquad \mathcal{J}(u)\in C((0,T_e])\ , \label{z0}
\end{equation}
and
\begin{align}
\frac{d}{dt}\mathcal{I}(u(t)) & = - p \mathcal{J}(u(t))\ , \qquad t\in (0,T_e)\ , \label{z1} \\
\mathcal{J}(u(t_2)) + \int_{t_1}^{t_2} \mathcal{D}(u(t))\ dt & \le \mathcal{J}(u(t_1))\ , \qquad 0 < t_1 < t_2 \le T_e\ , \label{z2}
\end{align}
where
\begin{equation}
\mathcal{D}(u(t)) := \int_{\mathbb{R}^N} e^{|x|} |\partial_t u(t,x)|^2\ dx\ , \qquad t\in (0,T_e)\ . \label{z3}
\end{equation}
\end{lemma}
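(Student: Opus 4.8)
The backbone of the argument is the variational (gradient-flow) structure of \eqref{eq1} on radially symmetric functions with a non-increasing profile, combined with the pointwise exponential bounds already at hand: \eqref{db4} for $u$, and \eqref{db2} together with \eqref{db4} for $\nabla u$, which give $|\nabla u(t,x)|\le C(t)e^{-|x|/(p-1)}$ with $C$ locally bounded on $(0,T_e)$. First I would dispose of finiteness and of the continuity of $\mathcal{I}(u)$. Since $p<2$ forces $2/(p-1)>1$ and $p/(p-1)>1$, the bound \eqref{db4} makes $x\mapsto e^{|x|}u(t,x)^2\le\kappa_0^2 e^{-(2/(p-1)-1)|x|}$ integrable for $t\in[0,T_e)$, and the bound on $\nabla u$ makes $x\mapsto e^{|x|}|\nabla u(t,x)|^p\le C(t)^p e^{-|x|/(p-1)}$ integrable for $t\in(0,T_e)$; the finiteness of $\mathcal{J}(u(0))$ is obtained instead from the fact that $u_0$ is Lipschitz with a non-increasing profile by \eqref{radsymdec}, so that on the annulus $\{k\le|x|\le k+1\}$ one has $\int|\nabla u_0|^p\,dx\le\|\nabla u_0\|_\infty^{p-1}(u_0(k)-u_0(k+1))\le\|\nabla u_0\|_\infty^{p-1}\kappa_0 e^{-k/(p-1)}$, and the resulting series (with weight $e^{k+1}(k+1)^{N-1}$) converges since $p<2$. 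Positivity of $\mathcal{I}(u(t))$ and $\mathcal{J}(u(t))$ is clear since $u(t)$ is neither identically zero nor constant on $[0,T_e)$. Finally, as $u\in C([0,T_e]\times\real^N)$ with $u(T_e)\equiv 0$ and $x\mapsto u(t,x)^2 e^{|x|}$ dominated by the fixed integrable function $\kappa_0^2 e^{-(2/(p-1)-1)|x|}$, dominated convergence yields $\mathcal{I}(u)\in C([0,T_e])$.

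The crux is the following algebraic identity, which exhibits the gradient-flow structure: for any radially symmetric $v$ with $\partial_r v\le 0$,
\[
\nabla\big(e^{|x|}\big)\cdot|\nabla v|^{p-2}\nabla v=e^{|x|}|\partial_r v|^{p-2}\partial_r v=-e^{|x|}|\nabla v|^{p-1},
\]
whence $\mathrm{div}\big(e^{|x|}|\nabla v|^{p-2}\nabla v\big)=e^{|x|}\big(\Delta_p v-|\nabla v|^{p-1}\big)$. By \eqref{db3} this applies to $u(t)$ for every $t\in(0,T_e)$, so \eqref{eq1} rewrites as $e^{|x|}\partial_t u=\mathrm{div}\big(e^{|x|}|\nabla u|^{p-2}\nabla u\big)$, that is, $u$ is the gradient flow of $\mathcal{J}$ in $L^2(\real^N;e^{|x|}dx)$. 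To obtain \eqref{z1} I would fix $0<t_1<t_2<T_e$ and test the weak formulation of \eqref{eq1} (via Steklov averaging) with $u\,e^{|x|}\chi_R^2$, where $\chi_R$ is a cutoff equal to $1$ on $B_R$ and supported in $B_{2R}$. The radial part of the integrated diffusion term (produced after integrating the $\Delta_p$-term by parts), $-\int u\,e^{|x|}\chi_R^2|\nabla u|^{p-2}(\nabla u\cdot x/|x|)\,dx=+\int u\,e^{|x|}\chi_R^2|\nabla u|^{p-1}\,dx$, cancels the absorption term exactly --- this is the critical-exponent cancellation --- and one is left with
\[
\frac12\frac{d}{dt}\int_{\real^N} u^2 e^{|x|}\chi_R^2\,dx=-\int_{\real^N}|\nabla u|^p e^{|x|}\chi_R^2\,dx-2\int_{\real^N} u\,e^{|x|}\chi_R|\nabla u|^{p-2}\nabla u\cdot\nabla\chi_R\,dx.
\]
Integrating in time and letting $R\to\infty$, the correction term is $O(R^{-1})\int_{\{|x|>R\}}e^{-|x|/(p-1)}\,dx\to 0$ by the pointwise bounds, while monotone convergence handles the remaining terms, yielding $\mathcal{I}(u(t_2))-\mathcal{I}(u(t_1))=-p\int_{t_1}^{t_2}\mathcal{J}(u(s))\,ds$, i.e.\ \eqref{z1}.

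For \eqref{z2}, the formal computation --- differentiate $\mathcal{J}(u(t))$, integrate by parts using the weighted divergence form of the equation, and get $\tfrac{d}{dt}\mathcal{J}(u(t))=-\mathcal{D}(u(t))$, hence \eqref{z2} with equality --- cannot be justified from the regularity of $u$ alone, since second-order derivatives in time and space enter. I would instead regularize: replace the $p$-Laplacian by a non-degenerate approximation, smooth out $u_0$, solve on large balls, derive the (now classical) energy identity for the approximate solutions, and pass to the limit using the a priori bounds together with the lower semicontinuity of $\mathcal{J}$ and of $v\mapsto\int e^{|x|}|\partial_t v|^2\,dx$; only the inequality \eqref{z2} survives the limit, which is precisely the stated form. (Alternatively, once $u$ is identified with the semigroup solution generated by $\partial\mathcal{J}$ in $L^2(e^{|x|}dx)$, the Brezis theory of gradient flows of convex lower semicontinuous functionals gives the energy identity directly; and since $\mathcal{J}$ is positively homogeneous of degree $p$, so that $\langle\xi,z\rangle=p\,\mathcal{J}(z)$ for every $\xi\in\partial\mathcal{J}(z)$, it reproves \eqref{z1} as well.) I expect this step to be the main obstacle: carrying out the energy estimates rigorously for the low-regularity solution $u$, while keeping stable along the approximation the weighted radially monotone structure --- in particular the control of the absorption term that produces the divergence form and the cancellation.

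It remains to settle the continuity statements. On compact subintervals of $(0,T_e)$, the interior regularity of weak solutions to singular quasilinear parabolic equations makes $\nabla u$ continuous on $(0,T_e)\times\real^N$, and, together with the locally uniform bound $|\nabla u(t,x)|\le C(t)e^{-|x|/(p-1)}$ and dominated convergence, gives $\mathcal{J}(u)\in C((0,T_e))$; hence the right-hand side of \eqref{z1} is continuous and $\mathcal{I}(u)\in C^1((0,T_e))$. Near the extinction time, \eqref{db2} shows $|\nabla\log u(t,\cdot)|$ is bounded by a constant for $t$ close to $T_e$, so $|\nabla u(t,x)|\le C\,u(t,x)\le C\min\{\|u(t)\|_\infty,\kappa_0 e^{-|x|/(p-1)}\}$; since $\|u(t)\|_\infty\to 0$ as $t\to T_e$ by extinction and $x\mapsto e^{|x|}(\kappa_0 e^{-|x|/(p-1)})^p$ is integrable, dominated convergence gives $\mathcal{J}(u(t))\to 0=\mathcal{J}(u(T_e))$. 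Therefore $\mathcal{J}(u)\in C((0,T_e])$, and \eqref{z1} then upgrades $\mathcal{I}(u)$ to $C^1((0,T_e])$, completing the proof.
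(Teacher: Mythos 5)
Your proposal is correct and follows essentially the same route as the paper: finiteness and continuity from the exponential bounds \eqref{db4} and \eqref{db2} combined with interior regularity and dominated convergence, the identity \eqref{z1} from testing with $e^{|x|}u$ against a cutoff and exploiting the exact cancellation between the radial part of the weighted $p$-Laplacian and the absorption term (valid by \eqref{db3}), and \eqref{z2} via the formal energy identity for the gradient flow of $\mathcal{J}$, justified by regularization with non-degenerate approximations and retaining only the inequality in the limit. Your explicit verification that $\mathcal{J}(u_0)<\infty$ (which the paper's bound \eqref{db7} does not cover at $t=0$) and the dominated-convergence argument for $\mathcal{J}(u(t))\to 0$ as $t\to T_e$ are welcome additions, but the substance of the argument is the same.
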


Formally the proof of Lemma~\ref{lemA120} readily follows from \eqref{eq1} after multiplication by $e^{|x|} u$ and $e^{|x|} \partial_t u$ and integration over $\mathbb{R}^N$, using repeatedly the property $x \cdot \nabla u(t,x) \le 0$ for $(t,x)\in (0,T_e)\times\mathbb{R}^N$. Due to the degeneracy of the diffusion term and the unboundedness of the weight $e^{|x|}$, a rigorous proof requires additional approximation and truncation arguments. In order not to delay further the proof of Theorem~\ref{th.conv}, we postpone the proof of Lemma~\ref{lemA120} to the end of this section.

Thanks to \eqref{z1} and \eqref{z2}, optimal estimates for $\mathcal{I}(u(t))$ and $\mathcal{J}(u(t))$ may now be derived by adapting an argument from \cite{BeHo80}.

\begin{proposition}\label{lemA12}
There is $C_1=C_1(N,p,u_0)>0$ such that
\begin{equation}
\mathcal{I}(u(t))^{1/2} + \left[ t \mathcal{J}(u(t)) \right]^{1/p} \le C_1\
(T_e-t)^{1/(2-p)}\ , \qquad t\in (0,T_e]\ . \label{db6}
\end{equation}
\end{proposition}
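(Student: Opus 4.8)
The plan is to combine the differential identities of Lemma~\ref{lemA120} into a closed differential inequality for $\mathcal{I}(u(t))$ and integrate it, following the classical Berryman--Holland argument. First I would record the two monotonicity facts: from \eqref{z1} the map $t\mapsto \mathcal{I}(u(t))$ is non-increasing, and from \eqref{z2} the map $t\mapsto \mathcal{J}(u(t))$ is non-increasing on $(0,T_e]$. The crucial link between the two functionals is a weighted Gagliardo--Nirenberg or Poincar\'e-type inequality. Since the functions involved are radially symmetric with non-increasing profile and decay exponentially, testing \eqref{eq1} against $e^{|x|}u$ and integrating by parts produces, besides $-p\mathcal{J}(u)$, a term which can be controlled using $x\cdot\nabla u\le 0$; more importantly one expects an inequality of the form
\begin{equation*}
\mathcal{I}(z) \le K\, \mathcal{J}(z)^{2/p}
\end{equation*}
for all admissible $z$ (i.e.\ radial, non-increasing, in $W^{1,\infty}\cap L^2(e^{|x|}dx)$), with a constant $K=K(N,p)$. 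This is the step I expect to be the main obstacle, since the weight $e^{|x|}$ is unbounded and the inequality must be scale-consistent with the homogeneity of $\mathcal{I}$ and $\mathcal{J}$; the exponent $2/p$ is forced by the fact that $\mathcal{I}(\lambda z)=\lambda^2\mathcal{I}(z)$ and $\mathcal{J}(\lambda z)=\lambda^p\mathcal{J}(z)$, but one still has to prove such an inequality holds, presumably by a one-dimensional reduction exploiting radial monotonicity and integrating the pointwise bound $|z(r)|\le \int_r^\infty |z'(\rho)|\,d\rho$ against the weight.

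Granting the inequality $\mathcal{I}(z)\le K\mathcal{J}(z)^{2/p}$, I would proceed as follows. Set $y(t):=\mathcal{I}(u(t))$. Then \eqref{z1} gives $y'(t)=-p\mathcal{J}(u(t))\le -p\,(y(t)/K)^{p/2}$, i.e.\
\begin{equation*}
y'(t) \le -c\, y(t)^{p/2}, \qquad c:=p\,K^{-p/2}>0,
\end{equation*}
for $t\in(0,T_e)$. Since $p/2<1$, this is a differential inequality whose solutions vanish in finite time: integrating $\big(y^{1-p/2}\big)'=(1-p/2)y^{-p/2}y'\le -(1-p/2)c$ from $t$ to $T_e$ and using $y(T_e)=\mathcal{I}(u(T_e))=0$ (a consequence of \eqref{db1}, $u(T_e,\cdot)\equiv0$, together with the continuity $\mathcal{I}(u)\in C([0,T_e])$ from \eqref{z0}), we obtain
\begin{equation*}
y(t)^{1-p/2} \le (1-p/2)\,c\,(T_e-t), \qquad\text{hence}\qquad \mathcal{I}(u(t)) \le C\,(T_e-t)^{2/(2-p)}.
\end{equation*}
Taking square roots yields $\mathcal{I}(u(t))^{1/2}\le C'(T_e-t)^{1/(2-p)}$, which is the first half of \eqref{db6}. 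I should double-check that the computation only uses \eqref{z1} at interior points and the one-sided endpoint value $\mathcal{I}(u(T_e))=0$, so that no regularity of $\mathcal{J}(u)$ up to $T_e$ beyond \eqref{z0} is needed.

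For the second half, bounding $[t\mathcal{J}(u(t))]^{1/p}$, I would use \eqref{z2} to transfer the pointwise bound on $\mathcal{I}$ into a bound on $\mathcal{J}$. Since $\mathcal{J}(u)$ is non-increasing, for any $s\in(0,t)$ we have, integrating \eqref{z1} on $(s,t)$,
\begin{equation*}
\mathcal{I}(u(s)) - \mathcal{I}(u(t)) = p\int_s^t \mathcal{J}(u(\sigma))\,d\sigma \ge p\,(t-s)\,\mathcal{J}(u(t)),
\end{equation*}
so $\mathcal{J}(u(t))\le \mathcal{I}(u(s))/(p(t-s)) \le \mathcal{I}(u(s))/(p(t-s))$. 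Choosing $s=t/2$ and inserting the already-proven bound $\mathcal{I}(u(t/2))\le C(T_e-t/2)^{2/(2-p)}\le C(T_e-t/2)^{2/(2-p)}$ — and noting $T_e - t/2 \le T_e$ is not quite what we want, so instead I would choose $s=t-\min\{t/2,(T_e-t)/2\}$ or simply $s=t/2$ when $t\le T_e/2$ and handle $t$ close to $T_e$ by the monotonicity of $\mathcal{J}$ combined with $\mathcal{I}(u(t))\to0$ — gives
\begin{equation*}
t\,\mathcal{J}(u(t)) \le \frac{2\,t}{p\,t}\,\mathcal{I}(u(t/2)) = \frac{2}{p}\,\mathcal{I}(u(t/2)) \le C\,(T_e-t)^{2/(2-p)}
\end{equation*}
on a suitable subinterval, and the complementary range near $T_e$ is treated by picking $s$ proportional to $T_e-t$. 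Raising to the power $1/p$ and combining with the bound on $\mathcal{I}(u(t))^{1/2}$ produces \eqref{db6} with $C_1=C_1(N,p,u_0)$. The dependence on $u_0$ enters only through $T_e$ and through the constant in the weighted inequality (which is in fact universal), so the stated form of the constant is consistent; the only genuinely delicate point remains establishing the weighted functional inequality $\mathcal{I}(z)\le K\mathcal{J}(z)^{2/p}$ on the class of radial non-increasing functions.
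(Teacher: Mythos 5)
There is a genuine gap, and it sits exactly in the step you flagged as the delicate one --- but the problem is not the validity of the weighted functional inequality, it is the direction in which it acts. The inequality $\mathcal{I}(z)\le K\,\mathcal{J}(z)^{2/p}$ is indeed true on the class of radial functions with $\nabla z\in L^p(\mathbb{R}^N;e^{|x|}\,dx)$: it follows from the pointwise bound of Lemma~\ref{lemA16} together with the integrability of $r\mapsto r^{-(2-p)(N-1)/p}e^{-(2-p)r/p}$ on $(0,\infty)$, which uses $p>p_c$. However, this inequality bounds the dissipation rate $\mathcal{J}(u(t))=-\mathcal{I}(u(\cdot))'(t)/p$ from \emph{below} by $(\mathcal{I}(u(t))/K)^{p/2}$, so the differential inequality $y'\le -c\,y^{p/2}$ you derive integrates over $(t,T_e)$ to $0-y(t)^{1-p/2}\le -(1-p/2)c\,(T_e-t)$, that is, $y(t)^{1-p/2}\ge (1-p/2)c\,(T_e-t)$: a \emph{lower} bound on $\mathcal{I}(u(t))$, not the upper bound you wrote. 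This is not a fixable slip: a coercivity (Poincar\'e-type) inequality can only show that extinction proceeds at least this fast, hence that $\mathcal{I}(u(t))$ must still be at least of order $(T_e-t)^{2/(2-p)}$. The upper bound in \eqref{db6} requires controlling $\mathcal{J}(u(t))$ from \emph{above} by a multiple of $\mathcal{I}(u(t))^{p/2}$, and that reverse inequality is false as a functional inequality on the whole admissible class; it can only be obtained dynamically along the trajectory.

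This is precisely what the Berryman--Holland argument, which you invoke but do not implement, provides. The paper first derives $p\mathcal{J}(u(t))\le\sqrt{2\mathcal{I}(u(t))}\,\sqrt{\mathcal{D}(u(t))}$ from \eqref{z1} and the Cauchy--Schwarz inequality, then combines it with the dissipation inequality \eqref{z2} --- which the first half of your argument never uses --- to show that $t\mapsto\mathcal{J}(u(t))/\mathcal{I}(u(t))^{p/2}$ is non-increasing on $[T_e/2,T_e]$ (a step that moreover requires a time-mollification of $\mathcal{J}(u)$, since \eqref{z2} is only an integral inequality). The resulting bound $\mathcal{J}(u(t))\le r_0\,\mathcal{I}(u(t))^{p/2}$ with $r_0=\mathcal{J}(u(T_e/2))/\mathcal{I}(u(T_e/2))^{p/2}$ turns \eqref{z1} into $\frac{d}{dt}\mathcal{I}(u)\ge -p r_0\,\mathcal{I}(u)^{p/2}$, which integrates, now in the correct direction, to the desired upper bound. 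Your treatment of the second half of \eqref{db6} --- integrating \eqref{z1} over $(s,t)$, using the monotonicity of $\mathcal{J}(u)$, and choosing $s$ proportional to $T_e-t$ near the extinction time --- is sound and a reasonable alternative to the paper's direct use of the ratio bound, but it presupposes the upper bound on $\mathcal{I}(u)$ that your argument does not establish.
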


\begin{proof}
First, by \eqref{eq1} and the monotonicity and symmetry properties of $u$,
\begin{align*}
p \mathcal{J}(u(t)) & = \int_{\mathbb{R}^N} e^{|x|} |\nabla u(t,x)|^{p-2} \nabla u(t,x) \cdot \nabla u(t,x)\ dx \\
& = - \int_{\mathbb{R}^N} e^{|x|} \left( \Delta_p u(t,x) + |\nabla u(t,x)|^{p-2} \nabla u(t,x) \cdot \frac{x}{|x|} \right) u(t,x)\ dx \\
& = - \int_{\mathbb{R}^N} e^{|x|} u(t,x) \partial_t u(t,x)\ dx\ ,
\end{align*}
and we deduce from H\"older's inequality that
\begin{equation}
p \mathcal{J}(u(t))  \le \sqrt{2 \mathcal{I}(u(t))}
\sqrt{\mathcal{D}(u(t))}\ , \qquad t\in (0,T_e)\ . \label{db12}
\end{equation}

Now, fix $T\in (T_e/2,T_e)$. For $h\in (0,(T_e-T)/2)$ and $t\in [0,T]$,
define
$$
\mathcal{J}_h(u(t)) := \frac{1}{h} \int_t^{t+h} \mathcal{J}(u(s))\
ds\ .
$$
According to \eqref{z2} and \eqref{db12} there holds
$$
\frac{d}{dt} \mathcal{J}_h(u(t)) \le - \frac{1}{h} \int_t^{t+h}
\mathcal{D}(u(s))\ ds \le - \frac{p^2}{2h} \int_t^{t+h}
\frac{\mathcal{J}(u(s))^2}{\mathcal{I}(u(s))}\ ds
$$
for $t\in [0,T]$. We then deduce from \eqref{z1} and the above
inequality that
\begin{align*}
\frac{d}{dt} \left( \frac{\mathcal{J}_h(u(t))}{\mathcal{I}(u(t))^{p/2}} \right) & = \frac{1}{\mathcal{I}(u(t))^{p/2}} \frac{d\mathcal{J}_h(u(t))}{dt} - \frac{p}{2} \frac{\mathcal{J}_h(u(t))}{\mathcal{I}(u(t))^{(p+2)/2}} \frac{d\mathcal{I}(u(t))}{dt} \\
& \le \frac{p^2}{2} \frac{\mathcal{J}_h(u(t)) \mathcal{J}(u(t))}{\mathcal{I}(u)^{(p+2)/2}} - \frac{p^2}{2h \mathcal{I}(u(t))^{p/2}} \int_t^{t+h} \frac{\mathcal{J}(u(s))^2}{\mathcal{I}(u(s))}\ ds \\
& \le \frac{p^2}{2h \mathcal{I}(u(t))^{p/2}} \int_t^{t+h} \left(
\frac{\mathcal{J}(u(t))}{\mathcal{I}(u(t))} -
\frac{\mathcal{J}(u(s))}{\mathcal{I}(u(s))} \right)
\mathcal{J}(u(s)) \ ds\ .
\end{align*}
Owing to the continuity \eqref{z0} and the positivity of $\mathcal{I}(u)$ and $\mathcal{J}(u)$ in $[T_e/2,(T+T_e)/2]$, the
modulus of continuity
$$
\omega(h,(T+T_e)/2) := \sup \left\{ \left|
\frac{\mathcal{J}(u(t))}{\mathcal{I}(u(t))} -
\frac{\mathcal{J}(u(s))}{\mathcal{I}(u(s))} \right|\ :\ T_e/2\le s \le t
\le (T+T_e)/2\ , \ |t-s|\le h \right\}
$$
is well-defined and converges to zero as $h\to 0$. Consequently,
using also the time monotonicity of $\mathcal{I}(u)$ and
$\mathcal{J}(u)$, we obtain
\begin{align*}
\frac{d}{dt} \left( \frac{\mathcal{J}_h(u(t))}{\mathcal{I}(u(t))^{p/2}} \right) & \le \frac{p^2}{2h \mathcal{I}(u(t))^{p/2}} \omega(h,(T+T_e)/2) \int_t^{t+h} \mathcal{J}(u(s)) \ ds \\
& \le \frac{p^2}{2} \frac{\mathcal{J}(u_0)}{\mathcal{I}(u(T))^{p/2}}
\omega(h,(T+T_e)/2)
\end{align*}
for $t\in (T_e/2,T)$, hence, after integration,
$$
\frac{\mathcal{J}_h(u(t_2))}{\mathcal{I}(u(t_2))^{p/2}} \le
\frac{\mathcal{J}_h(u(t_1))}{\mathcal{I}(u(t_1))^{p/2}} +
\frac{p^2}{2} \frac{\mathcal{J}(u_0)}{\mathcal{I}(u(T))^{p/2}}
\omega(h,(T+T_e)/2) (t_2-t_1)
$$
for $T_e/2\le t_1 \le t_2 \le T$. Since the second term of the
right-hand side of the above inequality vanishes as $h\to 0$, we may
let $h\to 0$ and deduce from the time continuity of $\mathcal{J}(u)$
that $\mathcal{J}(u)/\mathcal{I}(u)^{p/2}$ is non-increasing in
$[T_e/2,T]$. As $T$ is arbitrary in $(T_e/2,T_e)$ we conclude that
\begin{equation*}
\frac{\mathcal{J}(u(t_2))}{\mathcal{I}(u(t_2))^{p/2}} \le
\frac{\mathcal{J}(u(t_1))}{\mathcal{I}(u(t_1))^{p/2}}\ , \qquad T_e/2
\le t_1 \le t_2 \le T_e\ .
\end{equation*}
In particular
\begin{equation}
\frac{\mathcal{J}(u(t))}{\mathcal{I}(u(t))^{p/2}} \le r_0 :=
\frac{\mathcal{J}(u(T_e/2))}{\mathcal{I}(u(T_e/2))^{p/2}} \ , \qquad T_e/2 \le t
\le T_e\ . \label{db13}
\end{equation}
We then infer from \eqref{z1} and \eqref{db13} that
$$
\frac{d}{dt} \mathcal{I}(u) \ge - p r_0 \mathcal{I}(u)^{p/2} \;\;\text{ in }\;\; [T_e/2,T_e]\ .
$$
Since $p<2$, we further obtain
$$
\frac{d}{dt} \mathcal{I}(u)^{(2-p)/2} \ge - \frac{p(2-p)}{2} r_0 \;\;\text{ in }\;\; [T_e/2,T_e]\ ,
$$
hence, after integrating over $(t,T_e)$, $t\in [T_e/2,T_e)$, and using the property
$\mathcal{I}(u(T_e))^{(2-p)/2}=0$,
$$
\mathcal{I}(u(t))^{(2-p)/2} \le \frac{p(2-p)}{2} r_0 (T_e-t)\ ,
\qquad t\in [T_e/2,T_e]\ .
$$
Combining the previous inequality with \eqref{db13} implies \eqref{db6} for $t\in [T_e/2,T_e]$. To complete the proof, we observe that, for $t\in (0,T_e/2)$,
$$
\mathcal{I}(u(t)) \le \mathcal{I}(u_0) \;\;\text{ and }\;\; t \mathcal{J}(u(t)) \le \int_0^t \mathcal{J}(u(s))\ ds \le \mathcal{I}(u_0)
$$
by Lemma~\ref{lemA120} and $t \mapsto \mathcal{I}(u_0)^\alpha (T_e-t)^{-1/(2-p)}$ is clearly bounded in $[0,T_e/2]$ for $\alpha\in \{1/2,1/p\}$.
\end{proof}

We next extend the temporal decay estimates established in Proposition~\ref{lemA12} to the $W^{1,\infty}$-norm of $u$.

\begin{corollary}\label{corA13}
There is $C_2=C_2(N,p,u_0)>0$ such that
\begin{eqnarray}
\| u(t) \|_\infty & \le & C_2\ (T_e-t)^{1/(2-p)}\ t^{-N/2p}\ , \qquad t\in (0,T_e)\ , \label{db14} \\
\| \nabla u(t) \|_\infty & \le & C_2\ (T_e-t)^{1/(2-p)}\
t^{-(N+2)/2p}\ , \qquad t\in (0,T_e)\ . \label{db15} \end{eqnarray}
\end{corollary}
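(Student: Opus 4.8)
The plan is to deduce both inequalities from the weighted estimate of Proposition~\ref{lemA12} combined with the logarithmic gradient bound \eqref{db2}. First, since $e^{|x|}\ge 1$, Proposition~\ref{lemA12} immediately controls the unweighted $L^2$-norm:
$$
\|u(t)\|_2^2 \le 2\,\mathcal{I}(u(t)) \le 2\,C_1^2\,(T_e-t)^{2/(2-p)}\ , \qquad t\in (0,T_e]\ .
$$
Next, \eqref{db2} shows that, for each $t\in (0,T_e)$, the function $x\mapsto u(t,x)$ satisfies $\|\nabla\log u(t)\|_\infty \le K(t)$ with $K(t):=C_0\bigl(1+\|u_0\|_\infty^{(2-p)/p}t^{-1/p}\bigr)$, and a short computation (distinguishing $t\le T_e/2$ from $t\in(T_e/2,T_e)$, and recalling that $T_e$ depends only on $(N,p,u_0)$) yields the cleaner bound $K(t)\le C(N,p,u_0)\,t^{-1/p}$ for all $t\in (0,T_e)$.

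To prove \eqref{db14} I would turn the gradient bound into a pointwise lower bound on $u(t,\cdot)$. Integrating $|\nabla\log u(t)|$ along the segment from $0$ to an arbitrary $y\in\real^N$ and using the radial monotonicity \eqref{db3} to identify $u(t,0)=\|u(t)\|_\infty$, one obtains
$$
u(t,y) \ge \|u(t)\|_\infty\, e^{-K(t)|y|}\ , \qquad y\in\real^N\ .
$$
Squaring and integrating over $\real^N$ yields $\|u(t)\|_2^2 \ge c_N\,K(t)^{-N}\,\|u(t)\|_\infty^2$ for an explicit dimensional constant $c_N>0$, whence $\|u(t)\|_\infty \le c_N^{-1/2}\,K(t)^{N/2}\,\|u(t)\|_2$. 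Inserting the two bounds from the previous paragraph gives $\|u(t)\|_\infty \le C\,(T_e-t)^{1/(2-p)}\,t^{-N/2p}$, which is \eqref{db14}.

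Finally, \eqref{db15} follows at once: since $\nabla u=u\,\nabla\log u$ almost everywhere, $\|\nabla u(t)\|_\infty \le \|u(t)\|_\infty\,\|\nabla\log u(t)\|_\infty \le \|u(t)\|_\infty\,K(t)$, and combining \eqref{db14} with $K(t)\le C\,t^{-1/p}$ produces the exponent $t^{-N/2p-1/p}=t^{-(N+2)/2p}$, as required. The argument has no genuine obstacle; the only points needing a little care are the uniform reduction of $K(t)$ to $C\,t^{-1/p}$ on $(0,T_e)$ and the bookkeeping of the constants' dependence on $(N,p,u_0)$. It is worth noting, however, that the sharp extinction rate $(T_e-t)^{1/(2-p)}$ appearing here really stems from the weighted $L^2$-bound of Proposition~\ref{lemA12}: a plain $L^2$–$L^\infty$ smoothing estimate for the $p$-Laplacian, which one might be tempted to use via comparison because the absorption term makes $u$ a subsolution of $\partial_t u-\Delta_p u=0$, does not capture it.
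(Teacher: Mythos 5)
Your argument is correct, and it uses exactly the same two inputs as the paper --- the weighted $L^2$ decay of Proposition~\ref{lemA12} and the logarithmic gradient bound \eqref{db2} --- but the interpolation step in the middle is carried out differently. The paper invokes the Gagliardo--Nirenberg inequality $\|u\|_\infty \le C\|\nabla u\|_\infty^{N/(N+2)}\|u\|_2^{2/(N+2)}$, bounds $\|\nabla u(t)\|_\infty$ by $K(t)\|u(t)\|_\infty$ via \eqref{db2}, and then absorbs the resulting factor $\|u(t)\|_\infty^{N/(N+2)}$ into the left-hand side; \eqref{db15} then follows from \eqref{db2} and \eqref{db14} exactly as in your last step. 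You instead exploit the pointwise form of \eqref{db2} together with the positivity of $u$ and the radial monotonicity \eqref{db3} to get the Harnack-type lower bound $u(t,y)\ge \|u(t)\|_\infty e^{-K(t)|y|}$, and integrate it to compare $\|u(t)\|_2$ with $\|u(t)\|_\infty$ directly. Both routes are equally short and give the same exponents; yours is slightly more self-contained (no citation of an $L^\infty$--$W^{1,\infty}$--$L^2$ interpolation inequality, no absorption step) at the cost of using the extra structural facts that $u(t,\cdot)>0$ everywhere and attains its maximum at the origin --- both of which are indeed available here by \eqref{db1} and \eqref{db3}. The two housekeeping points you flag (reducing $K(t)$ to $C\,t^{-1/p}$ on $(0,T_e)$ using that $T_e=T_e(N,p,u_0)$, and writing $\nabla u = u\,\nabla\log u$ where $u>0$) are handled correctly.
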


\begin{proof}
Let $t\in (0,T_e)$. Observing that $2\mathcal{I}(u(t)) \ge \|u(t)\|_2^2$, we infer from the Gagliardo-Nirenberg inequality, \eqref{db2}, and \eqref{db6} that
\begin{align*}
\|u(t)\|_\infty & \le C \|\nabla u(t)\|_\infty^{N/(N+2)}
\|u(t)\|_2^{2/(N+2)} \\&
 \le C C_0^{N/(N+2)} \left( 1 + \|u_0\|_\infty t^{-1/p} \right)^{N/(N+2)} \|u(t)\|_\infty^{N/(N+2)} \left( 2\mathcal{I}(u(t)) \right)^{1/(N+2)} \\
 & \le C t^{-N/p(N+2)} \|u(t)\|_\infty^{N/(N+2)} (T_e-t)^{2/(2-p)(N+2)}\ ,
\end{align*}
from which we deduce \eqref{db14}. The bound \eqref{db15} is next a straightforward consequence of  \eqref{db2} and \eqref{db14}.
\end{proof}

\subsection{Temporal estimates: lower bound}\label{s3.15}

We now derive a lower bound for $\|u(t)\|_\infty$ which is of the same order as \eqref{db14}, thus revealing that we have identified the extinction rate.

\begin{proposition}\label{propZ1}
There is a positive constant $C_3=C_3(N,p,u_0)$ such that
$$
\|u(t)\|_\infty \ge C_3 (T_e-t)^{1/(2-p)}\ , \qquad t\in (0,T_e)\ .
$$
\end{proposition}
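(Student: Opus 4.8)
The plan is to prove the lower bound $\|u(t)\|_\infty \ge C_3 (T_e-t)^{1/(2-p)}$ by exploiting the same variational structure used for the upper bounds, only now run in the opposite direction. The key observation is that $\|u(t)\|_\infty$ controls both $\mathcal{I}(u(t))$ and $\mathcal{J}(u(t))$ from above on a fixed bounded region, thanks to the spatial decay \eqref{db4}, while the differential equality \eqref{z1} forces $\mathcal{I}(u)$ to decay no faster than a certain power of $T_e-t$ near extinction.

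First I would establish that $\mathcal{I}(u(t)) \le C (T_e - t)^{2/(2-p)}$ fails to be the whole story: more useful is a \emph{lower} bound on $\mathcal{I}(u(t))$. To get this, note from \eqref{z1} and \eqref{db13} (which gives $\mathcal{J}(u(t)) \le r_0\, \mathcal{I}(u(t))^{p/2}$ on $[T_e/2,T_e]$) that
\begin{equation*}
\frac{d}{dt}\mathcal{I}(u(t)) = -p\,\mathcal{J}(u(t)) \ge -p\, r_0\, \mathcal{I}(u(t))^{p/2}\ ,
\end{equation*}
which, after integration as in the proof of Proposition~\ref{lemA12}, already yields the matching two-sided estimate on $\mathcal{I}(u(t))$; but to turn this into a lower bound on $\|u(t)\|_\infty$ I would instead bound $\mathcal{I}(u(t))$ \emph{above} in terms of $\|u(t)\|_\infty$. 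Using \eqref{db4}, for any $R>0$,
\begin{equation*}
2\mathcal{I}(u(t)) = \int_{\mathbb{R}^N} e^{|x|} u(t,x)^2\,dx \le \|u(t)\|_\infty^2 \int_{|x|\le R} e^{|x|}\,dx + \kappa_0^2 \int_{|x|\ge R} e^{|x|} e^{-2|x|/(p-1)}\,dx\ ,
\end{equation*}
and since $2/(p-1)>2>1$ (because $p<2$, so $p-1<1$) the second integral is finite and decays like $e^{-(2/(p-1)-1)R}$ as $R\to\infty$; balancing the two terms by choosing $R = R(t)$ comparable to $|\log\|u(t)\|_\infty|$ gives a bound of the form $\mathcal{I}(u(t)) \le C\,\|u(t)\|_\infty^2\,(1+|\log\|u(t)\|_\infty|)^N$, hence $\mathcal{I}(u(t)) \le C_\eta\,\|u(t)\|_\infty^{2-\eta}$ for any $\eta>0$. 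Combined with the lower bound $\mathcal{I}(u(t))^{(2-p)/2} \ge c\,(T_e-t)$ obtained from the differential inequality above (integrated from $t$ to $T_e$, using $\mathcal{I}(u(T_e))=0$ and the fact that $\mathcal{I}(u(t))$ cannot vanish before $T_e$ by \eqref{db1}), this gives $\|u(t)\|_\infty^{(2-\eta)(2-p)/2} \ge c'\,(T_e-t)$, which is the desired estimate up to adjusting the exponent; choosing $\eta$ small and absorbing it proves the claim near $T_e$, and for $t$ away from $T_e$ the bound is trivial since $\|u(t)\|_\infty$ is bounded below on compact subintervals of $(0,T_e)$ by \eqref{db1} and continuity.

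Actually, a cleaner route avoids the logarithmic loss: since $u(t,\cdot)$ is radially non-increasing with $u(t,x)\le \kappa_0 e^{-|x|/(p-1)}$, the profile lies below the \emph{truncated} comparison $\min\{\|u(t)\|_\infty, \kappa_0 e^{-|x|/(p-1)}\}$, and integrating $e^{|x|}$ against the square of this explicit function gives $\mathcal{I}(u(t)) \le C\,\|u(t)\|_\infty^{2}\,(1+|\log(\|u(t)\|_\infty/\kappa_0)|^N)$ as above, or — if one is content with a slightly worse but still power-type bound — one simply writes $e^{|x|}u(t,x)^2 \le \|u(t)\|_\infty^{2-s} \cdot u(t,x)^s e^{|x|}$ and uses \eqref{db4} to bound $\int u(t,x)^s e^{|x|}dx \le \kappa_0^s \int e^{|x|} e^{-s|x|/(p-1)}dx < \infty$ whenever $s/(p-1)>1$, i.e. $s>p-1$; since $p-1<1$ one may take $s\in(p-1,1)$, yielding $\mathcal{I}(u(t)) \le C\,\|u(t)\|_\infty^{2-s}$ with $2-s<2-(p-1)=3-p$ but $2-s>1$, which suffices. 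The main obstacle is purely bookkeeping: one must verify that the constant $r_0$ in \eqref{db13} and the finiteness of the weighted integrals are genuinely uniform and that the differential inequality for $\mathcal{I}(u)^{(2-p)/2}$ can be integrated up to $T_e$ — all of which follows from Lemma~\ref{lemA120} and Proposition~\ref{lemA12} already in hand. I expect no serious difficulty beyond choosing the exponent $s$ (or the radius $R$) optimally so that the power of $(T_e-t)$ comes out to exactly $1/(2-p)$.
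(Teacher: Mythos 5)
There are two genuine gaps, and each is fatal on its own. First, the direction of your key differential inequality is backwards. From \eqref{z1} and \eqref{db13} you correctly get $\frac{d}{dt}\mathcal{I}(u(t)) = -p\,\mathcal{J}(u(t)) \ge -p\,r_0\,\mathcal{I}(u(t))^{p/2}$, but integrating $\frac{d}{dt}\mathcal{I}(u)^{(2-p)/2} \ge -\tfrac{p(2-p)}{2}r_0$ over $(t,T_e)$ with $\mathcal{I}(u(T_e))=0$ yields $\mathcal{I}(u(t))^{(2-p)/2} \le \tfrac{p(2-p)}{2}r_0(T_e-t)$ --- the \emph{upper} bound already contained in Proposition~\ref{lemA12}, not the lower bound $\mathcal{I}(u(t))^{(2-p)/2}\ge c(T_e-t)$ you claim. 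To obtain that lower bound you would need the \emph{reverse} inequality $\mathcal{J}(u(t)) \ge c\,\mathcal{I}(u(t))^{p/2}$, a weighted Poincar\'e-type estimate that you never state or prove (it can in fact be extracted from the radial computation behind Lemma~\ref{lemA16}, but you do not invoke it, and attributing the lower bound to \eqref{db13} is simply wrong).

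Second, even granting $\mathcal{I}(u(t)) \ge c\,(T_e-t)^{2/(2-p)}$, your conversion to a bound on $\|u(t)\|_\infty$ loses a power that cannot be recovered. Your own computation shows $\mathcal{I}(u(t)) \le C\,\|u(t)\|_\infty^{2-s}$ requires $s>p-1>0$; the optimal cutoff $R\sim (p-1)\log(\kappa_0/\|u(t)\|_\infty)$ gives at best $\mathcal{I}(u(t)) \le C\,\|u(t)\|_\infty^{3-p}\,\bigl(1+\bigl|\log\|u(t)\|_\infty\bigr|\bigr)^{N-1}$, because the weight $e^{|x|}$ grows while $\|u(t)\|_\infty$ only controls $u$ pointwise and the tail is merely $e^{-|x|/(p-1)}$. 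This yields $\|u(t)\|_\infty \gtrsim (T_e-t)^{2/[(2-p)(2-s)]}$ with exponent strictly larger than $1/(2-p)$, i.e.\ a strictly weaker bound as $t\to T_e$; there is no choice of $s$, $R$, or $\eta$ that makes the exponent come out to $1/(2-p)$, and the constant $C_\eta$ blows up as $\eta\to 0$, so the loss cannot be ``absorbed''. The paper avoids this mismatch entirely by a different mechanism: it combines the smoothing estimate \eqref{y1}, the Gagliardo--Nirenberg inequality $\|u\|_\infty^{N+1}\le C\|\nabla u\|_\infty^N\|u\|_1$, the interpolation of $\|u(t)\|_1$ between $\|u(t)\|_\infty$ and $\mathcal{I}(u(t))^{1/2}$ together with the \emph{upper} bound of Proposition~\ref{lemA12}, and then a differential inequality for the integral quantity $\tau(s)=\int_s^{T_e}\|u(t)\|_\infty^{(p-1)(N+\vartheta)/2N}\,dt$. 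If you want to salvage your strategy, you must both prove the reverse Poincar\'e inequality and find a way around the $\|u\|_\infty^{p-1}$ loss in comparing $\mathcal{I}$ with $\|u\|_\infty^2$; as written, neither step is there.
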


\begin{proof}
We first recall that, by \cite[Theorem~1.7]{IaLa12}, there is a positive constant $C$ depending only on $N$ and $p$ such that
\begin{equation}
\|\nabla u(t)\|_\infty \le C \|u(s)\|_\infty^{1/(p-1)} (t-s)^{-1/(p-1)}\ , \qquad 0 \le s < t\ . \label{y1}
\end{equation}
On the one hand we infer from \eqref{y1} and the Gagliardo-Nirenberg inequality that, for $s\ge 0$ and $t>s$,
\begin{align}
\|u(t)\|_\infty^{N+1} & \le C \|\nabla u(t)\|_\infty^N \|u(t)\|_1\ , \nonumber \\
\|u(t)\|_\infty^{N+1} & \le C \|u(s)\|_\infty^{N/(p-1)} (t-s)^{-N/(p-1)} \|u(t)\|_1\ . \label{y2}
\end{align}
On the other hand we fix $\vartheta\in (0,1]$ such that   $\vartheta<N(2-p)/(p-1)$ and deduce from H\"older's inequality that, for $t\ge 0$,
\begin{align*}
\|u(t)\|_1 & \le \| u(t)\|_\infty^{1-\vartheta} \int_{\real^N} u(t,x)^\vartheta\ dx \\
& \le \| u(t)\|_\infty^{1-\vartheta} \int_{\real^N} \left[ e^{\vartheta |x|/2} u(t,x)^\vartheta e^{-\vartheta|x|/2} \right] \ dx \\
& \le \| u(t)\|_\infty^{1-\vartheta} (2\mathcal{I}(u(t)))^{\vartheta/2} \left( \int_{\real^N} e^{-\vartheta |x|/(2-\vartheta)} \ dx \right)^{(2-\vartheta)/2} \\
& \le C(\vartheta) \| u(t)\|_\infty^{1-\vartheta} \mathcal{I}(u(t))^{\vartheta/2}\ ,
\end{align*}
hence, by Proposition~\ref{lemA12},
\begin{equation}
\|u(t)\|_1 \le C(\vartheta) \| u(t)\|_\infty^{1-\vartheta} (T_e-t)^{\vartheta/(2-p)}\ . \label{y3}
\end{equation}
Combining \eqref{y2} and \eqref{y3} gives
$$
\| u(t)\|_\infty^{N+\vartheta} \le C(\vartheta) \|u(s)\|_\infty^{N/(p-1)} (t-s)^{-N/(p-1)} (T_e-t)^{\vartheta/(2-p)}
$$
for $s\ge 0$ and $t>s$. Now, for $s\in (0,T_e)$, we set
$$
\tau(s) := \int_s^{T_e} \|u(t)\|_\infty^{(p-1)(N+\vartheta)/2N}\ dt\ ,
$$
and infer from the previous inequality that
$$
\tau(s) \le C(\vartheta) \left( \int_s^{T_e} (t-s)^{-1/2} (T_e-t)^{\vartheta(p-1)/2N(2-p)}\ dt \right) \left( - \tau'(s) \right)^{N/(p-1)(N+\vartheta)}\ .
$$
Since $t>s$ there holds $T_e-s>T_e-t$ and we obtain
\begin{align*}
\tau(s) & \le C(\vartheta) \left( \int_s^{T_e} (t-s)^{-1/2} \ dt \right) (T_e-s)^{\vartheta(p-1)/2N(2-p)} \left( - \tau'(s) \right)^{N/(p-1)(N+\vartheta)} \\
& \le C(\vartheta) (T_e-s)^{[N(2-p)+\vartheta(p-1)]/2N(2-p)} \left( - \tau'(s) \right)^{N/(p-1)(N+\vartheta)}\ .
\end{align*}
Consequently,
$$
\tau'(s) \tau(s)^{-(p-1)(N+\vartheta)/N} + C(\vartheta) (T_e-s)^{-[(p-1)(N+\vartheta)(N(2-p)+\vartheta(p-1))]/2 N^2 (2-p)} \le 0\ .
$$
Owing to the choice of $\vartheta$ there holds
$$
\frac{(p-1)(N+\vartheta)}{N}<1
$$
and
$$
\frac{(p-1)(N+\vartheta)[N(2-p)+\vartheta(p-1)]}{2 N^2 (2-p)} = \frac{(p-1)(N+\vartheta)}{N} \left( \frac{1}{2} + \frac{(p-1)\vartheta}{2N(2-p)} \right) < 1 \ .
$$
We may thus integrate the previous differential inequality over $(s,T_e)$, $s\in (0,T_e)$, and find, since $\tau(T_e)=0$,
$$
-\tau(s)^{[N(2-p)-\vartheta(p-1)]/N} + C(\vartheta) (T_e-s)^{[(N(2-p)-\vartheta(p-1))(2N(2-p)+(N+\vartheta)(p-1))]/2 N^2 (2-p)} \le 0\ ,
$$
or equivalently
$$
C(\vartheta) (T_e-s)^{[2N(2-p)+(N+\vartheta)(p-1)]/2N(2-p)} \le \tau(s)\ .
$$
Recalling that $t\mapsto \|u(t)\|_\infty$ is non-increasing by the comparison principle, we obtain
$$
\tau(s) \le (T_e-s) \|u(s)\|_\infty^{(p-1)(N+\vartheta)/2N}\ ,
$$
which gives, together with the previous lower bound on $\tau(s)$,
$$
C(\vartheta) (T_e-s)^{(p-1)(N+\vartheta)/2N(2-p)} \le \|u(s)\|_\infty^{(p-1)(N+\vartheta)/2N}\ ,
$$
thereby ending the proof.
\end{proof}

\subsection{Self-similar variables}\label{s3.2}

To study the convergence of $u$ to self-similarity we use the
classical transformation in self-similar variables and introduce the
function $v$ defined by
\begin{equation}
u(t,x) = \left( (2-p)(T_e-t) \right)^{1/(2-p)} v \left( -
\frac{1}{2-p} \log{\left( \frac{T_e-t}{T_e} \right)} , x \right) \label{db16}
\end{equation}
for $(t,x)\in (0,T_e)\times\mathbb{R}^N$ as well as the new time variable
$$
s := - \frac{1}{2-p} \log{\left( \frac{T_e-t}{T_e} \right)} \in
(0,\infty)\ .
$$
By \eqref{eq1}-\eqref{eq2}, the function $v$ solves
\begin{eqnarray}
\partial_s v - \Delta_p v + |\nabla v|^{p-1} - v & = & 0 \ , \qquad (s,x)\in (0,\infty)\times \mathbb{R}^N\ , \label{rdhjca} \\
v(0) & = & v_0 := \left( (2-p)T_e \right)^{1/(2-p)} u_0\ , \qquad
x\in\mathbb{R}^N\ . \label{rdhjcain}
\end{eqnarray}

We first translate the outcome of Section~\ref{s3.1} in terms of $v$ and deduce from Proposition~\ref{lemA12}, Corollary~\ref{corA13}, Proposition~\ref{propZ1}, and \eqref{db16} the following estimates.

\begin{lemma}\label{lemA14}
There is $C_4=C_4(N,p,u_0)>1$ such that, for $s\in (0,\infty)$,
\begin{eqnarray}
\mathcal{I}(v(s)) + \left( 1 - e^{-(2-p)s} \right) \mathcal{J}(v(s)) & \le & C_4\ , \label{db17} \\
\left( 1 - e^{-(2-p)s} \right)^{N/2p} \|v(s)\|_\infty + \left( 1 -
e^{-(2-p)s} \right)^{(N+2)/2p} \|\nabla v(s)\|_\infty & \le & C_4\ ,
\label{db18}
\end{eqnarray}
and
\begin{equation}
\|v(s)\|_\infty \ge \frac{1}{C_4}\ . \label{db175}
\end{equation}
\end{lemma}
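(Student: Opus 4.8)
The plan is to translate each of the three estimates in Lemma~\ref{lemA14} directly from the corresponding bound in Section~\ref{s3.1} using the change of variables \eqref{db16}. First I would record the relation between the old time $t\in(0,T_e)$ and the new time $s\in(0,\infty)$: from $s=-\frac{1}{2-p}\log((T_e-t)/T_e)$ one has $T_e-t=T_e\,e^{-(2-p)s}$, hence $(2-p)(T_e-t)=(2-p)T_e\,e^{-(2-p)s}$, and the multiplicative prefactor in \eqref{db16} equals $\left((2-p)T_e\right)^{1/(2-p)}e^{-s}$. Since $v(s,x)=\left((2-p)(T_e-t)\right)^{-1/(2-p)}u(t,x)$, the functionals $\mathcal{I}$ and $\mathcal{J}$ defined in \eqref{db5}, being respectively $2$-homogeneous and $p$-homogeneous, scale as $\mathcal{I}(v(s))=\left((2-p)(T_e-t)\right)^{-2/(2-p)}\mathcal{I}(u(t))$ and $\mathcal{J}(v(s))=\left((2-p)(T_e-t)\right)^{-p/(2-p)}\mathcal{J}(u(t))$.

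Next I would insert these identities into Proposition~\ref{lemA12}. Squaring \eqref{db6} gives $\mathcal{I}(u(t))\le C_1^2(T_e-t)^{2/(2-p)}$, so $\mathcal{I}(v(s))\le C_1^2(2-p)^{-2/(2-p)}$, a constant; similarly $t\,\mathcal{J}(u(t))\le C_1^p(T_e-t)^{p/(2-p)}$ yields $t\,\mathcal{J}(v(s))\le C_1^p(2-p)^{-p/(2-p)}$. To obtain the weight $1-e^{-(2-p)s}$ appearing in \eqref{db17} and \eqref{db18} in place of $t$, I would observe that $t=T_e(1-e^{-(2-p)s})$, so $t\,\mathcal{J}(v(s))$ differs from $(1-e^{-(2-p)s})\mathcal{J}(v(s))$ only by the fixed factor $T_e$; absorbing all such constants into a single $C_4>1$ gives \eqref{db17}. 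For \eqref{db18} I would argue identically starting from \eqref{db14}--\eqref{db15} in Corollary~\ref{corA13}: since $\|u(t)\|_\infty=\left((2-p)(T_e-t)\right)^{1/(2-p)}\|v(s)\|_\infty$ and $\|\nabla u(t)\|_\infty=\left((2-p)(T_e-t)\right)^{1/(2-p)}\|\nabla v(s)\|_\infty$, dividing \eqref{db14} by $\left((2-p)(T_e-t)\right)^{1/(2-p)}$ gives $\|v(s)\|_\infty\le C\,t^{-N/2p}$, and replacing $t$ by $T_e(1-e^{-(2-p)s})$ produces the first term of \eqref{db18}; the gradient term is handled the same way using \eqref{db15}. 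Finally, \eqref{db175} is immediate from Proposition~\ref{propZ1}: dividing $\|u(t)\|_\infty\ge C_3(T_e-t)^{1/(2-p)}$ by $\left((2-p)(T_e-t)\right)^{1/(2-p)}$ gives $\|v(s)\|_\infty\ge C_3(2-p)^{-1/(2-p)}$, a positive constant independent of $s$.

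There is essentially no obstacle here: the lemma is a bookkeeping consequence of the homogeneity of $\mathcal{I}$, $\mathcal{J}$ and the $L^\infty$-norms under the rescaling \eqref{db16}, combined with the elementary identities $T_e-t=T_e e^{-(2-p)s}$ and $t=T_e(1-e^{-(2-p)s})$. The only mild care needed is to check that all the $(T_e-t)$-powers cancel exactly against the self-similar prefactor — which they do by the very choice of exponent $1/(2-p)$ in \eqref{db16} — and to collect the finitely many structural constants ($C_1$, $C_2$, $C_3$, powers of $2-p$ and of $T_e$) into the single constant $C_4=C_4(N,p,u_0)>1$, enlarging it if necessary so that \eqref{db175} holds with $1/C_4$ on the right.
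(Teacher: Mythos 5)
Your proof is correct and follows exactly the route the paper intends: the paper states Lemma~\ref{lemA14} as a direct translation of Proposition~\ref{lemA12}, Corollary~\ref{corA13}, and Proposition~\ref{propZ1} through the change of variables \eqref{db16}, without writing out the details you supply. The homogeneity bookkeeping, the identities $T_e-t=T_e e^{-(2-p)s}$ and $t=T_e\left(1-e^{-(2-p)s}\right)$, and the absorption of all structural constants into $C_4$ are precisely what is needed.
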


We next exploit the variational structure of \eqref{rdhjca} in the
radially symmetric and monotone setting and investigate the behavior
of the energy
\begin{equation}
\mathcal{E} := \mathcal{J} - \mathcal{I}\ , \label{db19}
\end{equation}
along the trajectory $\{v(s)\ :\ s\ge 0\}$, recalling that
$\mathcal{I}$ and $\mathcal{J}$ are defined in \eqref{db5}.

\begin{lemma}\label{lemA15}
The energy $s\mapsto \mathcal{E}(v(s))$ is a non-negative and
non-increasing function in $(0,\infty)$. In addition,
\begin{equation}
\int_0^\infty \int_{\mathbb{R}^N} e^{|x|} |\partial_s v(s,x)|^2\
dxds < \infty\ . \label{db20}
\end{equation}
\end{lemma}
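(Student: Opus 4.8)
The plan is to compute the time derivative of $s\mapsto\mathcal{E}(v(s))$ along trajectories and recognize it as minus a dissipation term, then argue non-negativity separately. First I would establish the differential identity: multiplying \eqref{rdhjca} by $e^{|x|}\partial_s v$ and integrating over $\mathbb{R}^N$, the diffusion term $-\Delta_p v$ contributes $\frac{d}{ds}\mathcal{J}(v(s))$ after integration by parts (using $x\cdot\nabla v\le 0$ so that the boundary-type term from differentiating $e^{|x|}$ has a sign and pairs correctly with $|\nabla v|^{p-1}$, exactly as in the computation preceding \eqref{db12}), the zeroth-order term $-v$ contributes $-\frac{d}{ds}\mathcal{I}(v(s))$, and the absorption term combines with the weight derivative. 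The upshot should be
$$
\frac{d}{ds}\mathcal{E}(v(s)) = -\int_{\mathbb{R}^N} e^{|x|}\,|\partial_s v(s,x)|^2\,dx = -\mathcal{D}(v(s))\le 0,
$$
so $\mathcal{E}(v(\cdot))$ is non-increasing on $(0,\infty)$. As in the proof of Lemma~\ref{lemA120} (whose rigorous justification is deferred), this formal computation needs the usual approximation/truncation to handle the degeneracy of $\Delta_p$ and the unbounded weight $e^{|x|}$; I would either invoke the same machinery or record that \eqref{z1}--\eqref{z2}, already granted, give the integrated form $\mathcal{E}(v(s_2)) + \int_{s_1}^{s_2}\mathcal{D}(v(\sigma))\,d\sigma \le \mathcal{E}(v(s_1))$ directly after rescaling.

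Integrating this inequality over $(s_1,s_2)\subset(0,\infty)$ and letting $s_2\to\infty$ would yield \eqref{db20}, \emph{provided} we know $\mathcal{E}(v(\cdot))$ is bounded below; non-negativity is more than enough. So the remaining point is to show $\mathcal{E}(v(s))\ge 0$ for all $s>0$. Here I would exploit the scaling/homogeneity structure: for $\theta>0$ consider $w_\theta := \theta\, v(s,\cdot)$ and the fact that, by \eqref{rdhjca}, along the flow $\frac{d}{ds}\mathcal{I}(v(s)) = 2\big(\mathcal{I}(v(s)) - p\,\mathcal{J}(v(s))/\,?\big)$ — more precisely, pairing \eqref{rdhjca} with $e^{|x|}v$ gives $\frac{1}{2}\frac{d}{ds}\mathcal{I}(v) = \mathcal{I}(v) - p\,\mathcal{J}(v) \cdot\frac{?}{?}$; I would pin down the exact constant, but the structural point is that $\frac{d}{ds}\mathcal{I}(v(s)) = 2\mathcal{I}(v(s)) - 2p\,\mathcal{J}(v(s))$ up to normalization, i.e.\ $\frac{d}{ds}\mathcal{I}(v(s)) = -2\big(p\mathcal{J}(v(s)) - \mathcal{I}(v(s))\big)$. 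If $\mathcal{E}(v(s_0)) = \mathcal{J}(v(s_0)) - \mathcal{I}(v(s_0)) < 0$ at some $s_0$, then since $p<2$ we have $p\mathcal{J}(v(s_0)) - \mathcal{I}(v(s_0)) < p\big(\mathcal{J}(v(s_0)) - \mathcal{I}(v(s_0))\big) < 0$, hence $\mathcal{I}(v(s))$ is strictly increasing near $s_0$; combined with monotonicity of $\mathcal{E}$ this would force $\mathcal{I}(v(s))\to\infty$, or at least stay bounded below away from $0$ while $\mathcal{E}$ keeps decreasing, contradicting the uniform bound \eqref{db17}. I would turn this into a clean argument: from $\mathcal{E}$ non-increasing, if $\mathcal{E}(v(s_0))=-\delta<0$ then $\mathcal{E}(v(s))\le-\delta$ for $s\ge s_0$, so $p\mathcal{J}(v(s))-\mathcal{I}(v(s)) = (p-1)\mathcal{J}(v(s)) + \mathcal{E}(v(s)) \cdot(\text{sign issues})$ — better: $p\mathcal{J}-\mathcal{I} = \mathcal{J}-\mathcal{I} + (p-1)\mathcal{J} = \mathcal{E} + (p-1)\mathcal{J}$, and $\mathcal{J}\ge \mathcal{E} + \mathcal{I} \ge -\delta$ is not enough, so instead I would use $\mathcal{J} = \mathcal{E}+\mathcal{I}$ to write $p\mathcal{J}-\mathcal{I} = p\mathcal{E} + (p-1)\mathcal{I} \le p\mathcal{E} \le -p\delta<0$ once $\mathcal{E}\le-\delta$, whence $\frac{d}{ds}\mathcal{I}(v(s))\ge 2p\delta>0$ for all $s\ge s_0$, so $\mathcal{I}(v(s))\ge \mathcal{I}(v(s_0)) + 2p\delta(s-s_0)\to\infty$, contradicting \eqref{db17}. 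Therefore $\mathcal{E}(v(s))\ge 0$ throughout.

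The main obstacle is the rigorous justification of the energy identity $\frac{d}{ds}\mathcal{E}(v(s)) = -\mathcal{D}(v(s))$ — equivalently, of \eqref{z1}--\eqref{z2} — in the presence of the singular $p$-Laplacian and the exponentially growing weight; this is precisely why Lemma~\ref{lemA120} is stated with a deferred proof, and I would simply cite that lemma (together with \eqref{db16}) to transfer \eqref{z1}--\eqref{z2} to the self-similar variable, obtaining $\mathcal{E}(v(s_2)) + \int_{s_1}^{s_2}\mathcal{D}(v(\sigma))\,d\sigma \le \mathcal{E}(v(s_1))$ for $0<s_1<s_2$. Everything else — the sign of $\mathcal{E}$ via the $\mathcal{I}$-monotonicity trick above, and the extraction of \eqref{db20} by letting $s_2\to\infty$ in the integrated dissipation inequality — is then elementary, using only $p<2$ and the uniform bounds of Lemma~\ref{lemA14}.
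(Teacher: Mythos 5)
Your proposal follows the paper's proof in all three steps: the formal identity $\frac{d}{ds}\mathcal{E}(v(s))=-\int e^{|x|}|\partial_s v|^2\,dx$ justified by the same approximation machinery as in Lemma~\ref{lemA120}, non-negativity of $\mathcal{E}$ by contradiction through forced growth of $\mathcal{I}(v)$ against the uniform bound \eqref{db17}, and \eqref{db20} by integrating the dissipation inequality once $\mathcal{E}\ge 0$ is known. However, the algebra in the non-negativity step is wrong as written. Pairing \eqref{rdhjca} with $e^{|x|}v$ and using the radial monotonicity (the absorption term cancels against the contribution of the weight, exactly as in the computation preceding \eqref{db12}) gives
\begin{equation*}
\frac{d}{ds}\mathcal{I}(v(s)) = -p\,\mathcal{J}(v(s))+2\,\mathcal{I}(v(s)) = (2-p)\,\mathcal{I}(v(s)) - p\,\mathcal{E}(v(s))\ ,
\end{equation*}
not $-2p\mathcal{J}+2\mathcal{I}$. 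Since $2-p>0$ and $\mathcal{I}\ge 0$, if $\mathcal{E}(v(s))\le-\delta$ for $s\ge s_0$ this right-hand side is bounded below by $p\delta>0$, which forces $\mathcal{I}(v(s))\to\infty$ and contradicts \eqref{db17}; this is exactly where the hypothesis $p<2$ enters. Your substitute inequality $p\mathcal{J}-\mathcal{I}=p\mathcal{E}+(p-1)\mathcal{I}\le p\mathcal{E}$ is false, because $(p-1)\mathcal{I}\ge 0$ makes the inequality go the other way: the quantity $p\mathcal{E}+(p-1)\mathcal{I}$ has no definite sign when $\mathcal{E}<0$, so with your normalization the claim $\frac{d}{ds}\mathcal{I}(v(s))>0$ does not follow. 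Once the coefficient is corrected the contradiction argument closes as in the paper, and the rest of your proof (monotonicity of $\mathcal{E}$ and the passage to \eqref{db20}) is fine.
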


\begin{proof}
Let us begin with the monotonicity of the energy. We only give a formal proof below as it can be justified rigorously by arguing as in the proof of Lemma~\ref{lemA120}, see Section~\ref{s3.4}. Let
$s>0$. We infer from \eqref{rdhjca} and the monotonicity and symmetry properties of $v$ that
\begin{align*}
\frac{d}{ds} \mathcal{E}(v(s)) & = - \int_{\mathbb{R}^N} \left[ \mathrm{div}\left( e^{|x|} |\nabla v(s,x)|^{p-2} \nabla v(s,x) \right) + e^{|x|} v(s,x) \right] \partial_s v(s,x)\ dx \\
& = - \int_{\mathbb{R}^N} e^{|x|} \left[ \Delta_p v(s,x) + |\nabla v(s,x)|^{p-2} \nabla v(s,x) \cdot \frac{x}{|x|} + v(s,x) \right] \partial_s v(s,x)\ dx \\
& = - \int_{\mathbb{R}^N} e^{|x|} |\partial_s v(s,x)|^2\ dx\ ,
\end{align*}
from which the time monotonicity of the energy follows. Also,
\begin{equation}
\int_0^S \int_{\mathbb{R}^N} e^{|x|} |\partial_s v(s,x)|^2\ dxds \le
\mathcal{E}(v_0) - \mathcal{E}(v(S))\ , \qquad S>0\ . \label{db21}
\end{equation}

To establish the non-negativity of the energy, we adapt an argument
from \cite[Lemma~4.1]{dPSa02} and assume for contradiction that
there is $s_0>0$ such that $\mathcal{E}(v(s_0))<0$. On the one hand,
thanks to the just established monotonicity of the energy, we deduce
that
\begin{equation}
\mathcal{E}(v(s))<0\ , \qquad s\ge s_0\ . \label{db22}
\end{equation}
On the other hand, it follows from \eqref{rdhjca} and the
monotonicity and symmetry properties of $v$ that
\begin{align*}
\frac{d}{ds} \mathcal{I}(v(s)) & = -\int_{\mathbb{R}^N} e^{|x|} \left[ |\nabla v(s,x)|^p + |\nabla v(s,x)|^{p-2} \nabla v(s,x) \cdot \frac{x}{|x|} \right] v(s,x)\ dx \\
& \qquad + \int_{\mathbb{R}^N} e^{|x|} \left[ v(s,x) - |\nabla v(s,x)|^{p-1} \right] v(s,x)\ dx \\
& = -p \mathcal{J}(v(s)) + 2 \mathcal{I}(v(s)) \\
& = -p \mathcal{E}(v(s)) + \frac{2-p}{2} \mathcal{I}(v(s))\ .
\end{align*}
Recalling \eqref{db22} we infer from the above differential
inequality that
$$
\frac{d}{ds} \mathcal{I}(v(s)) \ge \frac{2-p}{2} \mathcal{I}(v(s)) +
p |\mathcal{E}(v(s_0))|\ , \qquad s\ge s_0\ ,
$$
which contradicts the boundedness \eqref{db17} of $\mathcal{I}(v)$.
Therefore, $\mathcal{E}(v(s))\ge 0$ for all $s\ge 0$. Furthermore,
the right-hand side of \eqref{db21} is bounded from above by
$\mathcal{E}(v_0)$, from which we deduce \eqref{db20}.
\end{proof}

\subsection{Convergence}\label{s3.3}

Thanks to the outcome of the previous two sections we are now in a
position to complete the proof of Theorem~\ref{th.conv}. The first
step is to show that the trajectory $\{v(s)\ :\ s\ge 0\}$ is compact
in a suitable space and that the $\omega$-limit set $\omega(v_0)$ in
that space contains only stationary solutions to \eqref{rdhjca}.
More precisely, we define $\omega(v_0)$ as follows:

\begin{definition}\label{definOL}
$w_*\in
\omega(v_0)$ if and only if
\begin{itemize}
\item[(a)] $w_*$ is a non-negative radially symmetric function in $W^{1,\infty}(\mathbb{R}^N)\cap L^2(\mathbb{R}^N;e^{|x|}\ dx)$ with a non-increasing profile and $\nabla w_*\in L^p(\mathbb{R}^N;e^{|x|}\ dx)$,
\item[(b)] and there is a sequence $(s_k)_{k\ge 1}$ of positive real numbers such that
\begin{equation}
\lim_{k\to\infty} s_k = \infty \;\;\text{ and }\;\;
\lim_{k\to\infty} \|v(s_k)-w_*\|_\infty= 0\ . \label{db23}
\end{equation}
\end{itemize}
\end{definition}

We first study the compactness properties of the trajectory $\{v(s)\
:\ s\ge 0\}$. As usual when dealing with unbounded domains, we begin
with a control of the behavior as $|x|\to\infty$.

\begin{lemma}\label{lemA16}
There is $C_5=C_5(N,p)$ such that, if $w$ is a radially symmetric
function in $L^2(\mathbb{R}^N;e^{|x|}\ dx)$ with $\nabla w \in
L^p(\mathbb{R}^N;e^{|x|}\ dx)$, then
\begin{eqnarray}
|w(x)| & \le & C_5\ |x|^{-(N-1)/p} e^{-|x|/p} \mathcal{J}(w)^{1/p}\
, \qquad x\in \mathbb{R}^N\ , \label{db24} \\ \int_{\{|x|\ge R\}}
e^{|x|} |w(x)|^2\ dx & \le & C_5\ \ell(R) \mathcal{J}(w)^{1/p}\ ,
\qquad R>0\ , \label{db25}
\end{eqnarray}
where
$$
\ell(R) := \int_R ^\infty r^{-(2-p)(N-1)/p} e^{-(2-p)r/p}\ dr\ ,
\qquad R>0\ .
$$
\end{lemma}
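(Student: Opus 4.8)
The plan is to establish the pointwise bound \eqref{db24} first, treating it as essentially a one-dimensional weighted Sobolev estimate in the radial variable, and then to derive the integral bound \eqref{db25} by integrating the square of \eqref{db24} against the weight $e^{|x|}$.

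For \eqref{db24}, write $w(x) = w(r)$ with $r = |x|$ and use the fundamental theorem of calculus together with the fact that $w$, being in $L^2(\mathbb{R}^N; e^{|x|}dx)$, cannot blow up at infinity; more precisely $w(r) = -\int_r^\infty w'(\rho)\,d\rho$ (at least along a sequence, the decay at infinity forcing the limit to vanish, which I would justify using that $\mathcal{J}(w) < \infty$). Then estimate
\begin{align*}
|w(r)| \le \int_r^\infty |w'(\rho)|\,d\rho = \int_r^\infty \left( |w'(\rho)|^p \rho^{N-1} e^{\rho} \right)^{1/p} \rho^{-(N-1)/p} e^{-\rho/p}\,d\rho
\end{align*}
and apply H\"older's inequality with exponents $p$ and $p/(p-1)$: the first factor integrates (up to the surface-measure constant $|\mathbb{S}^{N-1}|$) to a quantity bounded by $\mathcal{J}(w)^{1/p}$, while the second factor raised to the power $p/(p-1)$ is $\int_r^\infty \rho^{-(N-1)/(p-1)} e^{-\rho/(p-1)}\,d\rho$. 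The latter integral is, for $r$ away from zero, comparable to $r^{-(N-1)/(p-1)} e^{-r/(p-1)}$ by a standard Laplace-type asymptotic (integration by parts or the substitution $\rho = r + \sigma$), so raising to the power $(p-1)/p$ gives exactly the claimed factor $r^{-(N-1)/p} e^{-r/p}$. Some care is needed near $r=0$ where $r^{-(N-1)/p}$ is large, but there the bound is trivially satisfied after adjusting the constant since $|w(0)|$ is controlled by $\mathcal{J}(w)^{1/p}$ via the same argument on $[1,\infty)$ plus the dominated behavior; alternatively one restricts the stated estimate's content to what is actually needed. The constant $C_5$ depends only on $N$ and $p$ through $|\mathbb{S}^{N-1}|$ and the constant in the Laplace asymptotic.

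For \eqref{db25}, simply integrate: using \eqref{db24},
\begin{align*}
\int_{\{|x|\ge R\}} e^{|x|} |w(x)|^2\,dx \le C_5^2\, \mathcal{J}(w)^{2/p} |\mathbb{S}^{N-1}| \int_R^\infty e^{r} \cdot r^{-2(N-1)/p} e^{-2r/p} \cdot r^{N-1}\,dr,
\end{align*}
and the exponent of $r$ in the integrand is $N-1 - 2(N-1)/p = (N-1)(p-2)/p = -(2-p)(N-1)/p$ while the exponential is $e^{(1 - 2/p)r} = e^{-(2-p)r/p}$, so the $r$-integral is precisely $\ell(R)$. Relabeling constants ($C_5^2|\mathbb{S}^{N-1}|$ into a new $C_5$, and noting $\mathcal{J}(w)^{2/p}$ versus $\mathcal{J}(w)^{1/p}$ — here I would double-check the intended exponent in the statement and adjust, possibly it should read $\mathcal{J}(w)^{2/p}$) yields \eqref{db25}.

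The main obstacle I anticipate is the rigorous justification that $w(r) \to 0$ as $r \to \infty$, i.e.\ that the boundary term at infinity in the fundamental theorem of calculus vanishes, using only $w \in L^2(\mathbb{R}^N; e^{|x|}dx)$ and $\nabla w \in L^p(\mathbb{R}^N; e^{|x|}dx)$ rather than any a priori pointwise decay: one should argue that $\liminf_{r\to\infty} |w(r)| = 0$ from the $L^2$-integrability against the (large) weight $e^{|x|}$, and then upgrade this to a genuine limit using the H\"older estimate above, which shows $|w(r) - w(r')|$ is small for $r, r'$ large. The secondary technical point is the sharp Laplace asymptotic $\int_r^\infty \rho^{-\alpha} e^{-\rho/(p-1)}\,d\rho \le C(N,p)\, r^{-\alpha} e^{-r/(p-1)}$ for $r \ge 1$ (with $\alpha = (N-1)/(p-1)$), which is elementary — substitute $\rho = r+\sigma$ and bound $(r+\sigma)^{-\alpha} \le r^{-\alpha}$ for $\alpha \ge 0$, or $(r+\sigma)^{-\alpha} \le (1+\sigma/r)^{-\alpha} r^{-\alpha}$, leaving $\int_0^\infty e^{-\sigma/(p-1)}d\sigma = p-1$ — but must be handled with the appropriate uniformity in $r \ge R_0$ for some fixed $R_0$, the range $r \in (0, R_0)$ being absorbed into the constant.
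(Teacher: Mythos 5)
Your proof is correct and follows essentially the same route as the paper's: write $w(x)=-\int_{|x|}^\infty \partial_r w(\rho)\,d\rho$, apply H\"older's inequality with exponents $p$ and $p/(p-1)$ against the weight $\varrho(r)=r^{N-1}e^r$, use the elementary tail estimate $\int_r^\infty \varrho(\rho)^{-1/(p-1)}\,d\rho\le C\varrho(r)^{-1/(p-1)}$ to get \eqref{db24}, and then square and integrate to get \eqref{db25}. Your side remark about the exponent is also well taken: squaring \eqref{db24} produces $\mathcal{J}(w)^{2/p}$, so the $\mathcal{J}(w)^{1/p}$ appearing in \eqref{db25} is a typo, though a harmless one since $\mathcal{J}$ is bounded along the trajectory where the lemma is applied.
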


\begin{proof}
Let $x\in\mathbb{R}^N$, $x\ne 0$, and define $\varrho(r) := r^{N-1}
e^r$ for $r>0$. Since $w$ is radially symmetric, we infer from
H\"older's inequality that
\begin{align*}
|w(x)| & = \left| \int_{|x|}^\infty \partial_r w(r)\ dr \right| \le \int_{|x|}^\infty \left| \partial_r w(r)\right|\ dr  \\
& \le \left( \int_{|x|}^\infty \varrho(r)^{-1/(p-1)}\ dr \right)^{(p-1)/p} \left( \int_{|x|}^\infty \varrho(r) |\partial_r w(r)|^p\ dr \right)^{1/p} \\
& \le C \varrho(|x|)^{-1/p} \mathcal{J}(w)^{1/p}\ ,
\end{align*}
hence \eqref{db24}. The estimate \eqref{db25} is next a straightforward consequence of \eqref{db24}.
\end{proof}

\begin{lemma}\label{lemA17}
The trajectory $\{v(s)\ :\ s\ge 1\}$ is relatively compact in
$C(\mathbb{R}^N)$ and the set $\omega(v_0)$ is non-empty.
\end{lemma}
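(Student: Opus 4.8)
The plan is to establish relative compactness of $\{v(s) : s\ge 1\}$ in $C(\mathbb{R}^N)$ via the Arzel\`a--Ascoli theorem, supplemented by a uniform decay control at spatial infinity to handle the unbounded domain. First I would record the uniform bounds already at hand: by Lemma~\ref{lemA14}, for $s\ge 1$ we have $\|v(s)\|_\infty \le C$ and $\|\nabla v(s)\|_\infty \le C$ (the factors $1-e^{-(2-p)s}$ are bounded below on $[1,\infty)$), so the family is uniformly bounded and uniformly Lipschitz, hence uniformly bounded and equicontinuous on every compact subset of $\mathbb{R}^N$. This yields, by Arzel\`a--Ascoli and a diagonal argument over an exhaustion of $\mathbb{R}^N$ by balls, that any sequence $s_k\to\infty$ has a subsequence along which $v(s_k)$ converges in $C_{\mathrm{loc}}(\mathbb{R}^N)$ to some limit $w_*$, with $w_*\in W^{1,\infty}(\mathbb{R}^N)$, $w_*\ge 0$, radially symmetric with non-increasing profile (these properties pass to the limit from \eqref{db3} transported to the $v$-variables).

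Next I would upgrade $C_{\mathrm{loc}}$-convergence to convergence in $C(\mathbb{R}^N)$, i.e.\ uniform convergence on all of $\mathbb{R}^N$. This is exactly where Lemma~\ref{lemA16} enters: from \eqref{db17} we have $\mathcal{J}(v(s))\le C$ uniformly for $s\ge 1$, so \eqref{db24} gives the uniform tail bound $|v(s,x)| \le C\,|x|^{-(N-1)/p} e^{-|x|/p}$, which tends to zero as $|x|\to\infty$ uniformly in $s\ge 1$. Combined with the local uniform convergence, a standard $\varepsilon/3$ argument (split $\mathbb{R}^N$ into a large ball, where local uniform convergence applies, and its complement, where the tail bound makes both $v(s_k)$ and $w_*$ small) promotes the convergence to uniform convergence on $\mathbb{R}^N$. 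Moreover the same tail bound shows $w_*\in L^2(\mathbb{R}^N;e^{|x|}dx)$ via \eqref{db25} together with weak lower semicontinuity of $\mathcal{J}$, and one checks $\nabla w_*\in L^p(\mathbb{R}^N;e^{|x|}dx)$ from the bound \eqref{db17} on $\mathcal{J}(v(s))$ by passing to a weakly convergent subsequence of $\nabla v(s_k)$ in $L^p(\mathbb{R}^N;e^{|x|}dx)$ and using lower semicontinuity of the norm. Thus $w_*$ satisfies condition (a) of Definition~\ref{definOL}, so $\omega(v_0)\ne\emptyset$.

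The main obstacle is the promotion from $C_{\mathrm{loc}}$ to $C(\mathbb{R}^N)$, and more precisely ensuring that the limit inherits the quantitative integrability in condition (a) of Definition~\ref{definOL}; this is not a purely soft compactness statement and genuinely requires the energy bound \eqref{db17} via Lemma~\ref{lemA16}, rather than just the $W^{1,\infty}$ bounds of Lemma~\ref{lemA14}. A secondary technical point is the weak-convergence extraction for the gradients in the weighted $L^p$ space, which must be done carefully since the weight $e^{|x|}$ is unbounded; one works on balls $B_R$ with the (now equivalent to unweighted) $L^p(B_R)$ norm, extracts a weakly convergent subsequence there, identifies the weak limit with $\nabla w_*$ using the already-known $C_{\mathrm{loc}}$ convergence of $v(s_k)$, and then lets $R\to\infty$ with the uniform bound \eqref{db17}. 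The rest is routine.
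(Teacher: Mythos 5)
Your proposal is correct and follows essentially the same route as the paper: uniform $W^{1,\infty}$ bounds from Lemma~\ref{lemA14} plus Ascoli--Arzel\`a for local compactness, then the uniform spatial decay \eqref{db24} from Lemma~\ref{lemA16} (fed by the energy bound \eqref{db17}) to upgrade to relative compactness in $C(\mathbb{R}^N)$ and to verify the integrability conditions in Definition~\ref{definOL}. The only difference is that you spell out the weak-compactness argument for $\nabla w_*$ in the weighted $L^p$ space, which the paper leaves as a ``readily conclude'' step; your elaboration is accurate.
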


\begin{proof}
By Lemma~\ref{lemA14}, the trajectory $\{v(s)\ :\ s\ge 1\}$ is
bounded in $W^{1,\infty}(\mathbb{R}^N)$ and the Ascoli-Arzel\`a
theorem implies that it is relatively compact in $C(\bar{B}(0,R))$
for any $R>0$. Moreover,
\begin{equation}
|v(s,x)| \le C_5 C_4^{1/p} |x|^{-(N-1)/p} e^{-|x|/p}\ , \qquad s\ge
1\ , \;\; x\ne 0\ , \label{db26}
\end{equation}
by Lemma~\ref{lemA16}, from which the relative compactness in $C(\real^N)$ follows.
Therefore there are a sequence $(s_k)_{k\ge 1}$ and $w_*\in
C(\mathbb{R}^N)$ such that \eqref{db23} holds true. Combining the
properties of $(v(s_k))_{k\ge 1}$ with Lemma~\ref{lemA14}, we
readily conclude that $w_*$ is a non-negative radially symmetric
function in $W^{1,\infty}(\mathbb{R}^N)\cap
L^2(\mathbb{R}^N;e^{|x|}\ dx)$ with a non-increasing profile and
$\nabla w_*\in L^p(\mathbb{R}^N;e^{|x|}\ dx)$, that is, $w_*\in
\omega(v_0)$.
\end{proof}

We now exploit Lemma~\ref{lemA15} to identify the elements in
$\omega(v_0)$.

\begin{proposition}\label{prA18}
If $w_*\in \omega(v_0)$ then it is a weak solution to
\begin{equation}
- \Delta_p w_* + |\nabla w_*|^{p-1} - w_* = 0 \;\;\text{ in }\;\;
\mathbb{R}^N\ . \label{db27}
\end{equation}
\end{proposition}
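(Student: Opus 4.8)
The plan is to pass to the limit along the sequence $(s_k)_{k\ge 1}$ provided by Definition~\ref{definOL}, using the energy dissipation estimate \eqref{db20} to kill the time-derivative term. Concretely, I would first observe that \eqref{db20} forces $\int_{s_k}^{s_k+1}\int_{\mathbb{R}^N} e^{|x|}|\partial_s v|^2\,dx\,ds\to 0$ as $k\to\infty$. Hence, after shifting time and setting $v_k(\sigma,x):=v(s_k+\sigma,x)$ for $\sigma\in[0,1]$, the sequence $(v_k)$ has the property that $\partial_s v_k\to 0$ in $L^2((0,1)\times\mathbb{R}^N;e^{|x|}dx)$. I would then need compactness of $(v_k)$ itself: from Lemma~\ref{lemA14} the $v_k$ are bounded in $L^\infty((0,1);W^{1,\infty}(\mathbb{R}^N))$ uniformly (for $s_k\ge 1$, say), the tail bound \eqref{db26} gives uniform spatial decay, and the equation \eqref{rdhjca} together with these bounds controls $\partial_s v_k$ in a suitable negative-order space, so that an Aubin–Lions type argument (or, more elementarily here, parabolic regularity for the uniformly parabolic equation away from the free boundary combined with Arzel\`a–Ascoli) yields a subsequence converging locally uniformly on $[0,1]\times\mathbb{R}^N$ to some limit $\bar w$. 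Since $\|v(s_k)-w_*\|_\infty\to 0$, the limit at $\sigma=0$ is $w_*$, and because $\partial_s v_k\to 0$ the limit $\bar w$ is independent of $\sigma$, so $\bar w\equiv w_*$.

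Next I would pass to the limit in the weak formulation of \eqref{rdhjca}. Multiplying \eqref{rdhjca} by $e^{|x|}\varphi$ with $\varphi\in C_c^\infty(\mathbb{R}^N)$ and integrating over $(0,1)\times\mathbb{R}^N$, the term $\int_0^1\int e^{|x|}\partial_s v_k\,\varphi\,dx\,d\sigma\to 0$ by the dissipation bound, the absorption term $|\nabla v_k|^{p-1}$ passes to the limit by the local uniform convergence of gradients (which one gets from local parabolic estimates; note $p-1\in(0,1)$ so no issue with the nonlinearity), and the zeroth-order term $v_k\to w_*$ uniformly on the support of $\varphi$. The only delicate term is the $p$-Laplacian: I would use the standard monotonicity trick for $-\Delta_p$, namely that $\int_0^1\int e^{|x|}|\nabla v_k|^{p-2}\nabla v_k\cdot\nabla\varphi$ converges to $\int_0^1\int e^{|x|}|\nabla w_*|^{p-2}\nabla w_*\cdot\nabla\varphi$ once one knows $\nabla v_k\to\nabla w_*$ in $L^p_{loc}$, which again follows from interior parabolic regularity for the $p$-Laplacian (Di~Benedetto-type estimates) combined with the vanishing of $\partial_s v_k$. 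This produces exactly the weak form of \eqref{db27}, and since $w_*$ is time-independent the factor $\int_0^1 d\sigma$ drops out.

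The main obstacle I expect is the rigorous justification of the gradient convergence $\nabla v_k\to\nabla w_*$ strongly in $L^p_{loc}$ (or at least the passage to the limit in the nonlinear diffusion term), because the equation \eqref{rdhjca} degenerates where $v=0$ and the weight $e^{|x|}$ is unbounded. To handle the degeneracy one restricts to compact sets where, by the positivity statement \eqref{db1} transported to $v$ via \eqref{db16}, one has $v_k$ bounded below by a positive constant on a space-time neighbourhood, so the equation is genuinely (uniformly) parabolic there and classical interior estimates apply; the unboundedness of the weight is irrelevant since $\varphi$ has compact support and one only ever integrates against $e^{|x|}\varphi$, which is a bounded smooth weight. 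An alternative, cleaner route avoiding pointwise gradient convergence is to use the variational/monotonicity structure directly: test the equation for $v_k$ with $v_k-w_*$, use $\partial_s v_k\to 0$ and $\mathcal{J}(v_k)$ bounded to show $\limsup_k \int_0^1\int e^{|x|}|\nabla v_k|^{p-2}\nabla v_k\cdot\nabla(v_k-w_*)\le 0$, and then invoke the monotonicity of $z\mapsto|z|^{p-2}z$ (Minty's trick) to identify the weak limit of $|\nabla v_k|^{p-2}\nabla v_k$ as $|\nabla w_*|^{p-2}\nabla w_*$. I would present whichever of these is shorter given the tools already set up; in either case the conclusion is that every $w_*\in\omega(v_0)$ solves \eqref{db27} weakly.
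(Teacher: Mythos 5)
Your proposal follows essentially the same route as the paper: shift time along $(s_k)$, use the dissipation bound \eqref{db20} to show $\partial_s v_k\to 0$ and hence that the locally uniform limit is time-independent and equal to $w_*$, obtain compactness from the $W^{1,\infty}$ bounds of Lemma~\ref{lemA14} together with the tail estimate \eqref{db24} (the paper uses Simon's compactness theorem), and pass to the limit in the diffusion term via interior DiBenedetto--Friedman $C^{1,\alpha}$ estimates giving local uniform convergence of $\nabla v_k$, which is exactly the first of your two options. One small caution: your justification of non-degeneracy via \eqref{db1} does not quite work, since the positivity of each $v_k$ on compact sets is not uniform in $k$; this is harmless, however, because the DiBenedetto--Friedman gradient H\"older estimates apply to bounded weak solutions of the singular $p$-Laplacian evolution without any positivity assumption, which is how the paper proceeds.
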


\begin{proof}
Consider $w_*\in \omega(v_0)$. According to the definition of
$\omega(v_0)$ there is a sequence $(s_k)_{k\ge 1}$ of positive real
numbers such that \eqref{db23} holds true. Assuming without loss of
generality that $s_k> 2$ for all $k\ge 1$, we set
$$
v_k(s,x) := v(s_k+s,x)\ , \qquad (s,x)\in (-1,1)\times\mathbb{R}^N\
, \quad k\ge 1\ .
$$
We infer from Lemma~\ref{lemA14} that $(v_k)_{k\ge 1}$ is bounded in
$L^\infty(-1,1;W^{1,\infty}(\mathbb{R}^N))$, while
Lemma~\ref{lemA15} ensures that
\begin{equation}
\int_{-1}^1 \int_{\mathbb{R}^N} e^{|x|} |\partial_s v_k(s,x)|^2\
dxds \le \int_{-1+s_k}^{1+s_k} \int_{\mathbb{R}^N} e^{|x|}
|\partial_s v(s,x)|^2\ dxds \mathop{\longrightarrow}_{k\to\infty} 0\
. \label{db28}
\end{equation}
In particular, $(\partial_s v_k)_{k\ge 1}$ is bounded in
$L^2((-1,1)\times\mathbb{R}^N)$ and we infer from
\cite[Corollary~4]{Si87} and the compactness of the embedding of
$W^{1,\infty}(B(0,n))$ in $C(\bar{B}(0,n))$ for all $n\ge 1$ that
there exist a subsequence of $(s_k)_{k\ge 1}$ (not relabeled) and
$w\in C([-1,1]\times \mathbb{R}^N)$ such that
\begin{equation*}
\lim_{k\to\infty} \sup_{(s,x)\in [-1,1]\times \bar{B}(0,n)} |
v_k(s,x) - w(s,x)| = 0\ , \qquad n\ge 1\ .
\end{equation*}
Together with \eqref{db26}, this convergence leads us to:
\begin{equation}
\lim_{k\to\infty} \sup_{s\in [-1,1]} \| v_k(s) - w(s)\|_\infty = 0\
. \label{db29}
\end{equation}
Furthermore, $w\in L^\infty(-1,1;W^{1,\infty}(\mathbb{R}^N))$ by
\eqref{db18}.

We next deduce from \eqref{db28} and H\"older's inequality that, for
$s\in (-1,1)$,
$$
\|v_k(s)-v_k(0)\|_2 \le \sqrt{|s|}\ \left( \int_{-1}^1
\int_{\mathbb{R}^N} |\partial_s v_k(\sigma,x)|^2\ dxd\sigma
\right)^{1/2} \mathop{\longrightarrow}_{k\to\infty} 0\ .
$$
Combining this property with \eqref{db23}, \eqref{db26}, and
\eqref{db29} gives
\begin{equation}
w(s) = w_*\ , \qquad s\in [-1,1]\ . \label{db30}
\end{equation}
Finally, observing that $v_k$ is a solution in $(-1,1)\times
\mathbb{R}^N$ to \eqref{rdhjca} with initial condition $v(-1+s_k)$,
we infer from \eqref{db18}, \cite{DBFr85b}, and \cite[Theorem~1.1]{DBFr85a} that there is $\alpha\in (0,1)$ depending on $N$,
$p$, and $C_4$ such that $(\nabla v_k)_{k\ge 1}$ is bounded in
$C^{\alpha/2,\alpha}([-1/2,1/2]\times B(0,n))$ for all $n\ge 1$.
Owing to the Ascoli-Arzel\`a theorem and \eqref{db29} we may thus
assume further that $(\nabla v_k)_{k\ge 1}$ converges towards
$\nabla w_*$ in $C([-1/2,1/2]\times \bar{B}(0,n))$ for all $n\ge 1$.
We are then in a position to pass to the limit in the equation
solved by $v_k$ (which is nothing but \eqref{rdhjca}) and conclude
that $w_*$ is a weak solution to \eqref{db27}.
\end{proof}

\medskip

\begin{proof}[Proof of Theorem~\ref{th.conv}]
Consider $w_*\in\omega(v_0)$. Since $w_*$ is radially symmetric by
Definition~\ref{definOL}, we obtain from Proposition~\ref{prA18}
that, given any $\sigma\in \mathbb{S}^{N-1}$, the function
$W_{*,\sigma}: r\mapsto w_*(r\sigma)$ is a solution to the ordinary
differential equation \eqref{SSODE} which is non-negative in
$(0,\infty)$. Moreover,
$$
\int_0^\infty e^r\ W_{*,\sigma}(r)^2\ r^{N-1}\ dr \le C \int_{\mathbb{R}^N} e^{|x|} w_*(x)^2\ dx < \infty\ ,
$$
and we infer from Theorem~\ref{th.uniq} that either $W_{*,\sigma} = f(\cdot;a_*)$ or $W_{*,\sigma}=0$. However, the lower bound \eqref{db175} excludes the latter as it guarantees that $\|W_{*,\sigma}\|_\infty = \|w_*\|_\infty \ge 1/C_4>0$. Consequently, $W_{*,\sigma} = f(\cdot;a_*)$ for any $\sigma\in\mathbb{S}^{N-1}$ so that $w_*(x) = f(|x|;a_*)$ for all $x\in\real^N$. This completes the proof.
\end{proof}

\subsection{Proof of Lemma~\ref{lemA120}}\label{s3.4}

\begin{proof}[Proof of Lemma~\ref{lemA120}]
For $t\in [0,T_e)$, the positivity of $\mathcal{I}(u(t))$ and $\mathcal{J}(u(t))$ is a straightforward consequence of the definition of the extinction time $T_e$. Also, since $2/(p-1)>1$, it readily follows from Lemma~\ref{lemA11} that
$\mathcal{I}(u(t))<\infty$. We next infer from Lemma~\ref{lemA11} and \eqref{db2} that
\begin{equation}
|\nabla u(t,x)| \le C_0 \left( 1 + \|u_0\|_\infty^{(2-p)/p} t^{-1/p}
\right)\ u(t,x) \le C \left( 1 + t^{-1/p} \right) e^{-|x|/(p-1)}
\label{db7}
\end{equation}
for $x\in \mathbb{R}^N$, from which we deduce that
$\mathcal{J}(u(t))<\infty$ after noticing that $p/(p-1)>1$. It also follows from \cite{DBFr85a, DBFr85b} that both $u$ and $\nabla u$ are locally H\"older continuous in $(0,\infty)\times \mathbb{R}^N$. Together with the continuity of $u$ in $[0,\infty)\times\mathbb{R}^N$, Lemma~\ref{lemA11}, and
\eqref{db7}, this implies that
\begin{equation}
\mathcal{I}(u)\in C([0,T_e]) \;\;\mbox{ and }\;\; \mathcal{J}(u)\in
C((0,T_e])\ . \label{db7b}
\end{equation}

We next turn to the derivation of \eqref{z1}-\eqref{z2} and mention
that the computations given below are partly formal as they use more
regularity on $u$ than that available, in particular the local
square integrability of $\partial_t u$ in
$(0,T_e)\times\mathbb{R}^N$. A fully rigorous proof requires to
replace $u$ by classical solutions $(u_\varepsilon)_\varepsilon$ to
regularized non-degenerate parabolic equations which converge to $u$
as $\varepsilon\to 0$. The computations performed below are then
done with $u_\varepsilon$ and one first lets $\varepsilon\to 0$
before passing to the limit with the other parameters such as $R$,
see below. For simplicity, we omit this step here and refer to
\cite{IaLa12} for a complete description of the approximation
procedure.

Consider a radially symmetric cut-off function $\vartheta\in C_0^\infty(\mathbb{R}^N)$ satisfying $\vartheta(x)=1$ for $x\in
B(0,1)$, $\vartheta(x)=0$ for $x\not\in B(0,2)$, $x\cdot \nabla \vartheta(x)\le 0$ for $x\in\mathbb{R}^N$, and $\nabla\vartheta/\sqrt{\vartheta}\in L^\infty(\mathbb{R}^N)$. For
$R>1$ and $x\in\mathbb{R}^N$, we set $\vartheta_R(x) :=
\vartheta(x/R)$ and infer from \eqref{eq1} and \eqref{db3} that
\begin{align*}
& \frac{1}{2} \frac{d}{dt} \int_{\mathbb{R}^N} e^{|x|} \vartheta_R(x) |u(t,x)|^2\ dx \\
& \qquad = - \int_{\mathbb{R}^N} \nabla\left( e^{|x|} \vartheta_R(x) u(t,x) \right) \cdot |\nabla u(t,x)|^{p-2} \nabla u(t,x)\ dx \\
& \qquad\qquad - \int_{\mathbb{R}^N} e^{|x|} \vartheta_R(x) |\nabla u(t,x)|^{p-1} u(t,x)\ dx \\
& \qquad = - \int_{\mathbb{R}^N} e^{|x|} u(t,x) |\nabla u(t,x)|^{p-2} \nabla u(t,x)\cdot \nabla\vartheta_R(x)\ dx \\
& \qquad\qquad - \int_{\mathbb{R}^N} e^{|x|} \vartheta_R(x) \left[ |\nabla u(t,x)|^p + u(t,x) |\nabla u(t,x)|^{p-2} \nabla u(t,x)\cdot \frac{x}{|x|} \right]\ dx \\
& \qquad\qquad + \int_{\mathbb{R}^N} e^{|x|} \vartheta_R(x) u(t,x) |\nabla u(t,x)|^{p-2} \nabla u(t,x)\cdot \frac{x}{|x|}\ dx\\
& \qquad = - \int_{\mathbb{R}^N} e^{|x|} u(t,x) |\nabla u(t,x)|^{p-2} \nabla u(t,x)\cdot \nabla\vartheta_R(x)\ dx \\
& \qquad\qquad - \int_{\mathbb{R}^N} e^{|x|} \vartheta_R(x) |\nabla
u(t,x)|^p\ dx\ .
\end{align*}
Since
\begin{align*}
& \left| \int_{\mathbb{R}^N} e^{|x|} u(t,x) |\nabla u(t,x)|^{p-2} \nabla u(t,x)\cdot \nabla\vartheta_R(x)\ dx \right| \\
& \qquad \le \frac{C^{p-1} \kappa_0 \|\nabla\vartheta\|_\infty}{R}
\left( 1 + t^{-1/p} \right)^{p-1} \int_{\mathbb{R}^N}
e^{-|x|/(p-1)}\ dx
\end{align*}
by Lemma~\ref{lemA11} and \eqref{db7}, the first term of the
right-hand side of the previous inequality vanishes as $R\to\infty$
and we conclude that
\begin{equation}
\frac{d}{dt} \mathcal{I}(u(t)) = - p \mathcal{J}(u(t))\ , \qquad
t\in (0,T_e)\ . \label{db8}
\end{equation}
In particular, $\mathcal{I}(u)\in C^1((0,T_e])$ which completes the proof of \eqref{z0}.

It next follows from \eqref{eq1} and \eqref{db3} that
\begin{align*}
& \frac{1}{p} \frac{d}{dt} \int_{\mathbb{R}^N} e^{|x|} \vartheta_R(x) |\nabla u(t,x)|^p\ dx \\
& \qquad = - \int_{\mathbb{R}^N} \mathrm{div}\left( e^{|x|} \vartheta_R(x) |\nabla u(t,x)|^{p-2} \nabla u(t,x) \right) \partial_t u(t,x)\ dx \\
& \qquad = - \int_{\mathbb{R}^N} e^{|x|} |\nabla u(t,x)|^{p-2} \nabla u(t,x) \cdot \nabla\vartheta_R(x) \partial_t u(t,x)\ dx \\
& \qquad\qquad - \int_{\mathbb{R}^N} e^{|x|} \vartheta_R(x) \left[
|\nabla u(t,x)|^{p-2} \nabla u(t,x) \cdot \frac{x}{|x|} + \Delta_p
u(t,x) \right] \partial_t u(t,x)\ dx \ .
\end{align*}
Using again \eqref{eq1} and \eqref{db3} we end up with
\begin{align}
& \frac{1}{p} \frac{d}{dt} \int_{\mathbb{R}^N} e^{|x|} \vartheta_R(x) |\nabla u(t,x)|^p\ dx \nonumber\\
& \qquad = - \int_{\mathbb{R}^N} e^{|x|} \vartheta_R(x) |\partial_t u(t,x)|^2\ dx \nonumber\\
& \qquad\qquad - \int_{\mathbb{R}^N} e^{|x|} |\nabla u(t,x)|^{p-2} \nabla
u(t,x) \cdot \nabla\vartheta_R(x) \partial_t u(t,x)\ dx \ .
\label{db9}
\end{align}
For $\delta\in (0,1)$ we infer from the Cauchy-Schwarz inequality, the
boundedness of $\nabla\vartheta/\sqrt{\vartheta}$, and \eqref{db7}
that
\begin{align*}
& \left| \int_{\mathbb{R}^N} e^{|x|} |\nabla u(t,x)|^{p-2} \nabla u(t,x) \cdot \nabla\vartheta_R(x) \partial_t u(t,x)\ dx \right| \\
& \qquad \le \delta \int_{\mathbb{R}^N} e^{|x|} \vartheta_R(x) |\partial_t u(t,x)|^2\ dx + \frac{1}{4\delta} \int_{\mathbb{R}^N} e^{|x|} \left| \frac{\nabla\vartheta_R}{\sqrt{\vartheta_R}} \right|^2  |\nabla u(t,x)|^{2(p-1)}\ dx \\
& \qquad \le \delta \int_{\mathbb{R}^N} e^{|x|} \vartheta_R(x) |\partial_t u(t,x)|^2\ dx + \frac{C}{\delta R} \left( 1+ t^{-1/p} \right)^{2(p-1)} \int_{\mathbb{R}^N} e^{-|x|} \ dx \\
& \qquad \le \delta \int_{\mathbb{R}^N} e^{|x|} \vartheta_R(x)
|\partial_t u(t,x)|^2\ dx + \frac{C}{\delta R} \left( 1+
t^{-2(p-1)/p} \right)\ .
\end{align*}
Combining the above inequality with \eqref{db9} gives, after
integration over $(t_1,t_2)\subset (0,T_e)$,
\begin{align}
& \frac{1}{p} \int_{\mathbb{R}^N} e^{|x|} \vartheta_R(x) |\nabla u(t_2,x)|^p\ dx + (1-\delta) \int_{t_1}^{t_2} \int_{\mathbb{R}^N} e^{|x|} \vartheta_R(x) |\partial_t u(t,x)|^2\ dxdt \nonumber\\
& \qquad \le \mathcal{J}(u(t_1)) + \frac{C}{\delta R}
\int_{t_1}^{t_2} \left( 1+ t^{-2(p-1)/p} \right)\ dt\ . \label{db10}
\end{align}
As $p\in (1,2)$, one has $2(p-1)/p<1$ and we infer from \eqref{db10}
that
$$
(1-\delta) \int_0^{T_e} \int_{\mathbb{R}^N} e^{|x|} \vartheta_R(x)
|\partial_t u(t,x)|^2\ dxdt \le \mathcal{J}(u_0) + \frac{C}{R}
\left( T_e + T_e^{(2-p)/p} \right)\ .
$$
The Fatou lemma then entails that $\partial_t u$ belongs to
$L^2((0,T_e)\times\mathbb{R}^N; e^{|x|}\ dxdt)$ and we may let $R\to
\infty$ in \eqref{db10} to deduce that
$$
\mathcal{J}(u(t_2)) + (1-\delta) \int_{t_1}^{t_2}
\int_{\mathbb{R}^N} e^{|x|}  |\partial_t u(t,x)|^2\ dxdt \le
\mathcal{J}(u(t_1))\ .
$$
Letting $\delta\to 0$ finally gives \eqref{z2}.
\end{proof}

\section{Classification of self-similar profiles}\label{sec.s4}

This section is devoted to the proof of Theorem~\ref{th.uniq},
which, besides the uniqueness of the stabilization profile which is
at the heart of the proof of Theorem~\ref{th.conv}, also provides a
complete classification of the possible behaviors of solutions to
\eqref{SSODE}. A by-product of this section is the description of
all non-negative radially symmetric solutions to \eqref{eq1} of the
form \eqref{sepvar}.

Recall that the purpose of this section is to study the behavior of solutions to the initial boundary value problem
\begin{eqnarray}
&&\left(|f'|^{p-2}f'\right)'(r)+\frac{N-1}{r}\left(|f'|^{p-2}f'\right)(r)+f(r)-|f'(r)|^{p-1}=0, \qquad r>0,\label{ODE}
\\
&&f(0)=a, \ f'(0)=0, \label{IC}
\end{eqnarray}
according to the value of $a>0$. We first observe that, introducing $g:= -|f'|^{p-2} f'$, an alternative formulation of \eqref{ODE}-\eqref{IC} reads
\begin{align*}
f'(r) = - |g(r)|^{(2-p)/(p-1)} g(r)\ , & \qquad g'(r) + \frac{N-1}{r} g(r) + f(r) - |g(r)| = 0\ , \qquad r>0\ , \\
f(0) = a\ , & \qquad g(0)=0\ .
\end{align*}
Since $1/(p-1)>1$, standard results ensure that this initial value problem has a unique solution $(f,g)(\cdot;a)\in C^1([0,\mathcal{R}(a)))$ defined in a maximal interval of existence $[0,\mathcal{R}(a))$ for some $\mathcal{R}(a)\in(0,\infty]$. In addition, either $\mathcal{R}(a) = \infty$ or
\begin{equation}
\mathcal{R}(a)<\infty \;\;\text{ and }\;\; \limsup_{r\to\mathcal{R}(a)}\ (|f(r;a)|+|g(r;a)|) = \infty\ . \label{altBU}
\end{equation}
Clearly $f(\cdot;a)$ solves \eqref{ODE}-\eqref{IC} and, since $f(0;a)=a>0$, we may define the first zero of $f(\cdot;a)$ by
\begin{equation}\label{support}
R(a):=\inf\{r \in [0,\mathcal{R}(a))\ :\ f(r;a)=0\}\in (0,\mathcal{R}(a)]\ ,
\end{equation}
whenever it exists. When no confusion may arise we shall omit the dependence of $f(\cdot;a)$ on $a$ and simply use the notation $f$ instead without further notice.

Let us recall that we assume throughout all this section that \emph{the space dimension satisfies $N\geq 2$}. As already mentioned in the introduction, the proof of Theorem~\ref{th.uniq} for $N=1$ requires a different technique and is done in \cite{IL16}. The study of \eqref{ODE}-\eqref{IC} is divided into a number of lemmas that we
give in the sequel.

\subsection{Basic properties}\label{s4.1}

In the next lemma we gather some elementary properties of solutions $f(\cdot;a)$ to \eqref{ODE}-\eqref{IC} for $a>0$.

\begin{lemma}\label{lem41}
Let $a>0$.
\begin{itemize}
\item [(a)] For $r\in(0,R(a))$ the following bounds hold true:
\begin{equation}\label{f1}
0<f(r;a)<a, \qquad -\left(\frac{a}{N}r\right)^{1/(p-1)}<f'(r;a)<0.
\end{equation}
\item [(b)] In the interval $(0,R(a))$, \eqref{ODE} can be written in the following
alternative form:
\begin{equation}\label{f2}
\frac{d}{dr}\left[\varrho(r)|f'(r;a)|^{p-2}f'(r;a)\right]=-\varrho(r)f(r;a),
\qquad r\in(0,R(a))\ ,
\end{equation}
where
\begin{equation}
\varrho(r)=r^{N-1}e^r\ , \qquad r\ge 0\ . \label{fctrho}
\end{equation}
\item [(c)] The maximal existence time is infinite, that is, $\mathcal{R}(a)=\infty$.
\item [(d)] If $R(a)=\infty$, then
$$
\lim\limits_{r\to\infty}f(r;a)=\lim\limits_{r\to\infty}f'(r;a)=0.
$$
\end{itemize}
\end{lemma}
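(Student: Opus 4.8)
The plan is to establish parts (a)--(d) of Lemma~\ref{lem41} in order, each relying on the previous ones, using only ODE comparison arguments and the variational/divergence structure already visible in \eqref{ODE}.

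\textbf{Part (a).} First I would observe that multiplying \eqref{ODE} by $\varrho(r)=r^{N-1}e^{r}$ turns the first two terms into an exact derivative, so that on any interval where $f>0$ one has $\frac{d}{dr}\bigl[\varrho(r)|f'|^{p-2}f'\bigr]=\varrho(r)\bigl(|f'|^{p-1}-f\bigr)$. I would run a short continuity/bootstrap argument near $r=0$: from the initial data and the equation, $(|f'|^{p-2}f')'(0^+)$ has the sign of $-f(0)=-a<0$, so $f'<0$ and $f$ is decreasing immediately to the right of $0$; hence $0<f(r;a)<a$ as long as $f$ stays positive, i.e.\ on $(0,R(a))$. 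For the lower bound on $f'$, integrate \eqref{f2} from $0$ to $r$: since $f(s;a)<a$ and $\varrho$ is increasing, $-\varrho(r)|f'(r)|^{p-2}f'(r)=\int_0^r\varrho(s)f(s)\,ds < a\int_0^r \varrho(s)\,ds$. One then needs the elementary estimate $\int_0^r \varrho(s)\,ds \le r\varrho(r)/N$ — equivalently $\int_0^r s^{N-1}e^{s}\,ds\le (r^N/N)e^{r}$, which holds because $s\mapsto e^s$ is increasing — to get $|f'(r)|^{p-1}<ar/N$, i.e.\ the stated bound $-(ar/N)^{1/(p-1)}<f'(r;a)<0$.

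\textbf{Parts (b) and (c).} Part (b) is just the rewriting used above, so it is recorded as a consequence of the computation in (a); no separate work is needed. For part (c), the goal is to rule out the blow-up alternative \eqref{altBU}. Here I would split according to whether $R(a)<\infty$ or $R(a)=\infty$. On $[0,R(a))$ the bounds from (a) give $|f|\le a$ and $|f'|\le (aR(a)/N)^{1/(p-1)}$ (or $|f'|$ bounded on compacts if $R(a)=\infty$), hence $|g|=|f'|^{p-1}$ is bounded on every compact subinterval, so $\mathcal R(a)>R(a)$ and, if $R(a)=\infty$, already $\mathcal R(a)=\infty$. If $R(a)<\infty$, I continue past the first zero: near $r=R(a)$ the equation is a regular ODE (the coefficient $(N-1)/r$ is smooth there), and one shows $f'(R(a);a)\le 0$; if $f'(R(a);a)<0$ the solution is locally $C^1$ and extends, while the degenerate case $f'(R(a))=0=f(R(a))$ would force, via \eqref{ODE}, a trivial germ and can be excluded or handled by the uniqueness already granted for the $(f,g)$ system. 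A Gronwall estimate on $(|f|+|g|)$ then prevents finite-time blow-up, giving $\mathcal R(a)=\infty$.

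\textbf{Part (d).} Assume $R(a)=\infty$, so $f>0$ and $f'<0$ on all of $(0,\infty)$; thus $f$ decreases to a limit $\ell\ge 0$, and it remains to show $\ell=0$ and $f'\to0$. I would integrate \eqref{f2} over $(0,\infty)$: since $\varrho(r)|f'|^{p-2}f'(r)=-\int_0^r\varrho(s)f(s)\,ds$ is negative and decreasing, and $\varrho(r)f(r)\ge \varrho(r)\ell$ which is \emph{not} integrable unless $\ell=0$, the left side would tend to $-\infty$ faster than $\varrho(r)$ times a bounded quantity allows — more precisely, $\varrho(r)|f'(r)|^{p-1}\ge \ell\int_0^r\varrho(s)\,ds\sim \ell\varrho(r)$ forces $|f'(r)|^{p-1}\ge \ell/2$ eventually, contradicting $f'\to 0$ which itself follows because $f$ is monotone and bounded (so $f'\not\to 0$ would make $f$ unbounded below). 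This contradiction yields $\ell=0$; and then from $|f'(r)|^{p-1}=\varrho(r)^{-1}\int_0^r\varrho(s)f(s)\,ds$ together with $f(s)\to0$ and a Cesàro/L'Hôpital-type argument ($\int_0^r\varrho f / \int_0^r \varrho \to 0$, while $\int_0^r\varrho/\varrho(r)\to1$), one concludes $f'(r)\to0$ as well.

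The step I expect to be the main obstacle is \textbf{part (c)}, specifically the continuation past the first zero $R(a)$ when $R(a)<\infty$: one must check that the solution does not cease to exist there despite the degeneracy of $|f'|^{p-2}f'$ at points where $f'=0$, and the cleanest route is to work with the equivalent first-order system in $(f,g)$ introduced just before the lemma, where the right-hand side $g\mapsto |g|^{(2-p)/(p-1)}g$ is $C^1$ away from $0$ and continuous everywhere, combined with the a priori bounds from (a) to run Gronwall and exclude \eqref{altBU}. The parts (a), (b), (d) are comparatively routine once the weighted formulation \eqref{f2} and the inequality $\int_0^r s^{N-1}e^s\,ds\le (r^N/N)e^r$ are in hand.
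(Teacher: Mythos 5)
Your parts (a) and (b) follow the paper's route (bootstrap $f'<0$ from the sign of $(|f'|^{p-2}f')'(0^+)$, then integrate the weighted identity), though the identity you state first, $\frac{d}{dr}\left[\varrho|f'|^{p-2}f'\right]=\varrho\left(|f'|^{p-1}-f\right)$, is not correct as written: with $\varrho=r^{N-1}e^r$ the right-hand side is $\varrho\left(|f'|^{p-2}f'+|f'|^{p-1}-f\right)$, which collapses to $-\varrho f$ precisely when $f'\le 0$ --- the relevant hypothesis is the sign of $f'$, not the positivity of $f$. This is repairable, and your subsequent use of \eqref{f2} is fine.

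The genuine gap is in part (c). Continuation past the first zero is not the obstacle: the $(f,g)$ system has a $C^1$ right-hand side because $1/(p-1)>1$, so local existence, uniqueness and continuation are automatic, and the only thing to exclude is the blow-up alternative \eqref{altBU}. Your proposed Gronwall estimate on $|f|+|g|$ cannot do this. Indeed $|f'|=|g|^{1/(p-1)}$ with superlinear exponent $1/(p-1)>1$, so the best differential inequalities available for $u=|f|$, $v=|g|$ on $[r_0,\infty)$ are $u'\le v^{1/(p-1)}$ and $v'\le Cv+u$, and this comparison system does blow up in finite time (it forces $v''\ge v^{1/(p-1)}$, an Emden--Fowler inequality with superlinear power). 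Once $f$ turns negative after $R(a)$ you have no a priori bound from (a), so a linear Gronwall argument cannot close. The paper's proof instead introduces the energy $E=\frac{p-1}{p}|f'|^p+\frac12 f^2$: multiplying \eqref{ODE} by $f'$ absorbs the dangerous product $ff'$ into $\frac{d}{dr}\left(f^2/2\right)$ and yields $E'\le |f'|^p\le \frac{p}{p-1}E$, a \emph{linear} inequality for $E$ that rules out blow-up; this energy (or an equivalent use of the sign structure) is the missing idea. A smaller issue: in part (d) you justify $f'\to 0$ by ``$f$ monotone and bounded'', which is false in general ($f'$ may spike); the paper derives it from the monotonicity of the same energy $E$ together with $f'\in L^p(0,\infty)$, whereas your own l'H\^opital identity $|f'(r)|^{p-1}=\varrho(r)^{-1}\int_0^r\varrho(s) f(s)\,ds\to\ell$ already gives both $\ell=0$ and $f'\to 0$ directly, so that part can be salvaged without the false intermediate claim.
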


\begin{proof}
\textbf{Properties~(a)-(b).} It readily follows from \eqref{ODE} that, for $r>0$, 
\begin{equation}\label{interm1}
\frac{d}{dr}\left[\varrho(r)|f'(r)|^{p-2}f'(r)\right]=\varrho(r)\left[|f'(r)|^{p-2}f'(r)+|f'(r)|^{p-1}-f(r)\right].
\end{equation}
Since
$$
\lim\limits_{r\to0}(|f'|^{p-2}f')'(r)=-\frac{a}{N}<0, \qquad
f'(0)=0,
$$
we deduce that $f'$ is negative in a right neighborhood of $r=0$. As
long as this is true, we infer from \eqref{interm1} that
$$
\frac{d}{dr}\left[\varrho(r)|f'(r)|^{p-2}f'(r)\right]=-\varrho(r)f(r)<0,
$$
whence
$$
\varrho(r)|f'(r)|^{p-2}f'(r)+\int_0^r\varrho(\sigma)f(\sigma)\,d\sigma=0.
$$
Thus, $f'$ cannot vanish in $(0,R(a))$. Consequently, $f'<0$ in
$(0,R(a))$ and \eqref{f2} follows from \eqref{interm1}. Furthermore,
$f$ is decreasing on $(0,R(a))$, so that $0<f(r)<a$ for any $r\in(0,R(a))$. We then deduce from \eqref{f2} that
$$
\frac{d}{dr}\left[\varrho(r)|f'(r)|^{p-2}f'(r)\right]\geq-a\varrho(r)\ , \qquad r\in(0,R(a))\ .
$$
Integrating the above inequality gives, for $r\in (0,R(a))$,
$$
\varrho(r)|f'(r)|^{p-2}f'(r)\geq-a\int_0^r\varrho(\sigma)\,d\sigma\geq-ae^r\int_0^r\sigma^{N-1}\,d\sigma,
$$
whence $|f'(r)|^{p-2}f'(r)\geq-ar/N$, leading to the second bound in \eqref{f1}.

\medskip

\noindent\textbf{Property~(c).} Define the following energy
\begin{equation}\label{energy}
E(r):=\frac{p-1}{p}|f'(r)|^{p}+\frac{1}{2}f(r)^2, \qquad
r\in(0,\mathcal{R}(a)).
\end{equation}
We readily infer from \eqref{ODE} and the definition of the energy that, for $r\in [0,\mathcal{R}(a))$,
\begin{equation*}
\begin{split}
E'(r)&=f'(r)\left[(|f'|^{p-2}f')'(r)+f(r)\right]=f'(r)|f'(r)|^{p-1}-\frac{N-1}{r}|f'(r)|^p\\
&\leq|f'(r)|^p
=\frac{p}{p-1}\left[E(r)-\frac{1}{2}f(r)^2\right]\leq\frac{p}{p-1}E(r)\ .
\end{split}
\end{equation*}
Thus $E$ does not blow up in finite time which excludes the alternative \eqref{altBU} and entails that $\mathcal{R}(a)=\infty$.

\medskip

\noindent\textbf{Property~(d).} Assume now that $R(a)=\infty$. Recalling the energy
introduced in \eqref{energy}, we infer from \eqref{f1}  that
\begin{equation}\label{interm2}
E'(r)=f'(r)\left[|f'(r)|^{p-1}-\frac{N-1}{r}(|f'|^{p-2}f')(r)\right]=-\left(1+\frac{N-1}{r}\right)|f'(r)|^p\leq 0
\end{equation}
for $r\in (0,\infty)$. Since $E$ is non-negative in $(0,\infty)$, it follows that there exist $l_f\ge 0$ and $l_E\geq 0$ such that
$$
\lim\limits_{r\to\infty}f(r)=l_f, \quad
\lim\limits_{r\to\infty}E(r)=l_E.
$$
From the definition of $E$ we deduce that $-f'(r)=|f'(r)|$ also has a limit as $r\to\infty$, while \eqref{interm2} and the existence of
$l_E\geq 0$ entail that $f'\in L^p(0,\infty)$. Therefore,
\begin{equation}
\label{interm3}\lim\limits_{r\to\infty}f'(r)=0.
\end{equation}
Assume now for contradiction that $l_f>0$. We infer from \eqref{ODE}
and \eqref{interm3} that
$$
\lim\limits_{r\to\infty}(|f'|^{p-2}f')'(r)=-l_f,
$$
so that there exists $R>0$ such that
$$
(|f'|^{p-2}f')'(r)\leq-\frac{l_f}{2} \qquad {\rm for \ any} \ r\geq
R.
$$
After integration, we get
$$
(|f'|^{p-2}f')(r)\leq(|f'|^{p-2}f')(R)-\frac{l_f}{2}(r-R), \qquad
r\geq R,
$$
which implies that $|f'(r)|^{p-1}=-(|f'|^{p-2}f')(r)$ diverges to
$\infty$ as $r\to\infty$ and contradicts
\eqref{interm3}. Therefore $l_f=0$, ending the proof.
\end{proof}

The following lemma shows that $R(a)$ is indeed finite for some values of $a$.

\begin{lemma}\label{lem42}
There exists $a_{\infty}>0$ such that $R(a)\in(0,\infty)$ for any
$a>a_{\infty}$.
\end{lemma}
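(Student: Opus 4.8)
The plan is to show that, after an appropriate rescaling, problem \eqref{ODE}-\eqref{IC} with $a$ large becomes a small perturbation of the radial $p$-Laplacian equation $\Delta_p\Phi+\Phi=0$, whose solution emanating from a positive height necessarily changes sign; continuous dependence on the perturbation parameter then forces $f(\cdot;a)$ itself to change sign, hence $R(a)<\infty$, once $a$ is large enough.

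Concretely, for $a>0$ set $\mu=\mu(a):=a^{-(2-p)/p}$ and $\Phi_a(s):=a^{-1}f(\mu s;a)$. A direct computation shows that $\Phi_a(0)=1$, $\Phi_a'(0)=0$, and
$$\left(|\Phi_a'|^{p-2}\Phi_a'\right)'(s)+\frac{N-1}{s}\left(|\Phi_a'|^{p-2}\Phi_a'\right)(s)+\Phi_a(s)-\mu(a)\,|\Phi_a'(s)|^{p-1}=0\ ,\qquad s>0\ ,$$
while, by Lemma~\ref{lem41}(a), $0<\Phi_a(s)<1$ and $|\Phi_a'(s)|<(s/N)^{1/(p-1)}$ as long as $\Phi_a>0$. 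Since $\mu(a)\to 0$ as $a\to\infty$, I would recast this equation as a first order system for $(\Phi_a,-|\Phi_a'|^{p-2}\Phi_a')$ — exactly as at the beginning of Section~\ref{sec.s4} — and, using the above a priori bounds together with continuous dependence of solutions on the parameter $\mu$, deduce that $\Phi_a$ converges in $C^1_{\mathrm{loc}}([0,\infty))$, as $a\to\infty$, to the solution $\Phi$ of the limiting problem
$$\left(|\Phi'|^{p-2}\Phi'\right)'(s)+\frac{N-1}{s}\left(|\Phi'|^{p-2}\Phi'\right)(s)+\Phi(s)=0\ ,\qquad \Phi(0)=1\ ,\quad \Phi'(0)=0\ .$$
Thus, if I can show that $\Phi$ vanishes at some finite $S_0>0$ with $\Phi'(S_0)<0$, then $\Phi<0$ somewhere on $(S_0,S_0+1)$, whence $\Phi_a<0$ there for $a$ large, i.e. $f(\cdot;a)$ takes negative values and $R(a)<\infty$; this provides the required $a_\infty$ (that $R(a)>0$ is clear since $f(0;a)=a>0$).

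It remains to prove that $\Phi$ changes sign, which is the heart of the matter and the step I expect to be the main obstacle. Arguing as in Lemma~\ref{lem41}, $\Phi$ is positive and decreasing on $(0,R_\Phi)$, with $R_\Phi$ its first zero, and if $R_\Phi=\infty$ then $\Phi(s)\to 0$ and $\Phi'(s)\to 0$ as $s\to\infty$. Assuming this, from $s^{N-1}|\Phi'(s)|^{p-1}=\int_0^s\sigma^{N-1}\Phi(\sigma)\,d\sigma$ and the monotonicity of $\Phi$ one first obtains the decay estimate $\Phi(s)\le C\,s^{-p/(2-p)}$ for large $s$ — and here the assumption $p>p_c=2N/(N+1)$ is precisely what makes $\sigma\mapsto\sigma^{N-1}\Phi(\sigma)$ integrable at infinity, while $p<2\le N$ controls the decay rate of $|\Phi'|$. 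One then checks, using the equation, that the Pohozaev-type function
$$P(s):=s^N\left[\frac{p-1}{p}|\Phi'(s)|^p+\frac12\,\Phi(s)^2\right]+\frac{N-p}{p}\,s^{N-1}\,\Phi(s)\,|\Phi'(s)|^{p-2}\Phi'(s)$$
satisfies $P'(s)=\dfrac{p(N+2)-2N}{2p}\,s^{N-1}\,\Phi(s)^2>0$ on $(0,\infty)$, the sign of the prefactor following from $p>2N/(N+2)$ (a consequence of $p>p_c$), whereas the boundary values $P(0^+)=0$ and, by the decay estimates above, $\lim_{s\to\infty}P(s)=0$ contradict $P$ being strictly increasing. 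Hence $R_\Phi<\infty$, and the integral identity forces $\Phi'(R_\Phi)<0$; one sets $S_0:=R_\Phi$. (Alternatively, one could simply invoke the known non-existence of positive entire radial solutions of $\Delta_p\Phi+\Phi=0$ when $p\ne 2N/(N+2)$.) The delicate aspect is exactly this last step: one cannot rule out "$\Phi$ decaying to zero without vanishing" by a soft monotonicity argument, since for $a<a_*$ the original profiles $f(\cdot;a)$ behave that way (Theorem~\ref{th.uniq}(a)); the largeness of $a$ matters only near the origin, where the weight $e^r$ in $\varrho$ is harmless, and the rescaling isolates precisely that regime so that the Pohozaev functional — whose monotonicity and whose vanishing boundary terms both hinge on $p_c<p<2$ — can be brought to bear.
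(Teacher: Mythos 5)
Your proposal is correct, and its scaffolding is exactly that of the paper: the rescaling $f(r;a)=a\,\Phi_a\left(a^{(2-p)/p}r\right)$, the limit problem $\Delta_p\Phi+\Phi=0$ with $\Phi(0)=1$, $\Phi'(0)=0$ obtained as $a\to\infty$ (the paper writes $\varphi$, $\psi$ for your $\Phi_a$, $\Phi$), and the continuous-dependence argument transferring a strict sign change of $\Phi$ back to $f(\cdot;a)$ for $a$ large. The genuine difference is in the step you correctly single out as the heart of the matter, namely that $\Phi$ vanishes at some finite $s_0$ with $\Phi'(s_0)<0$. The paper proves this by a purely integral argument (adapted from an idea of Shi): it fixes $\vartheta\in((N-p)/N,\,p-1)$ --- an interval which is non-empty precisely because $p>p_c$ --- bounds $\int_0^s\sigma^{N-1}\psi\,d\sigma$ from below by a multiple of $s^{N\vartheta}\psi(s)^{\vartheta}$, and integrates the resulting differential inequality to force $\psi^{(p-1-\vartheta)/(p-1)}$ to become negative. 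You instead first derive the decay $\Phi(s)\le Cs^{-p/(2-p)}$ (whence, using $p>p_c$, $\sigma^{N-1}\Phi\in L^1(0,\infty)$ and $|\Phi'(s)|\le Cs^{-(N-1)/(p-1)}$, and, using $p<2\le N$, all boundary terms of $P$ at infinity vanish) and then run the classical Pohozaev identity; I checked that $P'(s)=\frac{p(N+2)-2N}{2p}\,s^{N-1}\Phi(s)^2$ and that the prefactor is positive since $p>p_c>2N/(N+2)$, so the contradiction with $P(0^+)=\lim_{s\to\infty}P(s)=0$ is sound, as is the conclusion $\Phi'(s_0)<0$ from the integrated equation. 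Both routes use $p>p_c$ in an essential way. The paper's argument is more elementary and needs no decay rate beyond monotonicity; yours is more conceptual, amounts to the standard non-existence theory for positive radial entire solutions of $\Delta_p u+u^q=0$ with $q$ Pohozaev-subcritical (so your suggested shortcut of simply citing that theory is legitimate), and is thematically consistent with the Pohozaev functional the paper builds later, in Section~\ref{s4.3}, for a different purpose.
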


\begin{proof}
For $a>0$, introduce the following rescaling
$$
f(r;a)=a\varphi\left(a^{(2-p)/p}r;a\right), \quad
f'(r;a)=a^{2/p}\varphi'\left(a^{(2-p)/p}r;a\right), \quad
r\in[0,R(a)).
$$
Letting $s:=a^{(2-p)/p}r$ and dropping the parameter $a$ from the
notation, it follows from \eqref{ODE} that $\varphi$ solves the
following differential equation
\begin{equation}\label{interm4}
(|\varphi'|^{p-2}\varphi')'(s)+\frac{N-1}{s}(|\varphi'|^{p-2}\varphi')(s)+\varphi(s)-a^{(p-2)/p}|\varphi'(s)|^{p-1}=0,
\end{equation}
for $s\in(0,\infty)$, with initial conditions
$\varphi(0)=1$, $\varphi'(0)=0$. Let us also consider the solution
$\psi$ to the following problem
\begin{equation}\label{interm5}
\left\{\begin{array}{l}
\displaystyle{(|\psi'|^{p-2}\psi')'(s)+\frac{N-1}{s}(|\psi'|^{p-2}\psi')(s)+\psi(s)=0,
\quad s>0,} \\
\\
\psi(0)=1, \ \psi'(0)=0.
\end{array}\right.
\end{equation}
Note that, since $p<2$, continuous dependence guarantees that
$$
\lim\limits_{a\to\infty}\sup\limits_{[0,s]}|\varphi(\cdot;a)-\psi|=0 \qquad {\rm for \ all} \ s>0.
$$
We claim that there is $s_0\in(0,\infty)$ such that $\psi(s_0)=0$,
$\psi'(s_0)<0$, and $\psi'(s)<0<\psi(s)$ for $s\in(0,s_0)$. The fact
that there exists a maximal interval $(0,s_0)$ on which $\psi>0$ and
$\psi'<0$ is easily proved as in Lemma \ref{lem41}, so it remains to
check that $s_0<\infty$. Assume for contradiction that $s_0=\infty$,
so that $\psi$ is positive and decreasing in $(0,\infty)$. In the
analysis below, we improve on an idea coming from \cite{Shi04}. Fix
$\vartheta\in((N-p)/N,p-1)$, which is possible since
$$
p-1-\frac{N-p}{N}=\frac{p(N+1)-2N}{N}=\frac{N+1}{N}(p-p_c)>0,
$$
and introduce
$$
\delta:=\left[\int_0^1\sigma^{N-1}\psi(\sigma)\,d\sigma\right]^{(1-\vartheta)/(p-1)}>0.
$$
It follows from the monotonicity and positivity of $\psi$ that, for
any $s\geq1$,
\begin{equation}\label{interm6}
\begin{split}
\int_0^s\sigma^{N-1}\psi(\sigma)\,d\sigma&\geq\left[\int_0^1\sigma^{N-1}\psi(\sigma)\,d\sigma\right]^{1-\vartheta}
\left[\int_0^s\sigma^{N-1}\psi(\sigma)\,d\sigma\right]^{\vartheta}\\
&\geq\delta^{p-1}\frac{s^{N\vartheta}}{N^{\vartheta}}\psi(s)^{\vartheta}.
\end{split}
\end{equation}
We then infer from \eqref{interm5} that, for any $s\geq1$,
$$
\frac{d}{ds}\left[s^{N-1}(|\psi'|^{p-2}\psi')(s)\right]=-s^{N-1}\psi(s),
$$
hence, taking into account \eqref{interm6},
$$
(|\psi'|^{p-2}\psi')(s)=-\frac{1}{s^{N-1}}\int_0^s\sigma^{N-1}\psi(\sigma)\,d\sigma\leq-\frac{\delta^{p-1}}{N^{\vartheta}}s^{N(\vartheta-1)+1}\psi(s)^{\vartheta},
$$
or equivalently, since $\psi>0$ and $\psi'<0$ in $(0,\infty)$,
$$
-\psi(s)^{-\vartheta/(p-1)}\psi'(s)\geq\frac{\delta}{N^{\vartheta/(p-1)}}s^{(N(\vartheta-1)+1)/(p-1)}.
$$
By integration on the interval $[1,s]$ and straightforward
manipulations, we thus obtain that
$$
\psi(s)^{(p-1-\vartheta)/(p-1)}\leq\psi(1)^{(p-1-\vartheta)/(p-1)}+\frac{\delta(p-1-\vartheta)}{(p-N+N\vartheta)N^{\vartheta/(p-1)}}
\left[1-s^{(p+N(\vartheta-1))/(p-1)}\right].
$$
We deduce from the definition of $\vartheta$ that $p-1-\vartheta>0$
and $p+N(\vartheta-1)>0$, so that $\psi(s)^{(p-1-\vartheta)/(p-1)}$
becomes negative for $s$ sufficiently large, hence a contradiction.
Consequently, $s_0<\infty$ and
$$
s_0^{N-1}|\psi'(s_0)|^{p-2}\psi'(s_0)=-\int_0^{s_0}\sigma^{N-1}\psi(\sigma)\,d\sigma<0,
$$
so that $\psi'(s_0)<0$. Thus, for $\e>0$ sufficiently small, there
are $s_1<s_0<s_2$ such that $\psi(s_1)>\e>-\e>\psi(s_2)$. By
continuous dependence we realize that
$$
\varphi(s_1;a)>\frac{\e}{2}>-\frac{\e}{2}>\varphi(s_2;a),
$$
for $a$ sufficiently large. Therefore $\varphi(\cdot;a)$ vanishes in $(0,\infty)$ when $a$ is sufficiently large and so does $f(\cdot;a)$, which completes the proof.
\end{proof}

\subsection{Classification of behaviors as $r\to\infty$}\label{s4.2}

In this subsection we analyze the possible decay rates as $r\to\infty$ of solutions to \eqref{ODE}-\eqref{IC}. To this end, let $a>0$ and recall that the function $g(\cdot;a)$ is defined by
$$
g(r;a)=-|f'(r;a)|^{p-2}f'(r;a)\ , \qquad r\ge 0\ .
$$
Since $f'(\cdot;a)<0$ on $(0,R(a))$ by Lemma~\ref{lem41}~(a), it
follows that $g(\cdot;a)=|f'(\cdot;a)|^{p-1}>0$ on $(0,R(a))$ and
the first positive zero of $g(\cdot;a)$ (if any) thus satisfies
\begin{equation}\label{r1a}
R_1(a):=\sup\{R>0: g(\cdot;a)>0 \ {\rm on} \ (0,R)\} \ge R(a)\ ,
\end{equation}
where $R_1(a)=\infty$ if $g(\cdot;a)>0$ on $(0,\infty)$. From \eqref{ODE} we readily infer that
$$
-g'(r)-\left(1+\frac{N-1}{r}\right)g(r)+f(r)=0, \qquad r\in
(0,R_1(a)),
$$
so that, differentiating with respect to $r$, we get
\begin{equation}\label{ODEg}
g''(r)+\left(1+\frac{N-1}{r}\right)g'(r)+g(r)^{1/(p-1)}-\frac{N-1}{r^2}g(r)=0,
\qquad r\in(0,R_1(a)),
\end{equation}
where for simplicity we denoted $g=g(\cdot;a)$. Furthermore,
\begin{equation}\label{ICg}
g(0)=0, \quad g'(0)=\frac{a}{N}>0.
\end{equation}
We also introduce the function
\begin{equation}\label{f3}
w(r)=w(r;a):=\varrho(r)g(r;a), \qquad r\in [0,R_1(a)),
\end{equation}
and by straightforward calculations we obtain that $w$ solves
\begin{align}
& w''(r)-\left(1+\frac{N-1}{r}\right)w'(r)+\varrho(r)^{-(2-p)/(p-1)}w(r)^{1/(p-1)}=0, \quad r\in (0,R_1(a)),
\label{ODEw} \\ & w(0)=0, \quad w'(r)\sim a\varrho(r) \ {\rm as} \
r\to 0.\label{ICw}
\end{align}

\begin{lemma}\label{lem43}
The function $g(\cdot;a)$ is positive on $(0,\infty)$ (or, equivalently, $R_1(a)=\infty$) if and
only if $w'(\cdot;a)>0$ on $(0,\infty)$.
\end{lemma}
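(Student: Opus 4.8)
The plan is to exploit the ODE \eqref{ODEw} satisfied by $w = \varrho g$, which has the advantage of not containing a zero-order term in $w$ with the wrong sign, and to run a shooting/continuity argument comparing the signs of $w$ and $w'$. First, I would record from \eqref{ICg}, \eqref{ICw} that near $r=0$ one has $w(r)>0$ and $w'(r)\sim a\varrho(r)>0$, so both $w$ and $w'$ are positive on a right-neighbourhood of the origin, regardless of the value of $a$. Next, the key structural observation is that, on the interval $(0,R_1(a))$ where $g>0$ (equivalently $w>0$), the term $\varrho(r)^{-(2-p)/(p-1)}w(r)^{1/(p-1)}$ in \eqref{ODEw} is strictly positive, so \eqref{ODEw} forces
\begin{equation*}
w''(r) = \left(1+\frac{N-1}{r}\right)w'(r) - \varrho(r)^{-(2-p)/(p-1)}w(r)^{1/(p-1)} < \left(1+\frac{N-1}{r}\right)w'(r)\ .
\end{equation*}
This is a differential inequality of the form $w'' < b(r) w'$ with $b(r) = 1 + (N-1)/r > 0$, and from it I would deduce that once $w'$ reaches zero it must stay $\le 0$ afterwards: indeed, if $w'(r_0)=0$ and $w>0$ on a neighbourhood of $r_0$, then $w''(r_0)<0$, so $w'$ is strictly decreasing through $r_0$ and becomes negative; and thereafter an integrating-factor argument (multiply by $\exp(-\int b)$) shows $w'$ cannot climb back to zero as long as $w$ stays positive.

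From this one-sided control the equivalence follows in both directions. For the "if" direction, suppose $w'>0$ on all of $(0,\infty)$; then $w$ is increasing, hence $w>0$ on $(0,\infty)$, i.e. $g>0$ on $(0,\infty)$ and $R_1(a)=\infty$. For the "only if" direction, suppose $R_1(a)=\infty$, i.e. $g>0$, equivalently $w>0$, on all of $(0,\infty)$; I must show $w'>0$ throughout. Assume not: then there is a first point $r_0\in(0,\infty)$ with $w'(r_0)=0$ (using $w'>0$ near $0$). By the monotonicity argument above, $w'<0$ on $(r_0,\infty)$, so $w$ is strictly decreasing on $(r_0,\infty)$. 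The task is then to derive a contradiction with $w>0$ on $(0,\infty)$ — i.e. to show $w$ must actually hit zero. Here I would use the sharpened lower bound on the decrease: on $(r_0,\infty)$ we have $w'' < b(r)w'$ with $w'<0$, which can be integrated to get a quantitative estimate showing $w'(r) \le w'(r_1)\exp\!\big(\int_{r_1}^r b\big)$ for $r_1>r_0$ fixed — but $\exp(\int b) \sim c\, r^{N-1} e^{r}$ grows, so this only says $w'$ stays negative, not that it is bounded away from $0$. A cleaner route: translate back to $g$, where $g>0$, $g$ eventually decreasing, and $g$ satisfies \eqref{ODEg}; combine with the already-established facts from Lemma~\ref{lem41}(d) that $f,f'\to 0$, hence $g\to 0$, and then feed $g\to0$ into \eqref{ODEg} to argue as in the proof of Lemma~\ref{lem41}(d) that $g''$ tends to a strictly positive limit (since $g^{1/(p-1)}$ dominates $(N-1)g/r^2$ eventually is false — both go to zero, so I need the more careful asymptotic balance), forcing $g'\to+\infty$, contradicting $g\to0$ with $g$ monotone.

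The main obstacle I anticipate is precisely this last contradiction: turning "$w'$ becomes $\le 0$ while $w$ stays positive" into "$w$ must vanish". The naive integrating-factor bound degenerates because the coefficient $1+(N-1)/r$ produces an exponentially growing factor that swamps the zero-order term, so one cannot simply conclude $w'$ is bounded below by a negative constant. The right fix is to work on the level of $g$ rather than $w$: there the zero-order coefficient is absent from the first-order part in a favourable way, and one should show that if $g>0$ everywhere and $g'$ ever becomes negative, then examining $(r^{N-1}g')' \cdot$(appropriate factor) together with the positivity of the source term $g^{1/(p-1)}$ forces $g$ to decrease at a rate incompatible with remaining positive — essentially re-running the $l_f>0$-to-contradiction mechanism of Lemma~\ref{lem41}(d), now applied to $g$ in place of $f$. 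Getting the bookkeeping of the weights $\varrho$, $r^{N-1}$, and $e^r$ right in that step is the delicate point; everything before it is a routine continuity/monotonicity argument.
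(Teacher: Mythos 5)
Your setup is the right one and matches the paper's proof up to the last step, but you abandon the winning argument just before the finish line because of a sign error. On $(r_0,\infty)$ you have $w>0$ and $w'<0$, hence by \eqref{ODEw}
\begin{equation*}
w''(r) = \left(1+\frac{N-1}{r}\right)w'(r) - \varrho(r)^{-(2-p)/(p-1)}w(r)^{1/(p-1)} < 0\ ,
\end{equation*}
since \emph{both} terms on the right are negative. So $w'$ is decreasing wherever it is negative; fixing any $r_1>r_0$ gives $w'(r)\le w'(r_1)<0$ for all $r\ge r_1$, whence $w(r)\le w(r_1)+w'(r_1)(r-r_1)$, and $w$ must vanish at some finite $r$ --- the desired contradiction with $R_1(a)=\infty$. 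This is exactly the paper's argument. Your own integrating-factor computation already says the same thing: from $w''<b(r)w'$ with $w'<0$ you get $w'(r)\le w'(r_1)\exp\left(\int_{r_1}^r b(s)\,ds\right)$, and since $w'(r_1)<0$ and the exponential factor is $\ge 1$, this bound is \emph{more} negative than $w'(r_1)$, not closer to zero. You read the growing exponential as weakening the bound, whereas multiplying a negative number by a factor $\ge 1$ strengthens it; that is the only genuine error, but it is what sends you off course.

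The detour you then propose --- passing back to $g$, invoking Lemma~\ref{lem41}(d), and trying to show that $g''$ tends to a strictly positive limit --- does not close the gap, and you correctly diagnose why yourself: both $g^{1/(p-1)}$ and $(N-1)g/r^2$ tend to zero, so neither term dominates and the mechanism of Lemma~\ref{lem41}(d) (which relied on $f\to l_f>0$) has no analogue here. As written, the proposal therefore does not contain a complete proof of the ``only if'' direction. The repair is simply to delete the detour and keep your original differential inequality: the source term $\varrho^{-(2-p)/(p-1)}w^{1/(p-1)}>0$ makes $w''$ strictly negative once $w'\le 0$ and $w>0$, and the resulting linear decrease of $w$ finishes the job. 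The ``if'' direction as you state it ($w'>0$ implies $w>w(0)=0$, hence $g>0$) is correct and is also how the paper concludes.
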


\begin{proof}
We derive from \eqref{ICg} and \eqref{f3} that
$$
w'(r)=\varrho(r)\left[\left(1 + \frac{N-1}{r} \right)g(r)+g'(r)\right]\sim\varrho(r)\left[ \left( 1 + \frac{N-1}{r} \right) \frac{ar}{N} + \frac{a}{N} \right]\sim
a\varrho(r)
$$
as $r\to 0$. Therefore, $w'>0$ in a right neighborhood of $r=0$.
Define then
$$
r_0:=\sup\{R>0: w'>0 \ {\rm in} \ (0,R)\} > 0.
$$
Assume first that $R_1(a)=\infty$ and assume for contradiction that
$r_0<\infty$. Then $w''(r_0)<0$ by \eqref{ODEw}, thus $w'$ is
negative in a right neighborhood of $r_0$. In addition,
\eqref{ODEw} guarantees that $w''(r)<0$ for any $r>0$ such that $w'(r)<0$, a property which readily implies that $w'(r)<0$ and $w''(r)<0$ for $r\in(r_0,\infty)$. Now fix $r_1>r_0$. Then $w'(r)\leq w'(r_1)<0$ for $r\geq r_1$, hence
$$
w(r)\leq  w(r_1)+w'(r_1)(r-r_1), \qquad r\geq r_1,
$$
which implies that $w(r)$ vanishes at a finite $r>r_1$,
contradicting the assumption $R_1(a)=\infty$. Consequently,
$r_0=\infty$ and $w'>0$ in $(0,\infty)$.

The converse assertion is obvious: if $w'>0$ in $(0,\infty)$, then $w(r)>w(0)=0$ for $r\in (0,\infty)$ and $R_1(a)=\infty$.
\end{proof}

We now split the range of $a$ into the following sets:
\begin{eqnarray*}
&&A:=\{a\in(0,\infty): R_1(a)<\infty\}, \\
&&B:=\{a\in(0,\infty): R_1(a)=\infty \ {\rm and} \ w(\cdot;a) \ {\rm
is \
bounded}\},\\
&&C:=\{a\in(0,\infty): R_1(a)=\infty \ {\rm and} \ w(\cdot;a) \ {\rm
is \ unbounded}\}
\end{eqnarray*}
From Lemma \ref{lem43} we infer that $A\cup B\cup C=(0,\infty)$ and, owing to the monotonicity and positivity of $w(\cdot;a)$ for $a\in B\cup C$,
\begin{equation}
\lim\limits_{r\to\infty}w(r;a)=\left\{
\begin{array}{ll}
l(a)\in(0,\infty) & {\rm if} \ a\in B\ , \\
& \\
\infty & {\rm if} \ a\in C\ .
\end{array}
\right. \label{limitw}
\end{equation}
The following result relates $R_1(a)$ and $R(a)$.

\begin{lemma}\label{lem44}
We have $R_1(a)<\infty$ if and only if $R(a)<\infty$.
\end{lemma}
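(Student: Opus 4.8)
The claim to prove is: $R_1(a)<\infty$ if and only if $R(a)<\infty$, where $R(a)$ is the first zero of $f(\cdot;a)$ and $R_1(a)$ is the first zero of $g(\cdot;a)=|f'(\cdot;a)|^{p-1}$ (equivalently, the first interior critical point of $f$). One direction is already essentially in hand: by \eqref{r1a} we always have $R_1(a)\ge R(a)$, so $R_1(a)<\infty$ is immediate whenever $R(a)<\infty$. Thus the whole content of the lemma is the reverse implication: if $g(\cdot;a)$ vanishes at some finite radius, then so does $f(\cdot;a)$.

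**Strategy for the nontrivial direction.** I would argue by contraposition: assume $R(a)=\infty$, i.e. $f(\cdot;a)>0$ on all of $(0,\infty)$, and show $R_1(a)=\infty$, i.e. $g(\cdot;a)=-|f'|^{p-2}f'>0$ on $(0,\infty)$, which by Lemma~\ref{lem41}(a) means $f'<0$ throughout $(0,\infty)$. Suppose for contradiction that $R_1(a)<\infty$; since $f'<0$ on $(0,R(a))=(0,\infty)$ is exactly what we want to contradict, the failure must be that $f'$ vanishes at some first point. Concretely, on $(0,R_1(a))$ we have $f>0$ and $f'<0$, and at $r=R_1(a)$ either $g$ and hence $f'$ vanishes, or the solution ceases — but by Lemma~\ref{lem41}(c) the solution is global, so $f'(R_1(a))=0$ while $f(R_1(a))>0$ (strictly positive, since $R(a)=\infty$). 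Now I would return to the conservation-type identity already derived in the proof of Lemma~\ref{lem41}(b):
$$
\varrho(r)|f'(r)|^{p-2}f'(r)+\int_0^r\varrho(\sigma)f(\sigma)\,d\sigma=0
$$
valid as long as $f'<0$; letting $r\uparrow R_1(a)$ and using $f'(R_1(a))=0$ forces $\int_0^{R_1(a)}\varrho(\sigma)f(\sigma)\,d\sigma=0$, which is impossible since $f>0$ on $(0,R_1(a))$ and $\varrho>0$. This contradiction shows $R_1(a)=\infty$, completing the contrapositive.

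**An alternative packaging.** If one prefers to phrase things forward rather than by contraposition, the same computation shows directly that $g$ cannot vanish before $f$ does: on $(0,\min\{R(a),R_1(a)\})$ the identity $g(r)\varrho(r)=\int_0^r\varrho(\sigma)f(\sigma)\,d\sigma>0$ holds, so $g$ stays positive as long as $f$ does, which gives $R_1(a)\ge R(a)$ with the additional information that if $R(a)=\infty$ the integral grows without the chance of $g$ reaching $0$. Either way the proof is short and reuses the first-order formulation $g'(r)+\frac{N-1}{r}g(r)+f(r)-|g(r)|=0$ together with the sign of $\varrho$. I do not expect any serious obstacle here: the only mild care needed is to check that the integral identity is legitimate up to and including the endpoint $R_1(a)$, which follows from the global existence in Lemma~\ref{lem41}(c) and the continuity of $f$, $f'$, hence of $g$, on $[0,\infty)$. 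The genuinely substantive work — namely that such an $a$ with $R(a)<\infty$ (or $R_1(a)<\infty$) exists at all — has already been dispatched in Lemma~\ref{lem42}, so this lemma is a structural bookkeeping step linking the two notions of "first zero."
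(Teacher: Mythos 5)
Your proof has the two directions of the equivalence swapped, and as a result the substantive half of the lemma is never proved. From \eqref{r1a} you correctly recall that $R_1(a)\ge R(a)$, but this inequality gives $R(a)\le R_1(a)<\infty$ whenever $R_1(a)<\infty$ --- that is, it makes the implication $R_1(a)<\infty\Rightarrow R(a)<\infty$ the immediate one. It does \emph{not} give the implication you claim is ``already essentially in hand,'' namely $R(a)<\infty\Rightarrow R_1(a)<\infty$: a finite \emph{lower} bound on $R_1(a)$ says nothing about whether $R_1(a)$ is finite. Consequently, the direction you then prove carefully (that $f>0$ on all of $(0,\infty)$ forces $g>0$ on all of $(0,\infty)$, via the identity $\varrho(r)g(r)=\int_0^r\varrho(\sigma)f(\sigma)\,d\sigma$) is just a re-derivation of $R_1(a)\ge R(a)$, i.e.\ of the easy direction, while the genuinely nontrivial direction is dismissed with a non sequitur.

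The missing argument is: if $R(a)<\infty$, then $R_1(a)<\infty$. This requires following the solution \emph{past} the first zero of $f$. Since $f'(R(a);a)<0$, one has $g(R(a);a)>0$ and hence $R_1(a)>R(a)$; on $(R(a),R_1(a))$ the function $f$ is negative and decreasing. Fixing $\theta\in(R(a),R_1(a))$ and using \eqref{f2} in the form $(\varrho g)'(r)=\varrho(r)f(r)\le\varrho(r)f(\theta)$ for $r\ge\theta$, integration gives
\begin{equation*}
\varrho(r)g(r)\le\varrho(\theta)g(\theta)+f(\theta)\int_\theta^r\varrho(s)\,ds\ ,
\end{equation*}
and since $f(\theta)<0$ while $\int_\theta^r\varrho(s)\,ds\to\infty$ as $r\to\infty$, the right-hand side becomes negative for $r$ large, contradicting $g>0$ on $(0,R_1(a))$ if $R_1(a)=\infty$. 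This is the content of the paper's proof and is precisely the step your proposal omits.
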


\begin{proof}
Recall first that $R(a)\leq R_1(a)$ by \eqref{r1a}, so that the finiteness of $R_1(a)$ implies that of $R(a)$. Assume next that $R(a)<\infty$. Then $f'(R(a))<0$, which implies that $g(R(a))>0$ and thus
$R_1(a)>R(a)$. Furthermore, $g>0$ in $(R(a),R_1(a))$ which entails that $f(r)<f(R(a))=0$ for $r\in (R(a),R_1(a))$. Fix $\theta\in(R(a),R_1(a))$. Since
$f(\theta)<0$ and $f'<0$ in $(R(a),R_1(a))$, we obtain from \eqref{f2} and the definition of the function $g$ that
$$
\frac{d}{dr}(\varrho(r)g(r))=\varrho(r)f(r)\leq\varrho(r)f(\theta),
\qquad r\in[\theta,R_1(a)),
$$
whence, by integration,
$$
\varrho(r)g(r)\leq\varrho(\theta)g(\theta)+ f(\theta)\, \int_{\theta}^r\varrho(s) \,ds\ , \qquad r\in (\theta,R_1(a))\ .
$$
Since $f(\theta)<0$ and $\varrho(r)\to\infty$ as $r\to\infty$, the right-hand side of the above inequality is negative for $r$ large enough, which excludes that $R_1(a)=\infty$. Therefore $R_1(a)<\infty$ and the proof is complete.
\end{proof}

\begin{corollary}\label{corA}
The set $A$ is non-empty and open.
\end{corollary}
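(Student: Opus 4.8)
The plan is to deduce non-emptiness directly from the two preceding lemmas and to obtain openness by showing that membership in $A$ is governed by a strict, hence perturbation-stable, inequality.

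Non-emptiness is immediate: Lemma~\ref{lem42} produces $a_\infty>0$ with $R(a)\in(0,\infty)$ for every $a>a_\infty$, and Lemma~\ref{lem44} converts this into $R_1(a)<\infty$ for the same values of $a$. Hence $(a_\infty,\infty)\subseteq A$ and $A\neq\emptyset$.

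For openness I would fix $a_0\in A$ and first exhibit a radius $r_1$ at which $f(\cdot;a_0)$ is \emph{strictly} negative. By Lemma~\ref{lem44}, $R_1(a_0)<\infty$ forces $R(a_0)<\infty$, and Lemma~\ref{lem41}~(a) gives $f(\cdot;a_0)>0$ and $f'(\cdot;a_0)<0$ on $(0,R(a_0))$, so that $f(R(a_0);a_0)=0$ and $f'(R(a_0);a_0)\le 0$ by continuity. The equality $f'(R(a_0);a_0)=0$ is impossible: it would make the pair $(f,g)(\cdot;a_0)$ of the first-order reformulation of \eqref{ODE} (with $g=-|f'|^{p-2}f'$) vanish at $R(a_0)$, and since $(0,0)$ also solves that system on $(0,\infty)$, uniqueness would force $f(\cdot;a_0)\equiv 0$, contradicting $f(0;a_0)=a_0>0$. (Alternatively, letting $r\to R(a_0)$ in the identity $\varrho(r)|f'(r;a_0)|^{p-2}f'(r;a_0)=-\int_0^r\varrho(\sigma)f(\sigma;a_0)\,d\sigma$ obtained by integrating \eqref{f2} immediately yields $f'(R(a_0);a_0)<0$.) Since $\mathcal{R}(a_0)=\infty$ by Lemma~\ref{lem41}~(c), the strict inequality $f'(R(a_0);a_0)<0$ then provides some $r_1>R(a_0)$ with $f(r_1;a_0)<0$.

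It then remains to propagate this to nearby shooting parameters. Continuous dependence of the solution of \eqref{ODE}-\eqref{IC} on $a$, uniformly on the compact interval $[0,r_1]$ (as already used in the proof of Lemma~\ref{lem42}), yields $\delta>0$ such that $f(r_1;a)<0$ for all $a\in(a_0-\delta,a_0+\delta)$. For such $a$ the continuous function $f(\cdot;a)$ satisfies $f(0;a)=a>0>f(r_1;a)$, hence vanishes somewhere in $(0,r_1)$, so $R(a)\le r_1<\infty$; Lemma~\ref{lem44} then gives $R_1(a)<\infty$, that is, $a\in A$. Thus $(a_0-\delta,a_0+\delta)\subseteq A$ and $A$ is open. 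The only delicate point is securing the \emph{strict} negativity of $f(r_1;a_0)$, namely ruling out that $f(\cdot;a_0)$ merely touches zero without crossing it; this is exactly what the no-double-zero argument (or the integral identity) above achieves. The continuous-dependence step must be taken up to and including $r=0$ in spite of the $1/r$ coefficient in \eqref{ODE}, but this is part of the well-posedness recorded at the beginning of Section~\ref{sec.s4} and raises no new issue.
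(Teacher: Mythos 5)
Your proof is correct and follows essentially the same route as the paper, which simply invokes Lemma~\ref{lem42} and Lemma~\ref{lem44} for non-emptiness and ``continuous dependence'' for openness. The only difference is that you spell out the detail the paper leaves implicit --- that $f(\cdot;a_0)$ crosses zero transversally, secured by the integral identity $\varrho(r)|f'|^{p-2}f'(r;a_0)=-\int_0^r\varrho(\sigma)f(\sigma;a_0)\,d\sigma$ --- which is exactly the right point to make precise.
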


\begin{proof}
The fact that $A$ is open follows by continuous dependence with
respect to the parameter $a$. Moreover, we infer from Lemma
\ref{lem42} and Lemma \ref{lem44} that there exists $a_{\infty}>0$
such that $(a_{\infty},\infty)\subseteq A$, hence $A$ is non-empty.
\end{proof}

\begin{lemma}\label{lem45}
Let $a\in B\cup C$. Then
\begin{eqnarray}
&&\frac{w(\cdot;a)}{\varrho}\in L^{1/(p-1)}(0,\infty), \qquad
\lim\limits_{r\to\infty}\frac{w(r;a)}{\varrho(r)}=0, \label{f4} \\
&&\lim\limits_{r\to\infty}\frac{w'(r;a)}{\varrho(r)}=0.\label{f5}
\end{eqnarray}
\end{lemma}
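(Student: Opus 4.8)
The plan is to reduce both \eqref{f4} and \eqref{f5} to the asymptotic behaviour of $f$ and $f'$ already recorded in Lemma~\ref{lem41}. First I would collect the structural facts available for $a\in B\cup C$: by definition $R_1(a)=\infty$, so Lemma~\ref{lem44} forces $R(a)=\infty$ as well, and then Lemma~\ref{lem41}~(a) gives $0<f(r;a)<a$ and $f'(r;a)<0$ for all $r>0$ (so that $g(\cdot;a)=|f'(\cdot;a)|^{p-1}>0$ and $w(\cdot;a)/\varrho=g(\cdot;a)$), while Lemma~\ref{lem41}~(d) yields $\lim_{r\to\infty}f(r;a)=\lim_{r\to\infty}f'(r;a)=0$.

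For \eqref{f4}, since $f'(\cdot;a)<0$ on $(0,\infty)$ we have $\bigl(w(r;a)/\varrho(r)\bigr)^{1/(p-1)}=g(r;a)^{1/(p-1)}=-f'(r;a)$, hence
\[
\int_0^\infty \left(\frac{w(r;a)}{\varrho(r)}\right)^{1/(p-1)}\,dr = \int_0^\infty \bigl(-f'(r;a)\bigr)\,dr = f(0;a) - \lim_{r\to\infty} f(r;a) = a < \infty\ ,
\]
which gives the claimed $L^{1/(p-1)}$-membership; the remaining limit follows from $w(r;a)/\varrho(r)=|f'(r;a)|^{p-1}\to 0$ by Lemma~\ref{lem41}~(d).

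For \eqref{f5}, I would differentiate $w=\varrho g$ using $\varrho'/\varrho=1+(N-1)/r$ to obtain
\[
\frac{w'(r;a)}{\varrho(r)} = \left(1+\frac{N-1}{r}\right) g(r;a) + g'(r;a)\ , \qquad r>0\ .
\]
Now the first order relation $-g'-(1+(N-1)/r)g+f=0$, valid on $(0,R_1(a))=(0,\infty)$ and already used to derive \eqref{ODEg}, allows me to substitute $g'(r;a)=f(r;a)-(1+(N-1)/r)g(r;a)$; the two $g$-terms cancel and I am left with $w'(r;a)/\varrho(r)=f(r;a)$, which tends to $0$ as $r\to\infty$ by Lemma~\ref{lem41}~(d). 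The argument is essentially bookkeeping built on the earlier lemmas; the only point requiring attention is that the hypothesis $a\in B\cup C$ is precisely what ensures $R_1(a)=\infty$, hence $R(a)=\infty$ and the applicability of Lemma~\ref{lem41}~(d), so no genuine obstacle arises.
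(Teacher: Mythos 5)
Your proof is correct, and it takes a more elementary route than the paper's. You reduce everything to the already-established asymptotics of $f$ and $f'$ via the identities $w/\varrho=g=|f'|^{p-1}$ (so that $(w/\varrho)^{1/(p-1)}=-f'$ integrates exactly to $a$) and $w'/\varrho=f$ (obtained from the first-order relation $-g'-(1+(N-1)/r)g+f=0$), after which Lemma~\ref{lem41}~(d) --- applicable because $a\in B\cup C$ forces $R_1(a)=\infty$ and hence $R(a)=\infty$ by Lemma~\ref{lem44} --- delivers both limits. The paper instead works entirely at the level of $w$: from $\frac{d}{dr}\left(w'/\varrho\right)=-\left(w/\varrho\right)^{1/(p-1)}$ and the positivity of $w'$ it deduces that $w'/\varrho$ is non-increasing with some limit $L\ge 0$, integrates to get $a-L=\int_0^\infty\left(w/\varrho\right)^{1/(p-1)}\,dr$ (hence the $L^{1/(p-1)}$ membership), then uses $\varrho'\sim\varrho$ and l'Hospital's rule to conclude $w/\varrho\to L$, and finally invokes the integrability to force $L=0$. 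The two computations are equivalent --- the paper's identity $a-L=\int_0^\infty\left(w/\varrho\right)^{1/(p-1)}\,dr$ is exactly your $\int_0^\infty(-f')\,dr=a$ once one knows $L=0$ --- but yours is shorter because Lemma~\ref{lem41}~(d) already supplies the vanishing limits, whereas the paper re-derives them internally from the $w$-equation; note also that the identity $w'/\varrho=f$ you rely on is the same one the paper records later in the proof of Corollary~\ref{cor.2}, so nothing outside the paper's toolbox is needed.
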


\begin{proof}
It readily follows from \eqref{ODEw} that
\begin{equation}\label{interm7}
\frac{d}{dr}\left(\frac{w'(r)}{\varrho(r)}\right)=-\left(\frac{1}{\varrho(r)}\right)^{1+(2-p)/(p-1)}w(r)^{1/(p-1)}=-\left(\frac{w(r)}{\varrho(r)}\right)^{1/(p-1)}.
\end{equation}
Since $w'>0$ in $(0,\infty)$ for $a\in B\cup C$ by Lemma
\ref{lem43}, we infer from \eqref{interm7} that $w'(r)/\varrho(r)$
is non-increasing and non-negative. Consequently, there exists
$L\geq0$ such that
\begin{equation}\label{interm8}
\lim\limits_{r\to\infty}\frac{w'(r)}{\varrho(r)}=L.
\end{equation}
Since $w'(r)/\varrho(r)\to a$ as $r\to0$ by \eqref{ICw}, it readily follows from \eqref{interm7} and \eqref{interm8} that
\begin{equation}\label{interm9}
a-L=\int_0^{\infty}\left(\frac{w(r)}{\varrho(r)}\right)^{1/(p-1)}\,dr,
\end{equation}
which in particular gives that $w/\varrho\in L^{1/(p-1)}(0,\infty)$. Furthermore
\begin{equation}
\varrho'(r)=\left(1+\frac{N-1}{r}\right)\varrho(r)\sim\varrho(r) \quad {\rm as} \ r\to\infty, \label{asymptrho}
\end{equation}
and we deduce from \eqref{interm8} that $w'(r)/\varrho'(r)\to L$ as $r\to\infty$. Applying l'Hospital rule then gives
$$
\lim\limits_{r\to\infty}\frac{w(r)}{\varrho(r)}=L\ ,
$$
and the integrability of $(w/\varrho)^{1/(p-1)}$ implies $L=0$, which completes the proof of \eqref{f4} and \eqref{f5}.
\end{proof}

The next result goes deeper into the characterization of elements in the sets $B$ and $C$.

\begin{lemma}\label{lem46}
If $a\in B\cup C$, then $w'(\cdot;a)/w(\cdot;a)$ has a limit as $r\to\infty$ and
\begin{equation}\label{f6}
\lim\limits_{r\to\infty}\frac{w'(r;a)}{w(r;a)}\in\{0,1\}.
\end{equation}
\end{lemma}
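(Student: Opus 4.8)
The plan is to pass from $w$ to its logarithmic derivative $z:=w'(\cdot;a)/w(\cdot;a)$ and to analyse the scalar first order equation it satisfies. Since $a\in B\cup C$ means $R_1(a)=\infty$, Lemma~\ref{lem43} gives $w'(\cdot;a)>0$ on $(0,\infty)$, and as $w(0;a)=0$ this forces $w(\cdot;a)>0$ on $(0,\infty)$; moreover \eqref{ODEw} shows $w(\cdot;a)\in C^2((0,\infty))$, so $z$ is a well-defined positive $C^1$ function on $(0,\infty)$. Differentiating $z=w'/w$ and inserting the value of $w''$ provided by \eqref{ODEw}, one obtains the Riccati-type identity
$$
z'(r)=z(r)\bigl(1-z(r)\bigr)+\frac{N-1}{r}\,z(r)-\phi(r),\qquad \phi(r):=\Bigl(\frac{w(r)}{\varrho(r)}\Bigr)^{(2-p)/(p-1)}\ge 0,
$$
and, by Lemma~\ref{lem45}, $\phi(r)\to0$ as $r\to\infty$. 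The equilibria of the limiting autonomous equation $z'=z(1-z)$ are exactly $0$ and $1$, so any limit of $z$ can only be $0$ or $1$; the whole difficulty is therefore to prove that $z$ \emph{converges}, and the main obstacle is to rule out an oscillation of $z$ between a neighbourhood of $0$ and a neighbourhood of $1$.

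The first step is a pair of elementary barrier (invariant-interval) arguments. For $r\ge N-1$ one has $1+(N-1)/r\le2$, hence $z(r)\ge2$ implies $z'(r)=z(r)\bigl(1+(N-1)/r-z(r)\bigr)-\phi(r)\le0$; the usual "first crossing" argument then yields that $z$ is bounded on $[N-1,\infty)$. Using this bound together with $\phi\to0$, for every $\delta>0$ there is $R_\delta$ such that $z(r)\ge1+\delta$ and $r\ge R_\delta$ imply $z'(r)<0$, so $z$ can neither stay above nor cross upward the level $1+\delta$ beyond $R_\delta$; hence $\limsup_{r\to\infty}z(r)\le1+\delta$ for all $\delta>0$, that is, $\limsup_{r\to\infty}z(r)\le1$.

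The core step is the following dichotomy. Given $\e\in(0,1/4)$, let $a_\e<b_\e$ be the two roots in $(0,1)$ of $s(1-s)=\e$, and choose $R_\e$ with $\phi(r)<\e$ for $r\ge R_\e$. Then for $r\ge R_\e$ and $z(r)\in[a_\e,b_\e]$ we have $z'(r)\ge z(r)(1-z(r))-\phi(r)\ge\e-\phi(r)>0$, so $z$ is strictly increasing whenever it lies in $[a_\e,b_\e]$ and $r\ge R_\e$. Consequently, if $z(r_0)\ge a_\e$ for some $r_0\ge R_\e$, then $\liminf_{r\to\infty}z(r)\ge b_\e$: while $z$ stays in $[a_\e,b_\e)$ it increases, and it cannot converge there to a limit $L\in(a_\e,b_\e]$ since then $z'\to L(1-L)>0$; thus $z$ reaches $b_\e$ in finite time, and afterwards it can never re-cross $b_\e$ downward because $z'>0$ whenever $z=b_\e$ and $r\ge R_\e$. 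Taking contrapositives, for each $\e\in(0,1/4)$ \emph{either} $\liminf_{r\to\infty}z(r)\ge b_\e$ \emph{or} $z(r)<a_\e$ for all $r\ge R_\e$, the latter forcing $\limsup_{r\to\infty}z(r)\le a_\e$.

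To conclude: if the first alternative holds for every small $\e$, then $\liminf_{r\to\infty}z(r)\ge b_\e$ for all $\e\in(0,1/4)$, and since $b_\e\to1$ as $\e\to0^+$ this gives $\liminf z\ge1$; combined with $\limsup z\le1$ we obtain $z(r)\to1$. Otherwise the second alternative holds for some $\e_0\in(0,1/4)$, so $\limsup_{r\to\infty}z(r)\le a_{\e_0}<1/2<b_\e$ for \emph{every} $\e\in(0,1/4)$, which makes the first alternative impossible for each such $\e$; hence $\limsup_{r\to\infty}z(r)\le a_\e$ for all $\e\in(0,1/4)$, and letting $\e\to0^+$ yields $\limsup z\le0$, whence $z(r)\to0$ because $z>0$. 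In either case $w'(\cdot;a)/w(\cdot;a)=z$ has a limit, equal to $0$ or $1$, which is \eqref{f6}. I expect the barrier argument in the third paragraph to be the delicate part, since it is exactly what excludes the oscillatory scenario; note also that it treats $a\in B$ and $a\in C$ simultaneously, the identification of which of the two values $0$ or $1$ occurs in each set being left to the subsequent analysis.
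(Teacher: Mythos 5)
Your proof is correct and follows essentially the same route as the paper: the same logarithmic-derivative substitution $h=w'/w$ leading to the same Riccati equation, the same dichotomy built on the two roots of $s(1-s)=\e$ (the paper uses $s(1-s)=\e^2/4$ with roots $r_{1,2}(\e)=(1\mp\sqrt{1-\e^2})/2$) forcing $h$ to climb from the lower root to the upper one once it exceeds the former, and the same limit $\e\to 0$ to conclude. The only cosmetic differences are that the paper obtains $\limsup h\le 1$ by integrating $\frac{d}{dr}(h/\varrho)\le -h^2/\varrho$ rather than by your barrier argument, and it integrates the logistic inequality explicitly (via $\log[(h-r_1)/(r_2-h)]$) instead of your softer ``no interior limit'' argument.
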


\begin{proof}
Introducing $h:=w'/w$, it follows from \eqref{ODEw} and
easy algebraic manipulations that
\begin{equation}\label{ODEh}
h'(r)=\left(1+\frac{N-1}{r}\right)h(r)-h(r)^2-\left(\frac{w(r)}{\varrho(r)}\right)^{(2-p)/(p-1)},
\quad r>0.
\end{equation}
We infer from \eqref{ODEh} that
$$
\frac{d}{dr}\left(\frac{h(r)}{\varrho(r)}\right)=\frac{1}{\varrho(r)}\left[h'(r)-\left(1+\frac{N-1}{r}\right)h(r)\right]\leq-\frac{1}{\varrho(r)}h(r)^2,
$$
which can be written alternatively as
$$
\frac{d}{dr}\left(\frac{h(r)}{\varrho(r)}\right)+\varrho(r)\left(\frac{h(r)}{\varrho(r)}\right)^2\leq0
$$
or equivalently
$$
\varrho(r)\leq\frac{d}{dr}\left(\frac{\varrho(r)}{h(r)}\right).
$$
Noticing that
$$
\frac{\varrho(r)}{h(r)}=\frac{\varrho(r)}{w'(r)}w(r)\sim\frac{w(r)}{a}\to 0 \quad {\rm as} \ r\to 0
$$
by \eqref{ICw}, we may integrate the previous differential inequality and find
\begin{equation}\label{interm11}
h(r)\leq\varrho(r)\left(\int_0^r\varrho(s)\,ds\right)^{-1}, \quad
r\in(0,\infty).
\end{equation}
Since $\varrho'(r)/\varrho(r)\to1$ as $r\to\infty$ by \eqref{asymptrho}, the l'Hospital
rule applies and shows that
$$
\lim\limits_{r\to\infty}\varrho(r)\left(\int_0^r\varrho(s)\,ds\right)^{-1}=\lim\limits_{r\to\infty}\frac{\varrho'(r)}{\varrho(r)}=1,
$$
and we conclude from \eqref{interm11} that
\begin{equation}\label{interm12}
\limsup\limits_{r\to\infty}h(r)\leq1.
\end{equation}
Fix now $\e\in(0,1)$. According to \eqref{f4} and \eqref{interm12},
there exists $r_{\e}>0$ such that
\begin{equation}\label{interm13}
\left(\frac{w(r)}{\varrho(r)}\right)^{(2-p)/(p-1)}\leq\frac{\e^2}{4},
\quad r\geq r_{\e},
\end{equation}
and
\begin{equation}\label{interm14}
h(r)<r_2(\e):=\frac{1+\sqrt{1-\e^2}}{2}, \quad r\geq r_{\e}.
\end{equation}
Assume further that there exists $r_*>r_{\e}$ such that
\begin{equation}\label{interm15}
h(r_*) > r_1(\e):=\frac{1-\sqrt{1-\e^2}}{2},
\end{equation}
and define
$$
R_*:=\inf\{r>r_*: h(r)<r_1(\e)\}>r_*.
$$
For $r\in[r_*,R_*)$ we deduce from \eqref{ODEh}, \eqref{interm13},
\eqref{interm14}, and the definition of $R_*$ that
\begin{equation}\label{interm16}
h'(r)\geq h(r)-h(r)^2-\frac{\e^2}{4}=(r_2(\e)-h(r))(h(r)-r_1(\e))>0.
\end{equation}
It thus follows from \eqref{interm16} that $h$ is increasing in
$[r_*,R_*)$, which readily implies that $R_*=\infty$. Moreover, we
also get from \eqref{interm16} that
\begin{equation*}
\begin{split}
\frac{d}{dr}\log\left(\frac{h(r)-r_1(\e)}{r_2(\e)-h(r)}\right)&=\left(\frac{1}{h(r)-r_1(\e)}+\frac{1}{r_2(\e)-h(r)}\right)h'(r)\\
&=\frac{r_2(\e)-r_1(\e)}{(h(r)-r_1(\e))(r_2(\e)-h(r))}h'(r)\geq\sqrt{1-\e^2},
\end{split}
\end{equation*}
whence, by integration over $(r_*,r)$,
$$
\frac{h(r)-r_1(\e)}{r_2(\e)-h(r)}\geq K_{\e}e^{r\sqrt{1-\e^2}},
\quad
K_{\e}:=\frac{h(r_*)-r_1(\e)}{r_2(\e)-h(r_*)}e^{-r_*\sqrt{1-\e^2}}>0,
$$
the latter being true due to \eqref{interm14} and \eqref{interm15}.
Therefore,
\begin{equation*}\begin{split}
h(r)&\geq\frac{r_2(\e)K_{\e}e^{r\sqrt{1-\e^2}}+r_2(\e)+r_1(\e)-r_2(\e)}{1+K_{\e}e^{r\sqrt{1-\e^2}}}\\
&\geq r_2(\e)-\frac{\sqrt{1-\e^2}}{1+K_{\e}e^{r\sqrt{1-\e^2}}},
\end{split}\end{equation*}
for $r\geq r_*$. In particular,
\begin{equation}\label{interm17}
\liminf\limits_{r\to\infty}h(r)\geq
r_2(\e)=\frac{1+\sqrt{1-\e^2}}{2}\ ,
\end{equation}
this property being valid only if \eqref{interm15} holds true.

Suppose now that $h$ does not converge to zero as $r\to\infty$. Then
there exist $\mu\in(0,1/2)$ and a sequence $(r_j)_{j\ge 1}$ such that
$r_j\to\infty$ as $j\to\infty$ and
\begin{equation}\label{interm18}
h(r_j)\geq\mu, \quad j\geq1.
\end{equation}
Pick now $\e\in \left( 0,\sqrt{1-(1-2\mu)^2} \right)$. Then $(1-2\mu)<\sqrt{1-\e^2}$,
so that $r_1(\e)<\mu$. Also there exists $j_{\e}\geq1$ such that
$r_{j_{\e}}>r_{\e}$. Owing to \eqref{interm18} and the choice of
$\e$, it follows that $h$ satisfies \eqref{interm15} with
$r_*=r_{j_{\e}}$ and we deduce from the previous analysis that
\eqref{interm17} holds true. Since $\e>0$ can be picked as small as
we want in $\left( 0,\sqrt{1-(1-2\mu)^2} \right)$, we conclude that
$$
\liminf\limits_{r\to\infty}h(r)\geq1,
$$
which, together with \eqref{interm12}, leads to $h(r)\to1$ as
$r\to\infty$, completing the proof.
\end{proof}

As up to now all the previous steps were common to $a\in B$ and
$a\in C$, the following results introduce differences between the
sets $B$ and $C$.

\begin{lemma}\label{lem47}
Let $a\in B\cup C$. Then
$$
a\in B \ {\rm if \ and \ only \ if} \
\lim\limits_{r\to\infty}\frac{w'(r;a)}{w(r;a)}=0.
$$
Furthermore, for $a\in B$,
\begin{equation}\label{f7}
w'(r;a)\sim (p-1)l(a)^{1/(p-1)}\varrho(r)^{-(2-p)/(p-1)} \quad {\rm
as} \ r\to\infty,
\end{equation}
where $l(a)$ is defined in \eqref{limitw}.
\end{lemma}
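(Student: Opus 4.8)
The entire argument rests on one identity. Integrating \eqref{interm7} over $(r,\infty)$ and using $w'(r;a)/\varrho(r)\to 0$ from \eqref{f5}, one obtains the representation formula
$$\frac{w'(r;a)}{\varrho(r)} = \int_r^\infty \left(\frac{w(s;a)}{\varrho(s)}\right)^{1/(p-1)}\,ds, \qquad r>0,$$
which I will call $(\star)$; note that this also re-proves convergence of the integral, cf. \eqref{f4}. Two elementary facts will be used repeatedly: first, $\varrho'(r)=(1+(N-1)/r)\varrho(r)\ge\varrho(r)$, so that $\int_{r_0}^r\varrho(s)\,ds\le\varrho(r)$ for all $r\ge r_0$; second, since $1/(p-1)>1$, l'Hospital's rule together with \eqref{asymptrho} gives $\int_r^\infty\varrho(s)^{-1/(p-1)}\,ds\sim(p-1)\varrho(r)^{-1/(p-1)}$ as $r\to\infty$. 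Recall also that, since $a\in B\cup C$, the function $w(\cdot;a)$ is positive and increasing on $(0,\infty)$ by Lemma~\ref{lem43}.

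\textbf{The implication $a\in B\Rightarrow w'/w\to 0$, and \eqref{f7}.} If $a\in B$, then $w(\cdot;a)$ increases to $l(a)\in(0,\infty)$ by \eqref{limitw}. Fix $\delta\in(0,1)$ and pick $R_\delta$ with $(1-\delta)l(a)\le w(s;a)\le l(a)$ for $s\ge R_\delta$. Inserting these bounds into $(\star)$, multiplying by $\varrho(r)^{(2-p)/(p-1)}$, and invoking the asymptotics of $\int_r^\infty\varrho^{-1/(p-1)}$ recalled above, one gets
$$(1-\delta)^{1/(p-1)}(p-1)\,l(a)^{1/(p-1)}\le\liminf_{r\to\infty}\varrho(r)^{(2-p)/(p-1)}w'(r;a)\le\limsup_{r\to\infty}\varrho(r)^{(2-p)/(p-1)}w'(r;a)\le(p-1)\,l(a)^{1/(p-1)}.$$
Letting $\delta\to 0$ yields \eqref{f7}; dividing \eqref{f7} by $w(r;a)\to l(a)$ and recalling $\varrho(r)\to\infty$ with $(2-p)/(p-1)>0$ gives $w'(r;a)/w(r;a)\to 0$. (Alternatively, the value $0$ of the limit follows from Lemma~\ref{lem46}, since a bounded $w$ cannot have $w'/w\to 1$.)

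\textbf{The converse $w'/w\to 0\Rightarrow a\in B$.} As $a\in B\cup C$, it suffices to show that $w(\cdot;a)$ is bounded. Set $\varepsilon:=(2-p)/2>0$. Since $w'(r;a)/w(r;a)\to 0$, there is $R_\varepsilon$ with $w'\le\varepsilon w$ on $[R_\varepsilon,\infty)$, hence $w(r;a)\le C_\varepsilon e^{\varepsilon r}$ for $r\ge R_\varepsilon$. Integrating $w'$ with the help of $(\star)$, exchanging the order of integration (all integrands are non-negative), and using $\int_{R_\varepsilon}^\sigma\varrho\le\varrho(\sigma)$, I obtain for $R>R_\varepsilon$
$$w(R;a)-w(R_\varepsilon;a)=\int_{R_\varepsilon}^R\varrho(s)\left(\int_s^\infty\left(\frac{w(\sigma;a)}{\varrho(\sigma)}\right)^{1/(p-1)}\,d\sigma\right)ds\le\int_{R_\varepsilon}^\infty w(\sigma;a)^{1/(p-1)}\,\varrho(\sigma)^{-(2-p)/(p-1)}\,d\sigma.$$
Since $\varrho(\sigma)^{-(2-p)/(p-1)}=\sigma^{-(N-1)(2-p)/(p-1)}e^{-(2-p)\sigma/(p-1)}$ and $w(\sigma;a)^{1/(p-1)}\le C_\varepsilon^{1/(p-1)}e^{\varepsilon\sigma/(p-1)}$, the integrand is bounded by a constant times $\sigma^{-(N-1)(2-p)/(p-1)}e^{-(2-p-\varepsilon)\sigma/(p-1)}$, and $2-p-\varepsilon=(2-p)/2>0$, so the right-hand side is finite. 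Hence $\sup_{R>R_\varepsilon}w(R;a)<\infty$, i.e. $w(\cdot;a)$ is bounded and $a\in B$, which closes the equivalence.

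\textbf{Main obstacle.} The delicate step is this converse implication: the hypothesis $w'/w\to 0$ only forces $w$ to grow slower than $e^{\varepsilon r}$ for \emph{every} $\varepsilon>0$, which is a long way from boundedness. The point is that after the exchange of integrals in $(\star)$ one is left with the weight $\varrho^{-(2-p)/(p-1)}$, decaying exponentially at rate $(2-p)/(p-1)>0$; choosing $\varepsilon$ strictly below $2-p$ lets this decay absorb the sub-exponential growth of $w$. The only care needed is the non-negativity of all integrands (so that the exchange is legitimate) and tracking the polynomial factor $\sigma^{-(N-1)(2-p)/(p-1)}$, which is harmless for every $N\ge 1$.
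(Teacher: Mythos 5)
Your proof is correct and follows essentially the same route as the paper: both directions rest on the integrated form of \eqref{interm7}, namely $w'(r)/\varrho(r)=\int_r^\infty (w(s)/\varrho(s))^{1/(p-1)}\,ds$, combined with the asymptotics $\int_r^\infty\varrho^{-1/(p-1)}\sim(p-1)\varrho(r)^{-1/(p-1)}$ for the forward implication and, for the converse, the observation that the sub-exponential growth rate $\varepsilon<2-p$ is absorbed by the decay of $\varrho^{-(2-p)/(p-1)}$. The only (immaterial) differences are that the paper gets \eqref{f7} via l'Hospital and the incomplete Gamma function where you use a direct squeeze, and that it bounds $w'$ pointwise and invokes $w'\in L^1(r_\delta,\infty)$ where you integrate first and exchange the order of integration.
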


\begin{proof}
Assume first that $a\in B$. Then $l(a)\in (0,\infty)$ and it follows from \eqref{interm7} that
$$
\lim\limits_{r\to\infty}\varrho(r)^{1/(p-1)}\frac{d}{dr}\left(\frac{w'}{\varrho}\right)(r)=
-\lim\limits_{r\to\infty} w(r)^{1/(p-1)} =-l(a)^{1/(p-1)},
$$
hence we deduce by the l'Hospital rule that
\begin{equation}\label{interm19}
\lim\limits_{r\to\infty}\frac{w'(r)}{\varrho(r)} \left( \int_r^{\infty}\varrho(s)^{-1/(p-1)}\,ds \right)^{-1}=l(a)^{1/(p-1)}.
\end{equation}
We next observe that
\begin{equation*}
\begin{split}
\int_{r}^{\infty}\varrho(s)^{-1/(p-1)}\,ds&=\int_{r}^{\infty}s^{-(N-1)/(p-1)}e^{-s/(p-1)}\,ds\\
&=(p-1)^{(p-N)/(p-1)}\int_{r/(p-1)}^{\infty}\sigma^{(p-N)/(p-1)-1}e^{-\sigma}\,d\sigma\\&=(p-1)^{(p-N)/(p-1)}\ \Gamma\left(\frac{p-N}{p-1},\frac{r}{p-1}\right),
\end{split}
\end{equation*}
where $\Gamma(\cdot,\cdot)$ is the (upper) incomplete Gamma function defined by
$$
\Gamma(\sigma,y):=\int_{y}^{\infty} z^{\sigma-1}e^{-z}\,dz\ , \qquad (\sigma,y)\in \real\times (0,\infty)\ .
$$
Since
$$
\Gamma(\sigma,y)\sim y^{\sigma-1}e^{-y} \ {\rm as} \ y\to\infty\ ,
$$
we obtain that, as $r\to\infty$,
$$
\int_r^{\infty}\varrho(s)^{-1/(p-1)}\,ds\sim(p-1)^{(p-N)/(p-1)}\left(\frac{r}{p-1}\right)^{-(N-1)/(p-1)}e^{-r/(p-1)}
$$
and thus
\begin{equation}\label{interm20}
\varrho(r)\int_r^{\infty}\varrho(s)^{-1/(p-1)}\,ds\sim(p-1)\varrho(r)^{-(2-p)/(p-1)}.
\end{equation}
We then derive easily \eqref{f7} from \eqref{interm19} and
\eqref{interm20}. Moreover $w'(r)\to 0$ as $r\to\infty$ by \eqref{f7} and thus $\lim\limits_{r\to\infty}w'(r)/w(r)=0$.

Conversely, assume that $\lim\limits_{r\to\infty}w'(r)/w(r)=0$.
Given $\delta\in(0,2-p)$, there exists $r_{\delta}>0$ such that
$0<w'(r)/w(r)\leq\delta$ for $r\geq r_{\delta}$, whence
\begin{equation}\label{interm21}
0 < w(r)\leq w(r_{\delta}) e^{\delta(r-r_{\delta})}\ , \quad 0< w'(r)\leq \delta w(r_{\delta}) e^{\delta(r-r_{\delta})}\ , \qquad r\geq r_{\delta}.
\end{equation}
In particular, $w'(r)/\varrho(r)\to 0$ as $r\to\infty$ and it follows from \eqref{interm7} that
$$
w'(r) = \varrho(r) \int_r^\infty \left(\frac{w(s)}{\varrho(s)}\right)^{1/(p-1)}\, ds\ , \qquad r>r_\delta\ .
$$
Combining the above inequality with \eqref{interm21} gives, for $r>r_\delta$,
\begin{align*}
w'(r) & \le w(r_\delta)^{1/(p-1)} e^{-\delta r_\delta/(p-1)} \varrho(r) \int_r^\infty s^{-(N-1)/(p-1)} e^{(\delta-1)s/(p-1)} \, ds \\
& \le C(\delta) \varrho(r) e^{(\delta-1)r/(p-1)} \frac{p-1}{N-p} r^{(p-N)/(p-1)} \\
& \le C(\delta) r^{((N+1)p-2N)/(p-1)} e^{(\delta+p-2)r/(p-1)}\ .
\end{align*}
We deduce from the choice of $\delta$, the above inequality, and \eqref{interm21} that $w'$ belongs to $L^1(r_\delta,\infty)$. Therefore, $w$ has a finite limit as $r\to\infty$ and thus $a\in B$.
\end{proof}

The previous lemma also allows us to identify the asymptotic behavior of $w(\cdot;a)$ for
$a\in C$.

\begin{corollary}\label{cor.1}
Let $a\in B\cup C$. Then
$$
a\in C \ {\rm if \ and \ only \ if} \
\lim\limits_{r\to\infty}\frac{w'(r;a)}{w(r;a)}=1.
$$
Moreover, for $a\in C$ we have
\begin{equation}\label{f8}
w(r;a)\sim\varrho(r)\left(\frac{2-p}{p-1}r\right)^{-(p-1)/(2-p)}
\quad {\rm as} \ r\to\infty.
\end{equation}
\end{corollary}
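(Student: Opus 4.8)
The plan is to read off the equivalence directly from the two preceding lemmas, and then to extract the sharp decay rate of $w(\cdot;a)$ for $a\in C$ by a comparison argument for a suitable power of $w'/\varrho$.

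\textbf{Step 1: the equivalence.} Since $A\cup B\cup C=(0,\infty)$ and $B$, $C$ are disjoint, for $a\in B\cup C$ one has $a\in C$ if and only if $a\notin B$. By Lemma~\ref{lem47}, $a\in B$ if and only if $\lim_{r\to\infty} w'(r;a)/w(r;a)=0$; combining this with the dichotomy of Lemma~\ref{lem46}, according to which this limit always belongs to $\{0,1\}$ for $a\in B\cup C$, we immediately obtain that $a\in C$ if and only if $\lim_{r\to\infty} w'(r;a)/w(r;a)=1$.

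\textbf{Step 2: the asymptotics.} Fix $a\in C$ and set $\Phi:=w'(\cdot;a)/\varrho$ and $z:=w(\cdot;a)/\varrho$, both positive on $(0,\infty)$ since $w'>0$ there by Lemma~\ref{lem43}. From \eqref{interm7} we get $\Phi'=-z^{1/(p-1)}$, while $\Phi(r)\to 0$ as $r\to\infty$ by \eqref{f5}. Moreover $\Phi/z=w'/w\to 1$ by Step~1, hence $z\sim\Phi$ and therefore $-\Phi'=z^{1/(p-1)}\sim\Phi^{1/(p-1)}$ as $r\to\infty$. Since $-(2-p)/(p-1)<0$, a direct computation gives
$$
\frac{d}{dr}\,\Phi^{-(2-p)/(p-1)} \;=\; \frac{2-p}{p-1}\,\Phi^{-1/(p-1)}\,(-\Phi')\ ,
$$
so this derivative converges to $(2-p)/(p-1)$ as $r\to\infty$. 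To conclude, given $\e\in(0,1)$ I would pick $r_\e$ so that $(1-\e)\Phi^{1/(p-1)}\le-\Phi'\le(1+\e)\Phi^{1/(p-1)}$ on $[r_\e,\infty)$, integrate the resulting two-sided bound for the displayed derivative over $[r_\e,r]$, divide by $r$, and then let $r\to\infty$ and $\e\to 0$. Because $\Phi(r)^{-(2-p)/(p-1)}\to\infty$, the constant $\Phi(r_\e)^{-(2-p)/(p-1)}$ is asymptotically negligible, and we obtain $\Phi(r)^{-(2-p)/(p-1)}\sim\tfrac{2-p}{p-1}\,r$, that is, $\Phi(r)\sim(\tfrac{2-p}{p-1}\,r)^{-(p-1)/(2-p)}$. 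Since $z\sim\Phi$, this yields $w(r;a)=\varrho(r)z(r)\sim\varrho(r)\,(\tfrac{2-p}{p-1}\,r)^{-(p-1)/(2-p)}$, which is exactly \eqref{f8}.

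\textbf{Expected difficulty.} I do not anticipate a genuine obstacle here: Step~1 is pure bookkeeping on top of Lemmas~\ref{lem46} and~\ref{lem47}, and Step~2 is a textbook asymptotic analysis of the scalar relation $-\Phi'\sim\Phi^{1/(p-1)}$. The only points needing care are keeping the sign of the exponent $-(2-p)/(p-1)$ straight and using $\Phi\to 0$ to discard the integration constant; an alternative would be to apply a Karamata-type statement to the equivalent integral relation $\int_r^\infty z^{1/(p-1)}\,ds\sim z(r)$, but the elementary comparison above is entirely self-contained.
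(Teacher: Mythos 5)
Your proposal is correct and follows essentially the same route as the paper: Step~1 is exactly the paper's one-line deduction from Lemma~\ref{lem46} and Lemma~\ref{lem47}, and Step~2 reproduces the paper's computation of $\frac{d}{dr}\left[(w'/\varrho)^{-(2-p)/(p-1)}\right]$ via \eqref{interm7} together with $w\sim w'$, merely spelling out the final integration of the two-sided bound that the paper leaves as ``easily implies \eqref{f8}''.
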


\begin{proof}
The first assertion follows readily from Lemma~\ref{lem46} and Lemma~\ref{lem47}. Since $w(r)\sim w'(r)$ as $r\to\infty$ for $a\in C$, we
derive from \eqref{interm7} that
\begin{equation*}\begin{split}
\frac{d}{dr}\left[\left(\frac{w'(r)}{\varrho(r)}\right)^{-(2-p)/(p-1)}\right]&=-\frac{2-p}{p-1}\left(\frac{w'(r)}{\varrho(r)}\right)^{-1/(p-1)}\frac{d}{dr}\left(\frac{w'(r)}{\varrho(r)}\right)\\
&=\frac{2-p}{p-1}\left(\frac{w(r)}{w'(r)}\right)^{-1/(p-1)},
\end{split}\end{equation*}
and thus, as $r\to\infty$,
$$
\left(\frac{w'(r)}{\varrho(r)}\right)^{-(2-p)/(p-1)}\sim\frac{2-p}{p-1}r,
$$
which easily implies \eqref{f8}.
\end{proof}

\begin{corollary}\label{cor.2}
Let $a\in B\cup C$. Recalling that $g(\cdot;a) = - (|f'|^{p-2} f')(\cdot;a)$, there holds:
\begin{eqnarray*}
&&\lim\limits_{r\to\infty}\frac{g(r;a)}{f(r;a)}=\infty \quad {\rm if} \
a\in B,\\
&&\lim\limits_{r\to\infty}\frac{g(r;a)}{f(r;a)}=1 \quad {\rm if} \ a\in
C.
\end{eqnarray*}
\end{corollary}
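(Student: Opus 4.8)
The plan is to reduce the statement to the already established asymptotics of $w'(\cdot;a)/w(\cdot;a)$ by spotting a simple algebraic identity relating the quotient $g/f$ to the quotient $w/w'$.

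First I would record the preliminary facts. Since $a\in B\cup C$ we have $R_1(a)=\infty$, hence $R(a)=\infty$ by Lemma~\ref{lem44}; consequently $f(\cdot;a)>0$ and $f'(\cdot;a)<0$ on $(0,\infty)$ by Lemma~\ref{lem41}~(a), so that $g(\cdot;a)=|f'(\cdot;a)|^{p-1}>0$ on $(0,\infty)$ and the quotient $g(\cdot;a)/f(\cdot;a)$ is well defined and positive there. Moreover $w'(\cdot;a)>0$ on $(0,\infty)$ by Lemma~\ref{lem43}, so that $w(\cdot;a)/w'(\cdot;a)$ also makes sense.

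The key observation is the following identity. Recalling that $w=\varrho g=-\varrho\,|f'|^{p-2}f'$ by \eqref{f3}, equation \eqref{f2} reads exactly
$$
w'(r;a)=\varrho(r)\,f(r;a)\ ,\qquad r\in(0,\infty)\ ,
$$
while $w(r;a)=\varrho(r)\,g(r;a)$ by definition. Dividing these two relations and cancelling the (positive) factor $\varrho(r)$ yields
$$
\frac{g(r;a)}{f(r;a)}=\frac{w(r;a)}{w'(r;a)}\ ,\qquad r\in(0,\infty)\ .
$$

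It then only remains to invoke the previous results. If $a\in B$, Lemma~\ref{lem47} gives $w'(r;a)/w(r;a)\to 0$ as $r\to\infty$, and since $w$ and $w'$ are positive this forces $g(r;a)/f(r;a)=w(r;a)/w'(r;a)\to\infty$ as $r\to\infty$. If $a\in C$, Corollary~\ref{cor.1} gives $w'(r;a)/w(r;a)\to 1$ as $r\to\infty$, whence $g(r;a)/f(r;a)=w(r;a)/w'(r;a)\to 1$ as $r\to\infty$. This establishes the corollary. There is no genuine obstacle here: the entire content is the identity $g/f=w/w'$, after which the claim is an immediate consequence of Lemma~\ref{lem47} and Corollary~\ref{cor.1}.
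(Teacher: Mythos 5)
Your proof is correct and follows essentially the same route as the paper: both rest on the identity $g/f = w/w'$ (obtained from $w=\varrho g$ and $w'=\varrho f$, the latter being a restatement of \eqref{f2}), after which the conclusion is immediate from Lemma~\ref{lem47} and Corollary~\ref{cor.1}. Your sign check deriving $w'=\varrho f$ from \eqref{f2} is accurate, so there is nothing to add.
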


\begin{proof}
Observe that
$$
g=\frac{w}{\varrho}, \quad
g'=\frac{w'}{\varrho} - \frac{\varrho'}{\varrho}\, \frac{w}{\varrho},
\quad f=\frac{w'}{\varrho}.
$$
Consequently, for $a\in B\cup C$, there holds $g/f=w/w'$ on $(0,\infty)$, and the conclusion
readily follows from Lemma~\ref{lem47} and Corollary~\ref{cor.1}.
\end{proof}

\subsection{A Pohozaev functional}\label{s4.3}

The next step towards the identification of the sets $B$ and $C$ is
the construction of a \emph{Pohozaev functional}. Up to our
knowledge, the idea of considering such functionals to study the uniqueness of
solutions to some elliptic equation stems from Yanagida \cite{Ya91a,
Ya91b}, but the approach we use here is rather inspired by
\cite{ShWa13, ShWa16}.

Let $a>0$. Recalling that $g(\cdot;a) = - (|f'|^{p-2} f')(\cdot;a)$ we define
\begin{equation}\label{J1}
\begin{split}
J(r)=J(r;a) & :=\frac{1}{2}\alpha(r)g'(r;a)^2+\beta(r)g(r;a)g'(r;a) \\
& \quad +\frac{1}{2}\gamma(r)g(r;a)^2+\frac{p-1}{p}\delta(r)g(r;a)^{p/(p-1)}
\end{split}
\end{equation}
for $r\in (0,R_1(a))$, where $\alpha$,
$\beta$, $\gamma$, and $\delta$ are functions to be
determined later. We proceed as in \cite{ShWa16} to look for a
functional $J$ solving a differential equation of the form
$J'=Gg^2$ for some function $G$. Calculating $J'$ and using \eqref{ODEg} in order to replace $g''$ in the calculations, we find, for $r\in (0,R_1(a))$,
\begin{equation*}
\begin{split}
J'(r)&=g'(r)^2 \left[\frac{1}{2}\alpha'(r)-\left(1+\frac{N-1}{r}\right)\alpha(r)+\beta(r)\right]\\
&+g(r)g'(r)\left[\frac{N-1}{r^2}\alpha(r)+\beta'(r)-\left(1+\frac{N-1}{r}\right)\beta(r)+\gamma(r)\right]\\
&+g(r)^{1/(p-1)}g'(r)\left[-\alpha(r)+\delta(r)\right]+g(r)^{p/(p-1)}\left[-\beta(r)+\frac{p-1}{p}\delta'(r)\right]\\
&+g(r)^2\left[\frac{N-1}{r^2}\beta(r)+\frac{1}{2}\gamma'(r)\right].
\end{split}
\end{equation*}
The idea is then to choose the functions $\alpha$, $\beta$, $\gamma$, and $\delta$ in order to vanish all the coefficients in the expression above for $J$, except for the one in front of $g^2$.
More specifically we require
\begin{eqnarray}
&&\beta(r)=-\frac{1}{2}\alpha'(r)+\left(1+\frac{N-1}{r}\right)\alpha(r),\label{J2}\\
&&\gamma(r)=-\beta'(r)+\left(1+\frac{N-1}{r}\right)\beta(r)-\frac{N-1}{r^2}\alpha(r),\label{J3}\\
&&\delta(r)=\alpha(r),\label{J4}\\
&&\frac{p-1}{p}\delta'(r)=\beta(r),\label{J5}
\end{eqnarray}
in order to have
\begin{equation}\label{J6}
J'(r)=G(r)g(r)^2, \quad
G(r):=\frac{N-1}{r^2}\beta(r)+\frac{1}{2}\gamma'(r)\ , \qquad r\in (0,R_1(a))\ .
\end{equation}
Combining \eqref{J2}, \eqref{J4}, and \eqref{J5} and replacing all
the unknown functions in terms of $\delta$, we obtain
$$
\frac{\delta'(r)}{\delta(r)}=\frac{2p}{3p-2}\frac{\varrho'(r)}{\varrho(r)}, \qquad r>0\ ,
$$
so that $\delta=\varrho^{2p/(3p-2)}$. Thanks to \eqref{J2} and \eqref{J4}, we find
\begin{equation}\label{J7}
\alpha(r)=\delta(r)=\varrho(r)^{2p/(3p-2)}, \quad
\beta(r)=\frac{2(p-1)}{3p-2}\left(1+\frac{N-1}{r}\right)\varrho(r)^{2p/(3p-2)}.
\end{equation}
We next use \eqref{J3} and we obtain after rather tedious, but
straightforward calculations
\begin{equation}\label{J8}
\begin{split}
\gamma(r)&= - \left[\frac{2(p-1)(2-p)}{(3p-2)^2}+\frac{4(N-1)(p-1)(2-p)}{(3p-2)^2}\frac{1}{r}\right.\\
&\left.+\frac{N-1}{(3p-2)^2}[p(3p-2)+2(N-1)(p-1)(2-p)]\frac{1}{r^2}\right]\varrho(r)^{2p/(3p-2)}.
\end{split}
\end{equation}
Replacing the formulas giving the functions $\beta$ and $\gamma$ from \eqref{J7}
and \eqref{J8} into \eqref{J6} and again after rather long
calculations, one finds that $G/\alpha$ is a cubic polynomial in $1/r$. More precisely,
\begin{equation}\label{J9}
\frac{(3p-2)^3}{p}\frac{G(r)}{\alpha(r)}= P\left( \frac{1}{r} \right)\ , \qquad r>0\ ,
\end{equation}
where $P$ is a the cubic polynomial
$$
P(z) :=  M_3(p) (N-1) z^3 +M_2(p) (N-1) z^2 + M_1(p) (N-1) z+ M_0(p),
$$
its coefficients being given by
\begin{eqnarray*}
&&M_3(p):=(3p-2)^2+(N-1)(3p-2)(3p-4)-2(p-1)(2-p)(N-1)^2,\\
&&M_2(p):=(3p-2)(3p-4)-6(N-1)(p-1)(2-p),\\
&&M_1(p):=-6(p-1)(2-p), \quad M_0(p):=-2(p-1)(2-p).
\end{eqnarray*}

We emphasize here that \textsl{the function $G$ does not depend on the parameter $a$.}

The following result will be essential for the proof of Theorem~\ref{th.uniq}.

\begin{lemma}\label{lemaG}
If $p\in(p_c,2)$, there exists $r_G>0$ such that $G(r)>0$ if
$r\in[0,r_G)$ and $G(r)<0$ if $r>r_G$.
\end{lemma}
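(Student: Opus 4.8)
The plan is to read off the sign of $G$ from the explicit formula \eqref{J9}, which relates $G$ to the cubic polynomial $P$. Since $\alpha(r)=\varrho(r)^{2p/(3p-2)}>0$ for $r>0$ and $(3p-2)^3/p>0$ (as $p>1$ gives $3p-2>1$), formula \eqref{J9} reads $G(r)=\frac{p}{(3p-2)^3}\,\alpha(r)\,P(1/r)$, so the sign of $G(r)$ equals that of $P(1/r)$ for every $r>0$. It therefore suffices to show that $P$ has a unique positive zero $z_G$ with $P<0$ on $(0,z_G)$ and $P>0$ on $(z_G,\infty)$: the lemma then holds with $r_G:=1/z_G$, and moreover $G(r)>0$ for all small $r>0$ since $1/r\to+\infty$ and $P(1/r)\to+\infty$ as $r\to0^+$ once the leading coefficient of $P$ is known to be positive.

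To study $P$ I would first note the signs of its coefficients. One has $M_0(p)=-2(p-1)(2-p)<0$ and $M_1(p)=-6(p-1)(2-p)<0$ for $p\in(1,2)$, so $P(0)=M_0(p)<0$, and, since $N\ge2$, the linear coefficient $(N-1)M_1(p)$ of $P$ is negative. The key point, which is the real content of the lemma and the step I expect to be the main obstacle, is that the leading coefficient $(N-1)M_3(p)$ is positive, i.e.\ $M_3(p)>0$. Granting this, $P'(z)=3(N-1)M_3(p)z^2+2(N-1)M_2(p)z+(N-1)M_1(p)$ is a quadratic with positive leading coefficient and negative value at $z=0$, so its two real roots have opposite signs; hence $P'$ vanishes exactly once on $(0,\infty)$, say at $z_1$, with $P$ strictly decreasing on $(0,z_1]$ and strictly increasing on $[z_1,\infty)$. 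Since $P(0)<0$ and $P(z)\to+\infty$ as $z\to\infty$, $P$ has a unique positive zero $z_G\in(z_1,\infty)$, with $P<0$ on $(0,z_G)$ and $P>0$ on $(z_G,\infty)$, as required.

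Finally, to prove $M_3(p)>0$ for $p\in(p_c,2)$ and $N\ge2$ — the one place where the hypothesis $p>p_c$ is genuinely used — I would regard $M_3$ as a function of $m:=N-1\ge1$, namely
\[
M_3=-2(p-1)(2-p)\,m^2+(3p-2)(3p-4)\,m+(3p-2)^2,
\]
which is a concave quadratic in $m$ because its leading coefficient $-2(p-1)(2-p)$ is negative. At $m=0$ its value is $(3p-2)^2>0$, and a direct (if slightly tedious) computation shows that at $m=m_*:=2(p-1)/(2-p)$ its value equals $p^3/(2-p)>0$; by concavity, $M_3>0$ for all $m\in[0,m_*]$. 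Now $p>p_c$ is equivalent to $p(N+1)>2N$, i.e.\ to $N(2-p)<p$, i.e.\ to $(N-1)(2-p)<2(p-1)$, i.e.\ precisely to $N-1<m_*$. Hence $m=N-1\in[1,m_*)$, so $M_3(p)>0$, which completes the argument. The delicate point here is the algebraic identity giving the value $p^3/(2-p)$ of $M_3$ at $m=m_*$; everything else is routine sign bookkeeping for $P$.
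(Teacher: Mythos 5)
Your proof is correct, and its overall strategy coincides with the paper's: reduce via \eqref{J9} to the sign of the cubic $P$, note $M_0(p)<0$ and $M_1(p)<0$, and isolate $M_3(p)>0$ as the crux. The two sub-arguments are, however, executed differently. For $M_3(p)>0$ the paper fixes $N$ and works in the variable $p$: it computes $M_3(p_c)=4N^3/(N+1)^2>0$ and checks $M_3'(p)>0$ on $(p_c,2)$, so that $M_3$ is increasing there. You instead fix $p$ and view $M_3$ as a concave quadratic in $m=N-1$, positive at $m=0$ and at $m_*=2(p-1)/(2-p)$, and translate $p>p_c$ into $N-1<m_*$. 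I verified your identity $M_3\big|_{m=m_*}=p^3/(2-p)$; it is correct, and reassuringly it is the same evaluation as the paper's, since $m=m_*$ is exactly the curve $p=p_c(N)$ (indeed $p^3/(2-p)$ at $p=2N/(N+1)$ equals $4N^3/(N+1)^2$). Both routes use the hypothesis $p>p_c$ in equivalent ways and are of comparable length. For the uniqueness of the positive root of $P$, the paper argues via Vieta's relations ($z_1z_2z_3>0$ and $z_1z_2+z_1z_3+z_2z_3<0$ force exactly one positive root, whether the other two roots are real or complex conjugates) combined with $P(0)<0$ and $P(z)\to+\infty$, whereas you run a monotonicity analysis through $P'$, whose roots have opposite signs because $P'(0)=(N-1)M_1<0$; both work, and neither needs the sign of $M_2$. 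There are no gaps.
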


\begin{proof}
From the definitions above, we easily notice that $M_0(p)<0$ and
$M_1(p)<0$, since $p\in (1,2)$. We will next prove that $M_3(p)>0$
for $p\in(p_c,2)$. To this end, we first note that
\begin{equation}\label{J10}
M_3(p_c)= 4 \frac{N^3}{(N+1)^2}>0.
\end{equation}
For $p>p_c$, we compute $M_3'(p)$ and find
\begin{equation*}
\begin{split}
M_3'(p)&=(4N^2+10N+4)p-6N(N+1) \\
& > (4N^2+10N+4)\frac{2N}{N+1}-6N(N+1)\\
& = \frac{2N}{N+1}\left(N^2+4N+1\right)>0.
\end{split}
\end{equation*}
Thus, $M_3$ is increasing on $(p_c,\infty)$ and, taking into account \eqref{J10},
we conclude that $M_3(p)>0$ for $p>p_c$. Therefore, denoting the (possibly complex) roots of the cubic polynomial $P$ by $z_1$, $z_2$, and $z_3$ with $z_1\in\real$ and using both the relations between roots and coefficients
$$
z_1 z_2 z_3=-\frac{M_0(p)}{(N-1)M_3(p)}>0, \quad
z_1 z_2 + z_1 z_3 + z_2 z_3 = \frac{M_1(p)}{M_3(p)}<0\ ,
$$
and the properties
$$
P(0)= M_0(p)<0 \ , \qquad \lim_{z\to\infty} P(z) = \infty\ ,
$$
we realize that the polynomial $P$ has exactly one positive root, which we
denote by $1/r_G$. It is then easy to check that $G(r)>0$ if $r\in(0,r_G)$ and $G(r)<0$ if $r>r_G$, completing the proof.
\end{proof}

Combining \eqref{J6} and Lemma~\ref{lemaG} provides interesting properties of the function $J(\cdot;a)$ defined in \eqref{J1} which we summarize in the next proposition.

\begin{proposition}\label{propJ}
Let $a>0$. Then $J(0;a)=0$ and $J(\cdot;a)$ is increasing on $\left( 0,\min\{r_G,R_1(a)\} \right)$. If $R_1(a)>r_G$ then $J(\cdot;a)$ is decreasing on $\left( r_G , R_1(a) \right)$.
\end{proposition}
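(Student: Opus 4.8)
The plan is to read off everything from the identity $J'(r;a)=G(r)g(r;a)^2$ established in \eqref{J6} together with the sign information on $G$ from Lemma~\ref{lemaG}. First I would verify the initial value $J(0;a)=0$. Recall from \eqref{ICg} that $g(0;a)=0$ and $g'(0;a)=a/N$ is finite, and from \eqref{J7}--\eqref{J8} that the coefficient functions behave like powers of $\varrho(r)=r^{N-1}e^r$ near $r=0$; since $N\ge 2$ we have $\varrho(0)=0$, so $\alpha(r)=\delta(r)=\varrho(r)^{2p/(3p-2)}\to 0$, while $\beta(r)\sim \tfrac{2(p-1)}{3p-2}\tfrac{N-1}{r}\varrho(r)^{2p/(3p-2)}$ and $\gamma(r)$ blows up no worse than $r^{-2}\varrho(r)^{2p/(3p-2)}$ as $r\to 0$. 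Plugging $g(r;a)\sim (a/N) r$ and $g'(r;a)\to a/N$ into \eqref{J1}, each of the four terms carries a factor $\varrho(r)^{2p/(3p-2)}$ times at most $r^{-2}\cdot r^2=O(1)$, and since $2p/(3p-2)>0$ and $\varrho(r)^{2p/(3p-2)}\to 0$, every term tends to $0$; hence $J(0;a)=0$ (interpreting $J(0;a)$ as the limit $\lim_{r\to 0^+}J(r;a)$, which exists).

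Next I would establish the continuity and differentiability of $J(\cdot;a)$ on $(0,R_1(a))$: this is immediate because $g(\cdot;a)\in C^1$ there (indeed $g$ solves the second-order ODE \eqref{ODEg} on $(0,R_1(a))$, so $g''$ exists and is continuous), and the coefficient functions $\alpha,\beta,\gamma,\delta$ are smooth on $(0,\infty)$ by their explicit formulas \eqref{J7}--\eqref{J8}. Then \eqref{J6} holds pointwise on $(0,R_1(a))$. The monotonicity statements now follow by integrating: for $0<r_1<r_2<\min\{r_G,R_1(a)\}$ we have $J(r_2;a)-J(r_1;a)=\int_{r_1}^{r_2}G(r)g(r;a)^2\,dr>0$ because $G(r)>0$ on $(0,r_G)$ by Lemma~\ref{lemaG} and $g(r;a)^2>0$ on $(0,R_1(a))$ (recall $g(\cdot;a)=|f'(\cdot;a)|^{p-1}>0$ there, as noted just before \eqref{r1a}). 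Letting $r_1\to 0^+$ and using $J(0;a)=0$ gives $J(r;a)>0$ and increasing on $\left(0,\min\{r_G,R_1(a)\}\right)$. Symmetrically, if $R_1(a)>r_G$ then for $r_G\le r_1<r_2<R_1(a)$ we have $G(r)<0$ on $(r_G,R_1(a))$, so $J(r_2;a)-J(r_1;a)=\int_{r_1}^{r_2}G(r)g(r;a)^2\,dr<0$, proving $J(\cdot;a)$ is decreasing on $\left(r_G,R_1(a)\right)$.

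The one point requiring a little care — and the main (though modest) obstacle — is the behavior at the endpoint $r=0$, since the coefficient $\gamma$ and part of $\beta$ are singular there; one must check that the $g^2$, $gg'$ terms kill these singularities, which is exactly the computation sketched above using $g(r;a)=O(r)$. (If one prefers, one can avoid discussing $J(0;a)$ altogether and simply state that $J(\cdot;a)$ is increasing on $\left(0,\min\{r_G,R_1(a)\}\right)$ and has limit $0$ at $r=0$, which is all that is used later.) Everything else is a direct consequence of the sign of $G$ from Lemma~\ref{lemaG} and the positivity of $g$ on $(0,R_1(a))$, with no further estimates needed; in particular the argument is uniform in $a$ because, as emphasized after \eqref{J9}, the function $G$ does not depend on $a$.
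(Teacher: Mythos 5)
Your argument is correct and is essentially the paper's own proof: you obtain $J(0;a)=0$ from the same asymptotics $g(r;a)\sim ar/N$, $g'(r;a)\to a/N$ combined with $\alpha(r)\to 0$, $r\beta(r)=O(\alpha(r))$ and $r^2\gamma(r)=O(\alpha(r))$ as $r\to 0$, and the monotonicity follows, exactly as in the paper, from the identity $J'=Gg^2$ in \eqref{J6} together with the sign of $G$ from Lemma~\ref{lemaG} and the positivity of $g(\cdot;a)$ on $\left(0,R_1(a)\right)$. The extra remarks on differentiability of $J$ and on the cancellation of the singular coefficients at $r=0$ are just slightly more explicit versions of what the paper does implicitly.
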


\begin{proof}
According to \eqref{ICg} and \eqref{J1}, we have as $r\to 0$,
\begin{equation}\label{interm23}
J(r)\sim\left[\frac{1}{2}\alpha(r)+r\beta(r)+\frac{1}{2}r^2\gamma(r)\right]\left(\frac{a}{N}\right)^2+\frac{p-1}{p}\alpha(r)r^{p/(p-1)}\left(\frac{a}{N}\right)^{p/(p-1)}.
\end{equation}
Recalling the formulas for $\alpha$, $\beta$, and $\gamma$ in \eqref{J7} and \eqref{J8} we have
$$
r\beta(r)\sim\frac{2(p-1)}{3p-2}(N-1+r)\alpha(r)\to0 \quad {\rm as}
\ r\to0,
$$
and
$$
r^2\gamma(r)\sim-\frac{N-1}{(3p-2)^2}\left[p(3p-2)+2(N-1)(p-1)(2-p)\right]\alpha(r)\to0
\quad {\rm as} \ r\to 0.
$$
Combining these properties with \eqref{interm23} gives $J(0)=0$. The
monotonicity properties of $J$ readily follow from \eqref{J6} and
Lemma~\ref{lemaG}.
\end{proof}

The usefulness of the Pohozaev functional $J$ becomes clear from the
next two lemmas. Indeed, with its help we provide a sharp difference
between the sets $B$ and $C$, which is one of the last technical
steps towards uniqueness of the profile in $B$.

\begin{lemma}\label{lem48}
Let $a\in B$. Then $$J(r;a)>0 \ {\rm for} \ r\in(0,\infty), \quad
\lim\limits_{r\to\infty}J(r;a)=0.$$
\end{lemma}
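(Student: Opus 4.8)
The plan is to derive both claims from the structural identity $J'(r;a)=G(r)g(r;a)^2$ established in \eqref{J6} together with Lemma~\ref{lemaG} and the asymptotic information on $a\in B$ already collected in Lemma~\ref{lem45} and Lemma~\ref{lem47}.

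First I would address the positivity. By Proposition~\ref{propJ}, $J(0;a)=0$ and $J(\cdot;a)$ is increasing on $(0,r_G)$ and decreasing on $(r_G,\infty)$ (recall $R_1(a)=\infty$ for $a\in B\cup C$). Hence $J(r;a)>J(0;a)=0$ for $r\in (0,r_G]$ and $J$ is positive and strictly decreasing on $(r_G,\infty)$; so $J(r;a)>0$ for all $r\in (0,\infty)$ will follow as soon as we know that $J(r;a)\to 0$ as $r\to\infty$, which is precisely the second claim. Thus everything reduces to computing the limit of $J(r;a)$ at infinity.

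For the limit, I would insert into the defining formula \eqref{J1} the sharp asymptotics available for $a\in B$: from Lemma~\ref{lem47}, $g=w/\varrho$, $g'=w'/\varrho-(\varrho'/\varrho)(w/\varrho)$, and $w'(r)\sim (p-1)l(a)^{1/(p-1)}\varrho(r)^{-(2-p)/(p-1)}$, together with $w(r)\to l(a)$, $w/\varrho\to 0$, $w'/\varrho\to 0$ (Lemma~\ref{lem45}) and $\varrho'/\varrho\sim 1$ (see \eqref{asymptrho}). Consequently $g(r;a)\sim l(a)/\varrho(r)$ and $g'(r;a)\sim -l(a)/\varrho(r)$, both of order $\varrho(r)^{-1}$, while $\alpha(r)=\delta(r)=\varrho(r)^{2p/(3p-2)}$ and $\beta,\gamma$ are of the same order as $\alpha$ by \eqref{J7}--\eqref{J8}. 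The quadratic terms $\tfrac12\alpha g'^2$, $\beta g g'$, $\tfrac12\gamma g^2$ are therefore $O\big(\varrho(r)^{2p/(3p-2)-2}\big)$, and since $2p/(3p-2)<2$ for $p<2$, the exponent $2p/(3p-2)-2<0$ and these contributions vanish as $r\to\infty$ because $\varrho(r)\to\infty$. For the last term, $\tfrac{p-1}{p}\delta(r)g(r;a)^{p/(p-1)}$ is of order $\varrho(r)^{2p/(3p-2)}\varrho(r)^{-p/(p-1)}=\varrho(r)^{2p/(3p-2)-p/(p-1)}$, and one checks that $\frac{2p}{3p-2}-\frac{p}{p-1}=\frac{-p(p-2)}{(3p-2)(p-1)}<0$... wait, that exponent is positive, so I would instead bound this term directly using $g=w/\varrho$: it equals $\tfrac{p-1}{p}\varrho^{2p/(3p-2)}(w/\varrho)^{p/(p-1)}=\tfrac{p-1}{p}w^{p/(p-1)}\varrho^{2p/(3p-2)-p/(p-1)}$, and since $\frac{2p}{3p-2}-\frac{p}{p-1}<0$ when... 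I would simply verify the sign of $2p(p-1)-p(3p-2)=-p^2<0$, so the exponent $\frac{2p}{3p-2}-\frac{p}{p-1}=\frac{2p(p-1)-p(3p-2)}{(3p-2)(p-1)}=\frac{-p^2}{(3p-2)(p-1)}<0$, and since $w\to l(a)$ is bounded, this term also tends to $0$. Therefore $J(r;a)\to 0$, which proves the second claim and, combined with the monotonicity from Proposition~\ref{propJ}, the positivity on $(0,\infty)$.

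The main obstacle I anticipate is purely bookkeeping: assembling the various asymptotic equivalences for $g$, $g'$, and the weight functions $\alpha,\beta,\gamma,\delta$ consistently, and correctly comparing the resulting powers of $\varrho(r)$ (equivalently of $r$ and $e^r$) to confirm that \emph{every} term in \eqref{J1} is negligible as $r\to\infty$ — in particular keeping careful track of the exponent $2p/(3p-2)$ against $2$ and against $p/(p-1)$. No genuinely new idea is needed beyond the identity $J'=Gg^2$, Lemma~\ref{lemaG}, and the decay rates from Section~\ref{s4.2}; the only subtlety is ensuring the $r$-dependent lower-order pieces of $\beta$ and $\gamma$ (the $1/r$ and $1/r^2$ terms) do not spoil the estimate, which they do not since they are dominated by the leading $\varrho(r)^{2p/(3p-2)}$ factor.
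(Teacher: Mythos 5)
Your proposal is correct and follows essentially the same route as the paper: both reduce the positivity to the vanishing of $J(\cdot;a)$ at infinity via the monotonicity in Proposition~\ref{propJ}, and both obtain $\lim_{r\to\infty}J(r;a)=0$ from the asymptotics $g(r;a)\sim l(a)/\varrho(r)$, $g'(r;a)/g(r;a)\to -1$ and the growth rates of $\alpha,\beta,\gamma,\delta$. The only (cosmetic) difference is that the paper factors $J=\alpha g^2[\cdots]$ and passes to the limit in the bracket, whereas you estimate each of the four terms of \eqref{J1} separately; your exponent checks ($2p/(3p-2)-2<0$ and $2p/(3p-2)-p/(p-1)=-p^2/((3p-2)(p-1))<0$) are accurate.
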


\begin{proof}
Let $a\in B$. We first notice from \eqref{J1}, \eqref{J7}, and
\eqref{J8} that
\begin{equation}\label{J11}
J(r)=\alpha(r)g(r)^2
\left[\frac{1}{2}\left(\frac{g'(r)}{g(r)}\right)^2+\frac{\beta(r)}{\alpha(r)}\frac{g'(r)}{g(r)}+\frac{\gamma(r)}{2\alpha(r)}+\frac{p-1}{p}g(r)^{(2-p)/(p-1)}\right],
\end{equation}
with
\begin{equation}\label{J12}
\lim\limits_{r\to\infty}\frac{\beta(r)}{\alpha(r)}=\frac{2(p-1)}{3p-2},
\qquad
\lim\limits_{r\to\infty}\frac{\gamma(r)}{2\alpha(r)}=-\frac{(2-p)(p-1)}{(3p-2)^2}.
\end{equation}
Since $a\in B$, it follows from \eqref{limitw} and Lemma~\ref{lem47} that
$$
g(r)\sim \frac{l(a)}{\varrho(r)} \;\;\text{ as }\;\; r\to\infty\ , \qquad \lim\limits_{r\to\infty} \frac{g'(r)}{g(r)} = - 1 + \lim\limits_{r\to\infty} \frac{w'(r)}{w(r)} = -1\ .
$$
Therefore, we infer from \eqref{J7}, \eqref{J11}, and \eqref{J12} that, as $r\to\infty$,
\begin{equation*}
\begin{split}
J(r)&\sim\varrho(r)^{2p/(3p-2)-2}l(a)^2\left[\frac{1}{2}-\frac{2(p-1)}{3p-2}-\frac{(p-1)(2-p)}{(3p-2)^2}\right]\\
&=l(a)^2\varrho(r)^{-4(p-1)/(3p-2)}\frac{p(2-p)}{2(3p-2)^2},
\end{split}
\end{equation*}
hence $\lim\limits_{r\to\infty}J(r)=0$. Since $J$ is increasing on
$(0,r_G)$ and decreasing on $(r_G,\infty)$ with $J(0)=0$ by
Proposition~\ref{propJ}, we conclude that $J(r)>0$ for
$r\in(0,\infty)$.
\end{proof}

\begin{lemma}\label{lem49}
Let $a>0$. Then $a\in C$ if and only if there exists $\bar{r}\in (0,R_1(a))$ such that $J(\bar{r};a)<0$. 

Moreover, if $a\in C$, then
$$
\lim\limits_{r\to\infty} J(r;a)=-\infty.
$$
\end{lemma}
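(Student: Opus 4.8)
The plan is to prove the two implications separately, using the asymptotic dichotomy from Lemma~\ref{lem46} together with the monotonicity structure of $J$ established in Proposition~\ref{propJ}. For the first implication, suppose $a\in C$. By Corollary~\ref{cor.1} we have $w'(r)/w(r)\to 1$, hence $g'(r)/g(r)=w'(r)/\varrho(r)\cdot\varrho(r)/w(r)-\varrho'(r)/\varrho(r)\to 1-1=0$ as $r\to\infty$; more precisely, using $g=w/\varrho$ and \eqref{asymptrho}, one gets $g'(r)/g(r)=w'(r)/w(r)-\varrho'(r)/\varrho(r)\to 0$. The key point is that $g(r)^{(2-p)/(p-1)}$ now \emph{does not} vanish at infinity: from Corollary~\ref{cor.1}, $g(r)=w(r)/\varrho(r)\sim\left(\tfrac{2-p}{p-1}r\right)^{-(p-1)/(2-p)}$, so $g(r)^{(2-p)/(p-1)}\sim\tfrac{p-1}{2-p}\,r^{-1}$, which tends to zero. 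This is the subtle point: in the representation \eqref{J11}, the first three bracketed terms tend to the constant $\tfrac12-0+\left(-\tfrac{(2-p)(p-1)}{(3p-2)^2}\right)$ only if $g'/g\to 0$, but since the prefactor is $\alpha(r)g(r)^2\sim\varrho(r)^{2p/(3p-2)}\left(\tfrac{2-p}{p-1}r\right)^{-2(p-1)/(2-p)}$, and $\varrho(r)^{2p/(3p-2)}$ grows like $e^{2pr/(3p-2)}$, we need to track the \emph{sign} of the bracket. The bracket's limit is $\tfrac12-\tfrac{(2-p)(p-1)}{(3p-2)^2}$, which I must check is \emph{negative} — and indeed since $3p-2<2$ on $(p_c,2)$ would be false (we have $3p-2\in(3p_c-2,4)$, i.e. roughly $(2N+?)/(N+1),\dots$), a direct computation is needed: $\tfrac12-\tfrac{(2-p)(p-1)}{(3p-2)^2}<0 \iff (3p-2)^2<2(2-p)(p-1)$, which fails for $p$ close to $2$. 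So the naive leading term is actually positive, and the correct argument must instead combine the leading constant with the next-order correction from $g'/g$, or — more robustly — deduce $\lim J=-\infty$ from the differential equation $J'=Gg^2$ directly.

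Hence the cleaner route: since $a\in C$, by Lemma~\ref{lem43} and \eqref{limitw} we have $R_1(a)=\infty$ and $w(r)\to\infty$; moreover $g(r)\sim c\,r^{-(p-1)/(2-p)}$ with $c=\left(\tfrac{2-p}{p-1}\right)^{-(p-1)/(2-p)}$, so $g(r)^2\sim c^2 r^{-2(p-1)/(2-p)}$. For $r>r_G$ we have $G(r)<0$ by Lemma~\ref{lemaG}, and from \eqref{J9}, $G(r)/\alpha(r)=\tfrac{p}{(3p-2)^3}P(1/r)\to\tfrac{p\,M_0(p)}{(3p-2)^3}<0$, so $G(r)\sim \tfrac{p M_0(p)}{(3p-2)^3}\alpha(r)$ with $\alpha(r)=\varrho(r)^{2p/(3p-2)}\sim r^{2p(N-1)/(3p-2)}e^{2pr/(3p-2)}$. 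Therefore $G(r)g(r)^2\sim \tfrac{p M_0(p)}{(3p-2)^3}c^2\,r^{2p(N-1)/(3p-2)-2(p-1)/(2-p)}e^{2pr/(3p-2)}$, which is negative and diverges to $-\infty$ exponentially. Integrating $J'(r)=G(r)g(r)^2$ from any $r_1>r_G$ to $r$ then forces $J(r)\to-\infty$ as $r\to\infty$, and in particular $J(\bar r;a)<0$ for $\bar r$ large. This proves the ``only if'' direction of the equivalence and the final limit simultaneously.

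For the converse: suppose $a\notin C$, i.e. $a\in A\cup B$. If $a\in B$, then Lemma~\ref{lem48} gives $J(r;a)>0$ for all $r\in(0,\infty)=(0,R_1(a))$, so no such $\bar r$ exists. If $a\in A$, then $R_1(a)<\infty$; I claim $J(r;a)>0$ on $(0,R_1(a))$ as well. By Proposition~\ref{propJ}, $J(\cdot;a)$ is increasing on $(0,\min\{r_G,R_1(a)\})$ with $J(0;a)=0$, so $J>0$ there. If $R_1(a)\le r_G$ we are done. If $R_1(a)>r_G$, then on $(r_G,R_1(a))$ the function $J(\cdot;a)$ is decreasing, so I must rule out that it dips below zero before $R_1(a)$. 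Here I would use the behavior of $(f,g)$ near $R_1(a)$: by \eqref{r1a} and Lemma~\ref{lem44}, $R(a)<\infty$ and $R(a)\le R_1(a)$, and at $r=R_1(a)$ the function $g(\cdot;a)$ reaches zero (by definition of $R_1$ and since $g$ cannot blow up by Lemma~\ref{lem41}(c) and the alternative \eqref{altBU} applied to $(f,g)$). As $r\to R_1(a)^-$, $g(r)\to 0$, so the terms $\gamma g^2$, $\beta g g'$, and $\delta g^{p/(p-1)}$ all vanish, leaving $J(R_1(a);a)=\tfrac12\alpha(R_1(a))g'(R_1(a);a)^2\ge 0$. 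Since $J$ is strictly decreasing on $(r_G,R_1(a))$ and its limit at the right endpoint is $\ge 0$, it follows that $J(r;a)>0$ throughout $(r_G,R_1(a))$, hence on all of $(0,R_1(a))$. This contradicts the existence of $\bar r$ with $J(\bar r;a)<0$, completing the converse.

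The main obstacle I anticipate is the endpoint analysis at $R_1(a)$ for $a\in A$: one must confirm that $g(\cdot;a)$ extends continuously to $r=R_1(a)$ with value $0$ and that $g'(R_1(a);a)$ is finite, so that the formula \eqref{J1} for $J$ remains valid up to the boundary and yields $J(R_1(a);a)\ge 0$. This requires checking that the solution $(f,g)(\cdot;a)$ of the first-order system does not leave its regularity regime before $R_1(a)$ — which follows from Lemma~\ref{lem41}(c) (global existence, $\mathcal R(a)=\infty$) and the definition \eqref{r1a} — but the precise continuity of $g'$ at $R_1(a)$, and in particular that $g'(R_1(a);a)$ is well-defined (not $-\infty$), should be spelled out carefully using \eqref{ODEg} or the integral form $g'(r)=g'(r_0)+\int_{r_0}^r[\tfrac{N-1}{s^2}g-g^{1/(p-1)}-(1+\tfrac{N-1}{s})g']\,ds$. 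I expect this to be routine given the $C^1$-regularity of $(f,g)$ on $[0,\mathcal R(a))=[0,\infty)$, but it is the step where a genuine argument, rather than a formal limit, is needed.
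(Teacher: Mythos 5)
Your proof is correct, but the forward direction takes a different route from the paper's, and your stated reason for doing so rests on a miscalculation. In the representation \eqref{J11}, for $a\in C$ you correctly note that $g'/g\to 0$; but then the first bracketed term $\tfrac12(g'/g)^2$ tends to $0$, not to $\tfrac12$. The bracket therefore converges to $\gamma/(2\alpha)\to -\tfrac{(2-p)(p-1)}{(3p-2)^2}<0$ (the $\beta/\alpha\cdot g'/g$ term and the $\tfrac{p-1}{p}g^{(2-p)/(p-1)}$ term both vanish), while the prefactor $\alpha(r)g(r)^2$ diverges; this is exactly the paper's argument, and it does go through — there is no sign problem to rescue. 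Your replacement argument, integrating $J'=Gg^2$ and using Lemma~\ref{lemaG} together with $G(r)/\alpha(r)\to pM_0(p)/(3p-2)^3<0$ and the polynomial decay of $g^2$ from Corollary~\ref{cor.1}, is nevertheless valid and arguably more robust: it needs only the sign and growth of $Gg^2$ past $r_G$, not the precise cancellation of the lower-order terms in the bracket. For the converse, your contrapositive formulation ($a\in A\cup B$ implies $J\ge 0$ on $(0,R_1(a))$) is the same argument as the paper's (which assumes $J(\bar r)<0$ and rules out $R_1(a)<\infty$ via $J(R_1(a))=\tfrac12\alpha(R_1(a))g'(R_1(a))^2\ge 0$ against the monotonicity of Proposition~\ref{propJ}); your endpoint concern about the finiteness of $g'(R_1(a))$ is indeed settled by Lemma~\ref{lem41}~(c), since the first-order system for $(f,g)$ is globally $C^1$ and $g'=-f+|g|-\tfrac{N-1}{r}g$ extends continuously to $r=R_1(a)$.
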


\begin{proof}
Consider first $a\in C$. By Corollary~\ref{cor.1}, we deduce that
$$
g(r)\sim\left(\frac{2-p}{p-1}r\right)^{-(p-1)/(2-p)} \quad {\rm as}
\ r\to\infty,
$$
and
$$
\frac{g'(r)}{g(r)}=\frac{w'(r)}{w(r)}-1-\frac{N-1}{r}\to 0 \quad
{\rm as} \ r\to\infty.
$$
We then obtain from \eqref{J11} and \eqref{J12} that
$$
J(r)\sim-\frac{(p-1)(2-p)}{(3p-2)^2}\varrho(r)^{2p/(3p-2)}\left(\frac{2-p}{p-1}r\right)^{-2(p-1)/(2-p)}\quad
{\rm as} \ r\to\infty,
$$
from which we deduce that $J(r)\to-\infty$ as $r\to\infty$ and thus takes negative values.

Conversely, let $a>0$ be such that there is $\bar{r}\in (0,R_1(a))$ with $J(\bar{r};a)<0$. Assume for contradiction that $R_1(a)<\infty$. On the one hand it follows from Proposition~\ref{propJ} that $\bar{r}>r_G$ and $J(r;a) \le J(\bar{r};a)<0$ for $r\in (\bar{r},R_1(a))$. On the other hand $J(R_1(a);a) = \alpha(R_1(a))
g'(R_1(a);a)^2 \ge 0$ by \eqref{r1a} and \eqref{J1}, which contradicts the previous statement. Consequently, $R_1(a)=\infty$, so that $a\in B\cup C$ and Lemma~\ref{lem48} excludes that $a$ belongs to $B$.
\end{proof}

A by-product of Lemma~\ref{lem48} and Lemma~\ref{lem49} is an alternative characterization of the sets $B$ and $C$.

\begin{corollary}\label{corrmk}
Let $a\in B\cup C$. Then
\begin{eqnarray*}
&&a\in B \ {\rm if \ and \ only \ if } \
\lim\limits_{r\to\infty}J(r;a)=0,\\
&&a\in C \ {\rm if \ and \ only \ if } \
\lim\limits_{r\to\infty}J(r;a)=-\infty.
\end{eqnarray*}
\end{corollary}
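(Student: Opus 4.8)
The plan is to read off the corollary as a direct logical consequence of Lemma~\ref{lem48} and Lemma~\ref{lem49}, using only the elementary fact that $B$ and $C$ are disjoint: by their very definitions, $w(\cdot;a)$ cannot be simultaneously bounded and unbounded, so $B\cap C=\emptyset$ while $B\cup C$ is, by hypothesis, the full range of $a$ under consideration.

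First I would record the two forward implications, which require no new work. If $a\in B$, then Lemma~\ref{lem48} gives $\lim_{r\to\infty}J(r;a)=0$. If $a\in C$, then Lemma~\ref{lem49} gives $\lim_{r\to\infty}J(r;a)=-\infty$. Since $B$ and $C$ partition $B\cup C$, this already shows that for every $a\in B\cup C$ the limit $\lim_{r\to\infty}J(r;a)$ exists in $[-\infty,\infty)$, equalling $0$ precisely on $B$ and $-\infty$ precisely on $C$ as far as these sufficient conditions go.

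The converse implications then follow by contraposition together with the disjointness of $B$ and $C$. Assume $a\in B\cup C$ and $\lim_{r\to\infty}J(r;a)=0$: were $a$ in $C$, Lemma~\ref{lem49} would force the limit to equal $-\infty$, a contradiction, so $a\in B$. Symmetrically, if $\lim_{r\to\infty}J(r;a)=-\infty$ then $a\notin B$ by Lemma~\ref{lem48}, hence $a\in C$. Combining the two directions yields both stated equivalences.

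There is no real obstacle here; the corollary merely repackages the two preceding lemmas, and the only subtlety worth flagging is that the hypothesis $a\in B\cup C$ is genuinely needed. For $a\in A$ the maximal interval $(0,R_1(a))$ on which $J(\cdot;a)$ is defined is bounded, so the limit at infinity is not meaningful; moreover, by Proposition~\ref{propJ} and the argument in the proof of Lemma~\ref{lem49}, such an $a$ satisfies $J(r;a)\ge 0$ on $(0,R_1(a))$, so the analogue of this characterization would fail outside $B\cup C$.
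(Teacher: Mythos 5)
Your argument is correct and coincides with the paper's treatment: the paper states this corollary as an immediate by-product of Lemma~\ref{lem48} and Lemma~\ref{lem49}, with the converse directions following exactly as you say from the disjointness of $B$ and $C$ and the hypothesis $a\in B\cup C$. Nothing further is needed.
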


The previous characterization of the set $C$ via the properties of
$J(\cdot;a)$ allows us to show that the set $C$ is non-empty.

\begin{lemma}\label{lemaC}
The set $C$ is non-empty and open. Moreover, there exists $a_0>0$
such that $(0,a_0)\subseteq C$.
\end{lemma}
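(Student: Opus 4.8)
The plan is to handle the two parts of the statement separately. The openness of $C$ will follow from a soft continuous-dependence argument based on Lemma~\ref{lem49}, and the inclusion $(0,a_0)\subseteq C$ --- which in particular yields $C\neq\emptyset$ --- will be obtained by combining Lemma~\ref{lem49}, which only requires $J(\bar r;a)<0$ at a single point $\bar r\in(0,R_1(a))$, with a rescaling that exhibits the profile of $f(\cdot;a)$, for $a$ small, on its natural spatial scale $r\sim a^{-(2-p)/(p-1)}$. For the openness, let $a\in C$; by Lemma~\ref{lem49} there is $\bar r\in(0,R_1(a))$ with $J(\bar r;a)<0$, and since $g(\cdot;a)>0$ on $(0,\bar r]$ by \eqref{r1a}, continuous dependence of $(f,g)(\cdot;a)$ on the parameter $a$ gives $R_1(a')>\bar r$ and the convergence of $g(\bar r;a'),g'(\bar r;a')$ for $a'$ close to $a$; hence $J(\bar r;a')\to J(\bar r;a)<0$ by \eqref{J1}, so Lemma~\ref{lem49} gives $a'\in C$.

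For $(0,a_0)\subseteq C$, set $\theta:=(2-p)/(p-1)>0$ and $F(\rho;a):=a^{-1}f(a^{-\theta}\rho;a)$, so that \eqref{ODE} becomes
\begin{equation*}
a^{\theta}\Big[\big(|F'|^{p-2}F'\big)'(\rho)+\frac{N-1}{\rho}\big(|F'|^{p-2}F'\big)(\rho)\Big]+F(\rho)-|F'(\rho)|^{p-1}=0\ ,\qquad F(0;a)=1\ ,\quad F'(0;a)=0\ .
\end{equation*}
As $a\to0$ the coefficient $a^{\theta}$ of the degenerate principal part vanishes, and the reduced equation $F=|F'|^{p-1}$ with $F(0)=1$ has the explicit positive decreasing solution $F_0(\rho)=\big(1+\tfrac{2-p}{p-1}\rho\big)^{-(p-1)/(2-p)}$; note that $\phi_a(r):=aF_0(a^{\theta}r)=\big(a^{-\theta}+\tfrac{2-p}{p-1}r\big)^{-(p-1)/(2-p)}$ satisfies $-|\phi_a'|^{p-2}\phi_a'=\phi_a$ on $[0,\infty)$. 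The core technical step is to prove, by a singular perturbation analysis, that for each $\delta\in(0,1)$ one has $F(\cdot;a)\to F_0$ and $F'(\cdot;a)\to F_0'$ uniformly on $[\delta,2]$ as $a\to0$. This in particular gives $F(2;a)>0$ for $a$ small, hence $F(\cdot;a)>0$ on $[0,2]$ (being positive and decreasing up to its first zero, which therefore exceeds $2$, by the rescaled form of Lemma~\ref{lem41}(a)), so that $R(a)>2a^{-\theta}$ and $\bar r_a:=a^{-\theta}\in(0,R(a))\subseteq(0,R_1(a))$.

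It remains to check that $J(\bar r_a;a)<0$ for $a$ small. With $g:=-|f'|^{p-2}f'=|f'|^{p-1}$ and using $g'/g=f/g-1-(N-1)/r$ on $(0,R_1(a))$, the convergences above yield, as $a\to0$, $f(\bar r_a;a)/g(\bar r_a;a)=F(1;a)/|F'(1;a)|^{p-1}\to F_0(1)/|F_0'(1)|^{p-1}=1$, together with $(N-1)/\bar r_a\to0$ and $g(\bar r_a;a)=a\,|F'(1;a)|^{p-1}\to0$; consequently $g'(\bar r_a;a)/g(\bar r_a;a)\to0$ and $g(\bar r_a;a)^{(2-p)/(p-1)}\to0$. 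Plugging these limits into the representation \eqref{J11} of $J$ and using $\beta/\alpha\to2(p-1)/(3p-2)$, $\gamma/(2\alpha)\to-(2-p)(p-1)/(3p-2)^2$ as $r\to\infty$ (from \eqref{J7}, \eqref{J8}, \eqref{J12}) and $\alpha(\bar r_a)g(\bar r_a;a)^2>0$, we obtain
\begin{equation*}
\lim_{a\to0}\ \frac{J(\bar r_a;a)}{\alpha(\bar r_a)\,g(\bar r_a;a)^2}=-\frac{(2-p)(p-1)}{(3p-2)^2}<0\ ,
\end{equation*}
so $J(\bar r_a;a)<0$ for all sufficiently small $a$. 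By Lemma~\ref{lem49} this forces $a\in C$, which proves $(0,a_0)\subseteq C$ for some $a_0>0$ and, in particular, $C\neq\emptyset$.

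The main obstacle is the singular perturbation step. Because the small parameter $a^{\theta}$ multiplies the degenerate $p$-Laplacian principal part, the limit $F(\cdot;a)\to F_0$ is not a consequence of classical continuous dependence on parameters; it must be established through $a$-uniform a priori bounds on $F$ and $F'$ on compact subsets of $(0,\infty)$ --- for instance via the rescaled energy $\tfrac{a^{\theta}(p-1)}{p}|F'|^p+\tfrac12F^2$, which is non-increasing, together with the first-order equation satisfied by $g$ --- followed by a passage to the limit that accounts for the boundary layer of $F'$ near $\rho=0$.
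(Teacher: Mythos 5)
Your openness argument is correct and is essentially the paper's: pick $\bar r$ with $J(\bar r;a)<0$, propagate it to nearby $a'$ by continuous dependence, and invoke Lemma~\ref{lem49}. Your computation of the sign of $J(\bar r_a;a)$ from \eqref{J11}--\eqref{J12}, \emph{granted} the convergence $F(\cdot;a)\to F_0$, is also correct (the ratio $f/g\to 1$ at $\rho=1$ does force the bracket in \eqref{J11} to converge to $-(2-p)(p-1)/(3p-2)^2<0$). The problem is that the step you yourself identify as ``the core technical step'' --- the uniform convergence $F(\cdot;a)\to F_0$, $F'(\cdot;a)\to F_0'$ on $[\delta,2]$ --- is never proved; it is only announced, together with a sketch of what tools might yield it. This is a genuine gap, and not a routine one: in your rescaling the small parameter $a^{\theta}$ multiplies the degenerate second-order principal part, and the data are incompatible with the reduced equation ($F'(0;a)=0$ while $F_0'(0)=-1$), so there is a boundary layer near $\rho=0$ and no off-the-shelf continuous-dependence theorem applies. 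Until that singular perturbation limit is established (with enough uniformity to also control $F'(1;a)$, which enters your computation of $f/g$ and of $g(\bar r_a;a)$), the non-emptiness of $C$ is not proved.

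The paper avoids this difficulty entirely by rescaling \emph{only the amplitude}: it sets $z(r;a):=g(r;a)/a$ and $Z(r;a):=J(r;a)/a^2$, so that $z$ solves
\begin{equation*}
z''+\Big(1+\frac{N-1}{r}\Big)z'-\frac{N-1}{r^2}\,z+a^{(2-p)/(p-1)}z^{1/(p-1)}=0\ ,\qquad z(0)=0\ ,\quad z'(0)=\frac{1}{N}\ ,
\end{equation*}
in which the small parameter $a^{(2-p)/(p-1)}$ multiplies only a zeroth-order term and the initial data are independent of $a$. The limit problem is then a \emph{regular} perturbation with an explicitly solvable linear limit equation, $z_0(r)=\varrho(r)^{-1}\int_0^r\varrho(s)\,ds\to 1$ as $r\to\infty$; the corresponding limit $Z_0$ of the rescaled Pohozaev functional (in which the $\delta$-term drops out because $g^{p/(p-1)}=a^{p/(p-1)}z^{p/(p-1)}=o(a^2)$) satisfies $Z_0(r)\to-\infty$, whence $J(R_0;a)=a^2Z(R_0;a)<0$ for some fixed $R_0$ and all small $a$, and Lemma~\ref{lem49} concludes. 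If you want to salvage your space--amplitude rescaling you must actually carry out the boundary-layer analysis; otherwise the cleaner fix is to switch to the amplitude-only rescaling, where classical continuous dependence on compact sets suffices.
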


\begin{proof}
We prove first that $C$ is an open set. Consider $\bar{a}\in C$. We
deduce from Corollary~\ref{corrmk} that there exists $R>r_G$ such
that $J(R;\bar{a})<-2$. By continuous dependence, there exists
$\varepsilon>0$ such that
$$
R_1(a)>2R \;\;\text{ and }\;\; J(R;a)<-1 \quad {\rm for} \ a\in(\bar{a}-\varepsilon , \bar{a}
+\varepsilon),
$$
recalling that $R_1(a)$ is defined in \eqref{r1a}. Then Lemma~\ref{lem49} guarantees that
$(\bar{a}-\varepsilon,\bar{a}+\varepsilon)\subset C$.

Proving that $C$ is non-empty is more involved but also relies on a continuous dependence argument. For $a>0$ and $r\in (0,R_1(a))$, define
$$
z(r;a):=\frac{1}{a}g(r;a), \quad Z(r;a):=\frac{1}{a^2}J(r;a)\ .
$$
 It follows from \eqref{ODEg} that
\begin{equation}\label{ODEz}
z''(r;a)+\left(1+\frac{N-1}{r}\right)z'(r;a)-\frac{N-1}{r^2}z(r;a)+a^{(2-p)/(p-1)}z(r;a)^{1/(p-1)}=0
\end{equation}
for $r\in (0,R_1(a))$, with initial conditions
$$
z(0;a)=0, \quad z'(0;a)=\frac{1}{N}.
$$
Since $p\in (1,2)$ the nonlinear term in \eqref{ODEz} vanishes in the limit $a\to 0$ and we introduce the solution $z_0$ to the limit problem:
\begin{eqnarray}
&&z_0''(r)+\left(1+\frac{N-1}{r}\right)z_0'(r)-\frac{N-1}{r^2}z_0(r)=0, \quad r>0, \label{ODEz0}\\
&&z_0(0)=0, \quad z_0'(0)=\frac{1}{N}.\label{ICz0}
\end{eqnarray}
In fact,
$$
z_0(r)=\frac{1}{\varrho(r)}\int_0^r\varrho(s)\,ds\ , \qquad r>0\ .
$$
Indeed, by a simple calculation,
$$
z_0'(r)=1-\left(1+\frac{N-1}{r}\right)z_0(r), \quad
z_0''(r)=-\left(1+\frac{N-1}{r}\right)z_0'(r)+\frac{N-1}{r^2}z_0(r),
$$
whence \eqref{ODEz0} is fulfilled, while l'Hospital rule ensures
that
$$
\lim\limits_{r\to0}\frac{z_0(r)}{r}=\lim\limits_{r\to0}\frac{\varrho(r)}{r\varrho'(r)+\varrho(r)}=\lim\limits_{r\to0}\frac{1}{r+N-1+1}=\frac{1}{N},
$$
leading readily to \eqref{ICz0}. In addition, by \eqref{asymptrho} and the l'Hospital rule,
\begin{equation}
\lim\limits_{r\to\infty}z_0(r)=\lim\limits_{r\to\infty}\frac{\varrho(r)}{\varrho'(r)}=1\ . \label{Y1}
\end{equation}
Since $z(\cdot;a)\to z_0$ as $a\to 0$ uniformly on compact subsets
of $(0,\infty)$, we infer from that convergence, the positivity of
$z_0$ in $(0,\infty)$, and \eqref{Y1} that
\begin{equation}
\lim_{a\to\infty} R_1(a) = \infty\ . \label{Y2}
\end{equation}

We also introduce the (formal) limit $Z_0$ of the Pohozaev functional $Z(\cdot;a)$ as $a\to 0$,
\begin{equation}\label{J13}
\begin{split}
Z_0(r)&:=\frac{1}{2}\alpha(r)z_0'(r)^2+\beta(r)z_0'(r)z_0(r)+\frac{1}{2}\gamma(r)z_0(r)^2\\
&=\alpha(r)z_0(r)^2\left[\frac{\gamma(r)}{2\alpha(r)}+\frac{\beta(r)}{\alpha(r)}\frac{z_0'(r)}{z_0(r)}+\frac{1}{2}\left(\frac{z_0'(r)}{z_0(r)}\right)^2\right],
\end{split}
\end{equation}
with $\alpha$, $\beta$, and $\gamma$ given by \eqref{J7} and \eqref{J8} and claim that
\begin{equation}\label{J14}
\lim\limits_{r\to\infty} Z_0(r)=-\infty.
\end{equation}
Indeed, since
$$
\frac{z_0'(r)}{z_0(r)}=\frac{1}{z_0(r)}-\left(1+\frac{N-1}{r}\right) \ ,
$$
we deduce from \eqref{Y1} that
$$
\lim\limits_{r\to\infty}\frac{z_0'(r)}{z_0(r)}=0.
$$
Thus, taking into account that
$$
\alpha(r)z_0^2(r)\sim\alpha(r)\to\infty \quad {\rm as} \ r\to\infty
$$
and recalling the limits in \eqref{J12}, we obtain \eqref{J14}. It
follows from \eqref{Y2} and \eqref{J14} that there exist $R_0>r_G$
and $a_0>0$ such that
$$
Z_0(R_0)<-2 \;\;\text{ and }\;\; R_1(a)>2R_0 \;\;\text{ for }\;\; a
\in (0,a_0)\ .
$$
Then $Z(\cdot;a)$ is well-defined on $(0,2R_0)$ for $a\in (0,a_0)$
and, since $z(\cdot;a)\to z_0$ in $C^1([r_G,2R_0])$ as $a\to 0$ by
continuous dependence, we find
$$
\lim\limits_{a\to 0} Z(R_0;a)=Z_0(R_0)<-2\ ,
$$
and we may assume (possibly taking a smaller value of $a_0$)  that
\begin{equation*}
Z(R_0;a) \le -1\ , \qquad a\in (0,a_0)\ .
\end{equation*}
Consequently, for $a\in (0,a_0)$,
\begin{equation*}
J(R_0;a) = a^2 Z(R_0;a) \le -a^2 < 0\ ,
\end{equation*}
and Lemma~\ref{lem49} readily entails that $a\in C$. We have thus proved that $(0,a_0)\subseteq C$.
\end{proof}

\subsection{Uniqueness of the fast decaying profile}\label{s4.4}

In this subsection we complete the proof of Theorem~\ref{th.uniq} and proceed as in the proofs of \cite[Lemma~3 \& Proposition~3]{ShWa16}. To this end, we need two preparatory technical results.

\begin{lemma}\label{lem410}
Let $0<a_1<a_2<\infty$ such that $a_i\in B\cup C$, $i=1,2$. We define the Wronskian $W=g'(\cdot;a_1) g(\cdot;a_2) - g(\cdot;a_1) g'(\cdot;a_2)$ of the solutions $g(\cdot;a_1)$ and $g(\cdot;a_2)$. Then
\begin{equation}\label{Wronsk}
W(r)=\int_0^r\frac{\varrho(s)}{\varrho(r)}\left(g(s;a_2)^{(2-p)(p-1)}-g(s;a_1)^{(2-p)/(p-1)}\right)g(s;a_1) g(s;a_2)\,ds
\end{equation}
for $r>0$.
\end{lemma}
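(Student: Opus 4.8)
The plan is to exploit that, when $a\in B\cup C$, one has $R_1(a)=\infty$, so that $g(\cdot;a)$ is positive on all of $(0,\infty)$ and, by \eqref{ODEg}, of class $C^2$ there. Writing $g_i:=g(\cdot;a_i)$ for $i=1,2$, both functions thus solve \eqref{ODEg} on $(0,\infty)$. First I would differentiate the Wronskian $W=g_1'g_2-g_1g_2'$, so that $W'=g_1''g_2-g_1g_2''$, and substitute for $g_1''$ and $g_2''$ from \eqref{ODEg}. The second-order terms then organise themselves: the terms carrying the factor $1+\frac{N-1}{r}$ combine into $-\left(1+\frac{N-1}{r}\right)W$, the two $\frac{N-1}{r^2}g_1g_2$ contributions cancel, and, using the identity $\frac{1}{p-1}-1=\frac{2-p}{p-1}$ to rewrite $g_i^{1/(p-1)}=g_i^{(2-p)/(p-1)}g_i$, the nonlinear terms leave the first-order linear ordinary differential equation
\begin{equation*}
W'(r)+\left(1+\frac{N-1}{r}\right)W(r)=\left(g_2(r)^{(2-p)/(p-1)}-g_1(r)^{(2-p)/(p-1)}\right)g_1(r)g_2(r)\ ,\qquad r>0\ .
\end{equation*}
(We note in passing that the exponent of $g(s;a_2)$ in \eqref{Wronsk} should read $(2-p)/(p-1)$.)

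Next I would observe, from \eqref{fctrho} and \eqref{asymptrho}, that $1+\frac{N-1}{r}=\varrho'(r)/\varrho(r)$, so that $\varrho$ is an integrating factor and the previous identity is equivalent to
\begin{equation*}
\frac{d}{dr}\bigl(\varrho(r)W(r)\bigr)=\varrho(r)\left(g_2(r)^{(2-p)/(p-1)}-g_1(r)^{(2-p)/(p-1)}\right)g_1(r)g_2(r)\ ,\qquad r>0\ .
\end{equation*}
Since $g_1$ and $g_2$ are bounded on bounded subsets of $[0,\infty)$ and $\varrho(r)\to 0$ as $r\to 0$ (here the assumption $N\ge 2$ is used), the right-hand side is bounded near $r=0$; hence $\varrho W\in C^1((0,\infty))$ extends continuously to $[0,\infty)$. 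Moreover, $g_i(r)\to 0$ and $g_i'(r)\to a_i/N$ as $r\to 0$ by \eqref{ICg}, so that $W(r)\to 0$ and therefore $\varrho(r)W(r)\to 0$ as $r\to 0$. Integrating the displayed identity over $(0,r)$ and dividing by $\varrho(r)$ then yields \eqref{Wronsk}.

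The computation is entirely routine; the only points deserving a little care are the $C^2$-regularity of $g(\cdot;a_i)$ on $(0,\infty)$ — which is precisely what makes the hypothesis $a_i\in B\cup C$, equivalently $R_1(a_i)=\infty$ by \eqref{r1a}, necessary so that \eqref{ODEg} is available on all of $(0,\infty)$ — and the vanishing of the boundary term $\varrho(r)W(r)$ as $r\to 0$, for which the behaviour of $g(\cdot;a_i)$ at the origin recorded in \eqref{ICg} is needed.
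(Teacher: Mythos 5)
Your proof is correct and follows essentially the same route as the paper: differentiate $W$, substitute from \eqref{ODEg} so that $W'+\frac{\varrho'}{\varrho}W$ equals the nonlinear difference term, and integrate $(\varrho W)'$ from $0$ using $(\varrho W)(0)=0$. You are also right that the exponent on $g(s;a_2)$ in \eqref{Wronsk} is a typo for $(2-p)/(p-1)$.
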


\begin{proof}
We set $g_i:=g(\cdot;a_i)$, $i=1,2$.
By \eqref{ODEg} and direct calculations we get
\begin{equation*}
\begin{split}
W'(r)&=g_2(r)\left[-\frac{\varrho'(r)}{\varrho(r)}g_1'(r)-g_1(r)^{1/(p-1)}+\frac{N-1}{r^2}g_1(r)\right]\\
& \qquad - g_1(r)\left[-\frac{\varrho'(r)}{\varrho(r)}g_2'(r)-g_2(r)^{1/(p-1)}+\frac{N-1}{r^2}g_2(r)\right]\\
&=-\frac{\varrho'(r)}{\varrho(r)}W(r)-g_1(r)^{1/(p-1)}g_2(r)+g_2(r)^{1/(p-1)}g_1(r),
\end{split}
\end{equation*}
hence
\begin{align*}
(W\varrho)'(r) &= \varrho(r)\left[W'(r)+\frac{\varrho'(r)}{\varrho(r)}W(r)\right] \\
& =\varrho(r)g_1(r)g_2(r)\left[g_2(r)^{(2-p)/(p-1)}-g_1(r)^{(2-p)/(p-1)}\right].
\end{align*}
Since $(W\varrho)(0)=W(0)=0$, \eqref{Wronsk} follows by direct
integration.
\end{proof}

\begin{lemma}\label{lem411}
Let $0<a_1<a_2<\infty$ such that $a_i\in B\cup C$, $i=1,2$. If $J(r;a_1)\geq0$ for $r\in(0,\infty)$, then
\begin{equation}\label{compar}
\frac{d}{dr} \left(\frac{g(r;a_2)}{g(r;a_1)}\right)<0, \quad r\in(0,\infty).
\end{equation}
\end{lemma}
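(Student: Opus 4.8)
The plan is to reduce the claim to the positivity on $(0,\infty)$ of the Wronskian $W$ of Lemma~\ref{lem410}, and to prove this positivity by contradiction with the help of the Pohozaev functional $J$. Write $g_i:=g(\cdot;a_i)$ and $q:=g_2/g_1$; since $a_i\in B\cup C$ we have $R_1(a_i)=\infty$, hence $g_i>0$ on $(0,\infty)$, so that $q$ is well defined and of class $C^2$ on $(0,\infty)$ and $q'=-W/g_1^2$ with $W=g_1'g_2-g_1g_2'$. Thus \eqref{compar} is equivalent to $W>0$ on $(0,\infty)$. First, $q'<0$ (that is, $W>0$) on a right-neighbourhood of $0$: from \eqref{ICg} one has $g_i(s)\sim a_is/N$ as $s\to0$, so $g_2>g_1$ near $0$ and the integrand in \eqref{Wronsk} is positive there, whence $W(r)>0$ for $r>0$ small. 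Finally, the hypothesis $J(\cdot;a_1)\ge0$ rules out $a_1\in C$ by Lemma~\ref{lem49}, so $a_1\in B$ and $J(r;a_1)>0$ for all $r>0$ by Lemma~\ref{lem48}.

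The key object is the auxiliary function
\[
\Phi:=q^2\,J(\cdot;a_1)-J(\cdot;a_2)\quad\text{on }[0,\infty)\ ,
\]
which satisfies $\Phi(0)=0$ since $J(0;a_i)=0$ (Proposition~\ref{propJ}) and $q(r)\to a_2/a_1$ as $r\to0$. Differentiating on $(0,\infty)$ and using \eqref{J6}, the crucial point being that the function $G$ there does \emph{not} depend on $a$, the two $J'$-terms combine into $G\,(q^2g_1^2-g_2^2)=0$ because $q^2g_1^2=g_2^2$, so that
\[
\Phi'(r)=2q(r)q'(r)\,J(r;a_1)\ ,\qquad r\in(0,\infty)\ .
\]

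Assume now, for contradiction, that $q$ is not strictly decreasing on $(0,\infty)$. Since $q'<0$ near $0$, the set $\{r>0:q'(r)=0\}$ is non-empty; let $\bar r$ be its infimum, so that $\bar r\in(0,\infty)$, $q'<0$ on $(0,\bar r)$ and $q'(\bar r)=0$. On the one hand, integrating $\Phi'=2qq'\,J(\cdot;a_1)$ over $(0,\bar r)$, where $q>0$, $q'<0$ and $J(\cdot;a_1)>0$, gives $\Phi(\bar r)<0$. On the other hand, $q'$ reaches $0$ at $\bar r$ coming from negative values, so $q''(\bar r)\ge0$; substituting $g_2=q\,g_1$ into \eqref{ODEg} and using that $g_1$ solves the same equation, one finds at the point $\bar r$ (where $q'=0$) the identity
\[
q''(\bar r)=g_1(\bar r)^{(2-p)/(p-1)}\bigl(q(\bar r)-q(\bar r)^{1/(p-1)}\bigr)\ ,
\]
which, since $1/(p-1)>1$ and $q(\bar r)>0$, forces $q(\bar r)\le1$, i.e. $g_2(\bar r)\le g_1(\bar r)$. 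As $q'(\bar r)=0$ also gives $g_2'(\bar r)/g_2(\bar r)=g_1'(\bar r)/g_1(\bar r)$, inserting this common value into the factorised form \eqref{J11} of $J$ and using $q(\bar r)^2g_1(\bar r)^2=g_2(\bar r)^2$ yields
\[
\Phi(\bar r)=\frac{p-1}{p}\,\alpha(\bar r)\,g_2(\bar r)^2\bigl(g_1(\bar r)^{(2-p)/(p-1)}-g_2(\bar r)^{(2-p)/(p-1)}\bigr)\ge0\ ,
\]
since $(2-p)/(p-1)>0$ and $0<g_2(\bar r)\le g_1(\bar r)$. This contradicts $\Phi(\bar r)<0$, so $q$ is strictly decreasing on $(0,\infty)$, which is \eqref{compar}.

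The only non-routine ingredient is the choice of the combination $\Phi=q^2J(\cdot;a_1)-J(\cdot;a_2)$; once it is found, everything rests on two structural facts already at our disposal, namely that $G$ in \eqref{J6} is independent of $a$ (so that $\Phi'$ loses its $J'$-part, leaving a term whose sign is that of $q'$) and that \eqref{ODEg} is literally the same equation for $g_1$ and $g_2$ (so that $q''$ at a critical point of $q$ is governed by the sign of $q-q^{1/(p-1)}$, forcing $g_2(\bar r)\le g_1(\bar r)$ and hence the reverse sign for $\Phi(\bar r)$).
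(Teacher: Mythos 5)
Your proof is correct and follows the same core strategy as the paper's: both hinge on the auxiliary function $\Phi=q^2J(\cdot;a_1)-J(\cdot;a_2)$ (called $X$ in the paper) with $q=g(\cdot;a_2)/g(\cdot;a_1)$, the identity $\Phi'=2qq'J(\cdot;a_1)$ coming from the $a$-independence of $G$ in \eqref{J6}, and a sign contradiction at the first critical point of $q$. The one place where you genuinely diverge is the proof that $q\le 1$ at that point: the paper obtains the strict inequality $q(r_*)<1$ from the integral representation \eqref{Wronsk} of the Wronskian (if $q\ge 1$ held on $(0,r_*)$, the integrand would force $q'(r_*)<0$), whereas you derive $q(\bar r)\le 1$ purely locally, from the second-derivative test $q''(\bar r)\ge 0$ combined with the identity $q''(\bar r)=g_1(\bar r)^{(2-p)/(p-1)}\bigl(q(\bar r)-q(\bar r)^{1/(p-1)}\bigr)$ obtained by substituting $g_2=qg_1$ into \eqref{ODEg}; I checked this identity and it is correct. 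Because your inequality is non-strict, you need the strict sign $\Phi(\bar r)<0$ on the other side, and you correctly secure it by first upgrading the hypothesis $J(\cdot;a_1)\ge 0$ to $J(\cdot;a_1)>0$ via Lemmas~\ref{lem49} and~\ref{lem48} (so that $a_1\in B$); the paper distributes the strictness the other way, taking $X(r_*)\le 0$ and strict positivity of $X(r_*)$ from $q(r_*)<1$. Both bookkeepings are sound, and your evaluation of $\Phi(\bar r)$ through the factorised form \eqref{J11}, using that $g_1'/g_1=g_2'/g_2$ at $\bar r$, reproduces exactly the paper's formula for $X(r_*)$ with less computation.
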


\begin{proof}
We set $g_i:=g(\cdot;a_i)$ and $J_i := J(\cdot;a_i)$, $i=1,2$, where $J(\cdot;a)$ is defined in \eqref{J1}. Introducing $q:=g_2/g_1$, we notice that
\begin{equation}
\lim\limits_{r\to0}q(r)=\lim\limits_{r\to0}\frac{g_2(r)}{g_1(r)}=\lim\limits_{r\to0}\frac{g_2'(r)}{g_1'(r)}=\frac{a_2}{a_1}>1\ . \label{Y5}
\end{equation}
Next, recalling that $W$ is defined in Lemma~\ref{lem410}, we obtain, for $r>0$,
\begin{eqnarray}
q'(r) & = & - \frac{W(r)}{g_1(r)^2} \nonumber\\
& =& \frac{1}{g_1^2(r)}\int_0^r\frac{\varrho(s)}{\varrho(r)}\left(g_1(s)^{(2-p)(p-1)}-g_2(s)^{(2-p)/(p-1)}\right)g_1(s)g_2(s)\,ds. \label{interm22}
\end{eqnarray}
Since $a_1<a_2$, it follows from \eqref{ICg} that $0<g_1(r)<g_2(r)$
for $r\in(0,\varepsilon)$ for some $\varepsilon>0$ sufficiently small and thus
$q'<0$ in $(0,\varepsilon)$ by \eqref{interm22}. Define then
$$
r_*:=\inf\{r>0: q'(r)>0\} > \varepsilon >0\ ,
$$
and assume for contradiction that $r_*<\infty$. Then $q'(r)<0$ for
$r\in(0,r_*)$ and $q'(r_*)=0$. In particular, $q$ is decreasing in
$(0,r_*)$. We further claim that
\begin{equation}
q(r_*)<1\ . \label{Y6}
\end{equation}
Indeed, if this is not true, there
holds $q(r)>1$ for $r\in(0,r_*)$, whence $g_1(r)<g_2(r)$ for
$r\in(0,r_*)$ and thus $q'(r_*)<0$ by \eqref{interm22}, which is a contradiction with the definition of $r_*$.

We next introduce
$$
X(r):=q^2(r)J_1(r)-J_2(r), \qquad r>0\ .
$$
Thanks to the definition of $q$, it follows from \eqref{J6} that, for $r\in(0,r_*)$,
\begin{equation}\label{X1}
X'(r)= 2q(r)q'(r)J_1(r)<0.
\end{equation}
In addition, we recall that $J_i(0)=0$ by Proposition~\ref{propJ} which, together with \eqref{Y5}, leads us to \begin{equation}\label{X2}
\lim\limits_{r\to0}X(r)=0.
\end{equation}
Expanding now $X$ in terms of
$\alpha$, $\beta$, $\gamma$, and $\delta$, we find
\begin{equation*}
\begin{split}
X&=\frac{\alpha}{2}\left[ q^2 (g_1')^2-(g_2')^2 \right]+ \beta\left(q^2g_1g_1'-g_2g_2'\right) +\frac{\gamma}{2} \left(q^2g_1^2-g_2^2\right)\\
&+\frac{p-1}{p}\delta \left(q^2g_1^{p/(p-1)}-g_2^{p/(p-1)}\right)\\
&=\frac{\alpha}{2g_1^2}\left[ g_2^2 (g_1')^2-g_1^2 (g_2')^2\right]+\frac{\beta g_2}{g_1}\left(g_2g_1'-g_1g_2'\right)\\
&+\frac{p-1}{p}\delta g_2^2\left(g_1^{(2-p)/(p-1)}-g_2^{(2-p)/(p-1)}\right)\\
&=\left[\frac{\alpha}{2g_1^2}\left(g_2g_1'+g_1g_2'\right)+\frac{\beta g_2}{g_1}\right]W\\
&+\frac{p-1}{p}\delta g_2^2\left(g_1^{(2-p)/(p-1)}-g_2^{(2-p)/(p-1)}\right).
\end{split}
\end{equation*}
Evaluating $X$ at $r=r_*$ and taking into account that
$W(r_*)=-q'(r_*)/g_1(r_*)^2=0$, we conclude that
\begin{equation*}
\begin{split}
X(r_*)&=\frac{p-1}{p}\alpha(r_*)g_2(r_*)^2\left(g_1(r_*)^{(2-p)/(p-1)}-g_2(r_*)^{(2-p)/(p-1)}\right)\\
&=\frac{p-1}{p}\alpha(r_*)g_2(r_*)^2g_1(r_*)^{(2-p)/(p-1)}\left(1-q(r_*)^{(2-p)/(p-1)}\right)>0,
\end{split}
\end{equation*}
since $q(r_*)<1$ by \eqref{Y6}. This contradicts
\eqref{X1} and \eqref{X2} and thus, $r_*=\infty$, completing the proof.
\end{proof}

With all these previous steps, we are now in a position to end up the proof of Theorem~\ref{th.uniq}.

\begin{proof}[Proof of Theorem~\ref{th.uniq}]
We infer from Corollary~\ref{corA} and Lemma~\ref{lemaC} that $B$ is
a non-empty set, as $A$ and $C$ are both open and non-empty. We next
show that $B$ is in fact a singleton. Indeed, assume for
contradiction that there exist $a_1$, $a_2\in B$ such that
$a_1<a_2$. Setting $g_i=g(\cdot;a_i)$ and $J_i:= J(\cdot;a_i)$,
$i=1,2$, as well as $q=g_2/g_1$ and $X=q^2J_1-J_2$ as before, it
follows from Lemma~\ref{lem48} that $J_i(r)>0$ for $r\in (0,\infty)$
and $i=1,2$, and
\begin{equation}
\lim\limits_{r\to\infty}J_1(r)=\lim\limits_{r\to\infty}J_2(r)=0. \label{Y10}
\end{equation}
Thanks to the just mentioned positivity of $J_1$, we apply Lemma~\ref{lem411} and obtain that $q'<0$ in
$(0,\infty)$. Consequently
\begin{equation}
X'(r)=2q(r)q'(r)J_1(r)<0, \quad r\in(0,\infty). \label{Y11}
\end{equation}
Since both $a_1$ and $a_2$ belong to $B$, we also know from \eqref{limitw} that there exists $l(a_i)\in(0,\infty)$ such that  $g_i(r)\sim l(a_i)\varrho(r)^{-1}$ as $r\to\infty$, $i=1,2$, so that $q(r)\to l(a_2)/l(a_1)$ as $r\to\infty$. Combining this fact with \eqref{Y10} gives
$$
\lim\limits_{r\to\infty} X(r)=0.
$$
Recalling \eqref{Y11} we deduce that $X$ is positive and decreasing on $(0,\infty)$, which contradicts \eqref{X2}.

Therefore $B$ is a singleton and we denote the only element in $B$
by $a_*$ . We then infer from Lemma~\ref{lem44}, Corollary~\ref{corA}, and Lemma~\ref{lemaC} that $C=(0,a_*)$, $B=\{a_*\}$, and
$A=(a_*,\infty)$. The corresponding asymptotic behavior for $f(r;a)$
as $r\to\infty$ when $a\in C$ follows now readily from
Corollary~\ref{cor.2}, \eqref{f8}, and the definition of $w$.
Finally, for $a=a_*$, according to \eqref{limitw}, $g(r;a_*)\sim l(a_*)/\varrho(r)$ as $r\to\infty$. The behavior of $f(r;a_*)$
as $r\to\infty$ claimed in \eqref{decay}, with
$$
c_*=(p-1)l(a_*)^{1/(p-1)},
$$
is then obtained by integration and using Lemma~\ref{lem41}~(d).
\end{proof}

\begin{remark}\label{rmkvar}
The existence of profiles $f(\cdot;a)$ with $a\in B$ may also be established with the help of the  variational structure of \eqref{eq1} used in Section~\ref{sec.s3}.  More precisely, such a profile $f$ can be found as a minimizer of the constrained problem:
$$
\inf\left\{\int_0^{\infty} \varrho(r)\frac{|f'(r)|^p}{p}\,dr:
\int_0^{\infty} \varrho(r)\frac{|f(r)|^2}{2}\,dr=1\right\},
$$
but this approach gives no clue about the uniqueness issue.
\end{remark}

\section*{Acknowledgments} R. G. I. is supported by the
Severo Ochoa Excellence project SEV-2015-0554 (MINECO, Spain). Part
of the work has been completed while R. G. I. was enjoying a
one-month ``Invited Professor" stay at the Institut de Mathématiques
de Toulouse, and he thanks for the hospitality and support.

\bibliographystyle{acm}
\bibliography{CriticalExtinction}

\end{document}